\newcommand{\qed}{$\;\;\;\Box$}
\newenvironment{proof}{\par\smallbreak{\sl Proof.~}}
{\unskip\nobreak\hfill \qed \par\medbreak}
\newenvironment{proofthm}{\par\smallbreak{\sl Proof of  Theorem~}}
{\unskip\nobreak\hfill \qed \par\medbreak}
\newcounter{claim}
\renewcommand{\theclaim}{\arabic{claim}}
{\par\medskip\par}
\newcommand{\hide}[1]{}
\newcommand{\D}{{\cal D}}
\newcommand{\N}{{\mathbb N}}
\newcommand{\R}{{\mathbb R}}
\newcommand{\M}{{\mathbb M}}
\newcommand{\A}{{\cal A}}
\newcommand{\LL}{{\cal L}}
\newcommand{\beq}{\begin{equation}}
\newcommand{\ee}{\end{equation}}
\renewcommand{\d}{\partial}
\newtheorem{thm}{Theorem}[section]
\newtheorem{lemma}[thm]{Lemma}
\newtheorem{defn}[thm]{Definition}
\newtheorem{ex}[thm]{Example}
\newcommand{\al}{\alpha}
\newcommand{\be}{\beta}
\newcommand{\ga}{\gamma}
\newcommand{\de}{\delta}
\newcommand{\eps}{\varepsilon}
\newcommand{\vphi}{\varphi}
\newcommand{\la}{\lambda}
\newcommand{\om}{\omega}
\newcommand{\io}{\iota}
\newcommand{\reff}[1]{(\ref{#1})}
\newcommand{\diag}{\mathop{\rm diag}\nolimits}
\newcommand{\tr}{\mathop{\rm tr}}
\renewcommand{\Im}{\mathop{\mathrm{Im}}\nolimits}
\date{}
\title{
Classical Bounded and Almost Periodic Solutions
to  \\ Quasilinear First-Order Hyperbolic Systems in a Strip
}
\newcounter{thesame}
\author{
I.~Kmit
\thanks{Institute of Mathematics, Humboldt University of Berlin. On leave from the
Institute for Applied Problems of Mechanics and Mathematics,
Ukrainian National Academy of Sciences. {\small   E-mail:
{\tt kmit@math.hu-berlin.de}}}
\ \ \ 
L.~Recke
\thanks{Institute of Mathematics, Humboldt University of Berlin. {\small   E-mail:
{\tt recke@math.hu-berlin.de}}}
\ \ \ 
V.~Tkachenko
\thanks{Institute of Mathematics, National Academy of Sciences of Ukraine. {\small   E-mail:
{\tt vitk@imath.kiev.ua}}}
}
\begin{document}

\maketitle

\begin{abstract}
\noindent
We consider   boundary value problems
for quasilinear first-order one-dimensional hyperbolic systems in a strip.
The  boundary conditions are supposed to be of a smoothing type, in the sense that
the  $L^2$-generalized solutions  to  the initial-boundary value  problems  become
eventually $C^2$-smooth for any initial $L^2$-data.
We investigate  small  global   classical solutions and
 obtain the existence and uniqueness result under the condition that the evolution family
generated by the linearized problem has  exponential dichotomy on $\R$. 
We prove
 that the dichotomy survives  under small perturbations in the leading coefficients  of the
hyperbolic system.
Assuming that  the coefficients of the hyperbolic system are almost periodic,  we prove that
the bounded solution is almost periodic also.
\end{abstract}

\emph{Key words:} 
 quasilinear first-order hyperbolic systems,
smoothing boundary conditions, exponential dichotomy, robustness,
bounded classical solutions, almost periodic solutions


\section{Introduction}\label{sec:intr}
\renewcommand{\theequation}{{\thesection}.\arabic{equation}}
\setcounter{equation}{0}

\subsection{Problem setting and main result} 
\renewcommand{\theequation}{{\thesection}.\arabic{equation}}
\setcounter{equation}{0}

We consider
 first-order quasilinear hyperbolic systems   of the following type
\begin{equation}\label{eq:1}
\partial_t u + A(x,t,u)\partial_x u + B(x,t, u)u = f(x,t),\quad x\in (0,1),
\end{equation}
subjected to the (nonlocal) reflection boundary conditions
\beq\label{eq:3}
\begin{array}{l}
\displaystyle
u_j(0,t) = \sum\limits_{k=m+1}^np_{jk}u_k(0,t)+\sum\limits_{k=1}^mp_{jk}u_k(1,t),\quad  1\le j\le m,
\nonumber\\
\displaystyle
u_j(1,t) = \sum\limits_{k=m+1}^np_{jk}u_k(0,t)+ \sum\limits_{k=1}^mp_{jk}u_k(1,t), \quad   m< j\le n,
\nonumber
\end{array}
\ee
where  $u=(u_1,\ldots,u_n)$ and $f = (f_1,\dots,f_n)$ are  vectors of real-valued functions,
$A=\diag(A_1,\dots,A_n)$ and $B = \{B_{jk}\}_{j,k=1}^n$ are matrices of real-valued functions,
$0\le m\le n$ are fixed integers, and
$p_{jk}$  are real constants.

The purpose of the paper is to establish conditions 
ensuring existence and
 uniqueness
of small  global   classical (continuously differentiable) solutions   to the problem  \reff{eq:1}--\reff{eq:3} in the strip
$$
\Pi = \{(x,t)\in\R^2\,:\,0<x<1\}.
$$
If the coefficients of the hyperbolic system are almost periodic (or periodic) in $t$, we prove that the 
bounded solution is
almost periodic (respectively, periodic) also.

Denote by
 $\|\cdot\|$ the Euclidian norm in $\R^n$.
Given a (closed) domain $\Omega\subset\R^l$,
let $BC(\Omega;\R^n)$
be the Banach space of
all bounded and continuous maps
$u:\Omega \to \R^n$
with the usual $\sup$-norm
$$
\|u\|_{BC(\Omega;\R^n)}=\sup\left\{\|u(z)\|\,:\,z\in\Omega\right\}.
$$
Similarly one can  introduce the space $BC^k(\Omega;\R^n), k =1,2$, of bounded and $k$-times 
continuously differentiable functions.
 \vspace{3mm}

Suppose that the coefficients of the system
\reff{eq:1} satisfy the following conditions.
\vskip2mm
{\bf(H1)} \ There exists $\de_0 > 0$  such that
\begin{itemize}
\item
for all  $j\le n$ and  $k\le n$ the coefficients $A_j(x,t,v)$ and $B_{jk}(x,t,v)$ have bounded and
continuous partial derivatives  up to the second order in $(x,t)\in\overline\Pi$ and in $v\in\R^n$
with $\|v\| \le \de_0$,
\item
there exists $\Lambda_0>0$ such that
\begin{equation}\label{eq:h1}
\begin{array}{ll}
\inf\left\{A_j(x,t,v)\,:\,(x,t)\in[0,1]\times\R, \|v\| \le \de_0, 1\le j\le m\right\}\ge \Lambda_0,\\ [2mm]
\sup\left\{A_j(x,t,v)\,:\,(x,t)\in[0,1]\times\R, \|v\| \le \de_0, m+1\le j\le n\right\}\le -\Lambda_0,\\ [2mm]
\inf\left\{|A_j(x,t,v)-A_k(x,t,v)|\,:\,(x,t)\in[0,1]\times\R, \|v\| \le \de_0, 1\le j\ne k\le n\right\}\ge \Lambda_0.
\end{array}
\end{equation}
\end{itemize}
\vskip-3mm

{\bf (H2)} \  For all  $j\le n$ the functions $f_j(x,t)$   have bounded and
continuous partial derivatives  up to the second order in  $(x,t)\in\overline\Pi$.
\vskip3mm
Along with the nonlinear system  (\ref{eq:1}), consider its linearized version at   $u=0$, namely
\begin{equation}\label{eq:1u}
\partial_tu  + a(x,t)\partial_x u + b(x,t) u = 0, \quad x\in(0,1),
\end{equation}
where $a(x,t)=A(x,t,0)$ and $b(x,t)=B(x,t,0)$. Supplement the system \reff{eq:1u}
with the boundary conditions (\ref{eq:3}) and the initial conditions
\begin{eqnarray}
u(x,s) = \varphi(x), \quad x\in[0,1],
\label{eq:in}
\end{eqnarray}
where $s\in\R$ is an arbitrary fixed initial time.

We will work with the evolution family generated by the problem (\ref{eq:1u}), (\ref{eq:3}), (\ref{eq:in})
and defined on $L^2\left((0,1);\R^n\right)$. To introduce the evolution family,
let us define the notion of  an
$L^2$-generalized  solution.

Let $
C_0^1([0,1];\R^n)
$ be the space of continuously differentiable functions on $[0,1]$ with compact support in $(0,1)$. It is evident that the functions from $
C_0^1([0,1];\R^n)
$ fulfill the zero-order and the first-order compatibility conditions between  \reff{eq:in} and \reff{eq:3}.
Hence,
due to  Theorem~\ref{km} in Section \ref{sec:auxiliarli}, if $\vphi\in C_0^1([0,1];\R^n)$,  then the problem  (\ref{eq:1u}), (\ref{eq:3}), (\ref{eq:in})
has a unique classical solution.

 \begin{defn}\label{L2}
Let $\vphi\in L^2((0,1);\R^n)$. A function $u\in C\left([s,\infty), L^2\left((0,1);\R^n\right)\right)$ is called
an {\rm $L^2$-generalized  solution} to the problem (\ref{eq:1u}), (\ref{eq:3}), (\ref{eq:in})
 if for any sequence 
$\vphi^l\in C_0^1([0,1];\R^n)$ with 
 $\vphi^l\to\vphi$ in $L^2\left((0,1);\R^n\right)$
 the sequence $u^l\in C^1\left([0,1]\times[s,\infty);\R^n\right)$
of  classical solutions to
(\ref{eq:1u}), (\ref{eq:3}), (\ref{eq:in}) with
 $\vphi$ replaced by $\vphi^l$   fulfills the convergence
$$
\|u(\cdot,\theta)-u^l(\cdot,\theta)\|_{L^2\left((0,1);\R^n\right)} \to 0 \mbox{ as } l\to\infty,
$$
uniformly in $\theta$ varying in the range $s\le\theta\le s+T$, for every $T>0$.
\end{defn}

As usual, by $\LL(X,Y)$ we denote the space of linear
bounded operators from $X$ into $Y$, and write $\LL(X)$ for $\LL(X,X)$.
Note that the assumption {\bf (H1)} (especially, \reff{eq:h1}) entails
that
\begin{equation}\label{eq:h11}
\begin{array}{ll}
\inf\left\{a_j(x,t)\,:\,(x,t)\in[0,1]\times\R,  1\le j\le m\right\}\ge \Lambda_0,\\ [2mm]
\sup\left\{a_j(x,t)\,:\,(x,t)\in[0,1]\times\R,  m+1\le j\le n\right\}\le -\Lambda_0,\\ [2mm]
\inf\left\{|a_j(x,t)-a_k(x,t)|\,:\,(x,t)\in[0,1]\times\R, 1\le j\ne k\le n\right\}\ge \Lambda_0.
\end{array}
\end{equation}

\begin{thm}\label{evol}\cite{KmLyul}
Suppose that the coefficients  $a$  and $b$ of the system (\ref{eq:1u})
 have bounded and
continuous partial derivatives  up to the first order in $(x,t)\in\overline\Pi$. If
the inequalities \reff{eq:h11} are fulfilled, then, given $s\in\R$ and $\vphi\in L^2\left((0,1);\R^n\right)$, there exists a unique
$L^2$-generalized  solution $u:\R^2\to\R^n$ to the problem
(\ref{eq:1u}), (\ref{eq:3}), (\ref{eq:in}). Moreover, the map
$$
\vphi\mapsto U(t,s)\vphi:=u(\cdot,t)
$$
from $L^2\left((0,1);\R^n\right)$ to itself
defines  a strongly continuous, exponentially bounded evolution family  $U(t,s)\in \LL\left(L^2\left((0,1);\R^n\right)\right)$, which means that
\begin{itemize}
\item
$U(t,t)=I$ and $U(t,s)=U(t,r)U(r,s)$  for all $t\ge r\ge s,$
\item
 the map $(t,s)\in \R^2\mapsto U(t,s)\vphi\in L^2\left((0,1);\R^n\right)$ is continuous for all $t\ge s$ and each
$\vphi\in L^2\left((0,1);\R^n\right)$,
\item
  there exist $K \ge 1$ and $\nu \in \R$ such that
\beq\label{k3}
\|U(t,s)\|_{\LL\left(L^2\left((0,1);\R^n\right)\right)}\le Ke^{\nu(t-s)} \mbox{ for all } t\ge s.
\ee
\end{itemize}
\end{thm}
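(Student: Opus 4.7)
The plan is to construct the evolution family $U(t,s)$ by a density argument: first solve the problem for smooth data, then establish an $L^2$ a priori estimate with constants independent of the datum, and finally extend the solution map by continuity from $C_0^1([0,1];\R^n)$ to all of $L^2((0,1);\R^n)$. Since Theorem~\ref{km} already delivers a classical solution $u^l\in C^1([0,1]\times[s,\infty);\R^n)$ whenever the datum $\vphi^l$ lies in $C_0^1$, the crux of the whole argument is the uniform bound
$$\|u^l(\cdot,t)\|_{L^2((0,1);\R^n)}\le K\,e^{\nu(t-s)}\,\|\vphi^l\|_{L^2((0,1);\R^n)},\quad t\ge s,$$
with $K\ge 1$ and $\nu\in\R$ depending only on the bounds in {\bf (H1)}, on $b$, and on the reflection matrix $(p_{jk})$.

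To derive this bound I would integrate along characteristics. For each $j$ let $\xi_j(\tau;x,t)$ solve $\partial_\tau\xi_j=a_j(\xi_j,\tau)$ with $\xi_j(t;x,t)=x$; by \reff{eq:h11} these characteristics are transversal and uniformly non-degenerate, so the curve through $(x,t)$ can be traced back either to the initial line $\tau=s$ or to a lateral face $x\in\{0,1\}$ in a uniformly Lipschitz way. Integration of the $j$-th scalar equation of \reff{eq:1u} along $\xi_j$ gives a Volterra-type representation of $u_j(x,t)$ in terms of an initial/boundary value and of $\int\sum_{k\ne j}b_{jk}u_k$ along the characteristic; feeding the reflection relations \reff{eq:3} into the lateral values closes the system. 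The $L^2$ bound itself I would obtain from a weighted energy of the form
$$E(t)=\int_0^1\sum_{j=1}^n w_j(x)\,|u_j^l(x,t)|^2\,dx,\qquad w_j(x)=e^{\kappa_j x},$$
choosing the signs and sizes of $\kappa_j$ so that, after differentiating in $t$ and integrating by parts in $x$, the boundary contribution at $x=0$ (resp.\ $x=1$) from the outgoing components is made to dominate the contribution of the incoming components produced by \reff{eq:3}; Gronwall's inequality applied to $E$ then yields the displayed estimate.

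With the a priori bound in hand, $\vphi\mapsto u(\cdot,t)$ is uniformly continuous from a dense subspace of $L^2((0,1);\R^n)$ into itself, hence extends uniquely to $U(t,s)\in\LL(L^2((0,1);\R^n))$, and interleaving two approximating sequences shows that the limit is independent of the approximation, which gives both existence and uniqueness in the sense of Definition~\ref{L2}. The identities $U(t,t)=I$ and $U(t,s)=U(t,r)U(r,s)$ transfer from the classical solutions to the $L^2$ limit because the classical solution starting at $s$ coincides at any intermediate $r$ with the classical solution restarted at $r$ from its own trace; the same a priori estimate, together with the $C^1$-continuity of $u^l$ in $(x,t)$, gives strong continuity of $(t,s)\mapsto U(t,s)\vphi$, and the exponential bound \reff{k3} is a direct restatement of the estimate. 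The main obstacle I anticipate is exactly this $L^2$ a priori estimate: the reflection conditions \reff{eq:3} are nonlocal in $x$, so the naive energy identity does not produce a sign-definite boundary term, and one must combine the sign and spectral-gap information in \reff{eq:h11} with the specific algebraic structure of $(p_{jk})$ to engineer weights $w_j$ for which the boundary contributions can be absorbed with constants $K,\nu$ that are independent of $\vphi$ and of the length $t-s$ of the time interval.
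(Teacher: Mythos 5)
Your approach is essentially the same as the paper's (and the cited reference's). Theorem~\ref{evol} is imported from \cite{KmLyul} without proof, but the paper's own Lemma~\ref{H1} uses precisely your weighted-energy device — substituting $u_j\mapsto\mu_j u_j$ with weights chosen so that the reflection boundary contribution becomes a nonnegative quadratic form in the outgoing traces, then Gronwall — and classical solvability from Theorem~\ref{km} together with density of $C_0^1$ in $L^2$ handles the extension to the evolution family, exactly as you outline.
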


We will consider boundary conditions  ensuring
that the regularity  of   solutions to the initial boundary value problem for the linearized system
increases in a finite time. In other words,
 we  assume that
the system  (\ref{eq:1u}), (\ref{eq:3}), (\ref{eq:in})   has a smoothing property of the following kind, see
 \cite{kmit,Km,KmLyul}.

\begin{defn}\label{defn:smoothing_abstr}
Let $Y\hookrightarrow Z$ be continuously
embedded Banach spaces and, for each $s$ and $t\ge s$,  $V(t,s)\in\LL(Z)$.
The two-parameter family
$\{V(t,s)\}_{t\ge s}$
is called {\rm  smoothing} from $Z$ to $Y$ if there is $T>0$ (smoothing time) such that
 $V(t,s)\in \LL(Z,Y)$ for all $t\ge s+T$.
\end{defn}

Now we introduce the following condition ensuring a smoothing property 
of the evolution family
generated by the problem (\ref{eq:1u}), (\ref{eq:3}), (\ref{eq:in}) (see Theorem \ref{lem:d} below).
\vskip3mm

${\bf (H3)}$\
 $p_{i_1i_2}p_{i_2i_3}\dots p_{i_{n}i_{n+1}}=0$ for all tuples
$\left(i_1,i_2,\dots,i_{n+1}\right)\in\left\{1,\dots,n\right\}^{n+1}$.

\begin{defn} \label{defn:dich}\cite{Latn,H}
An evolution family $\{U(t,s)\}_{t\ge s}$ on a Banach space $X$ is said to have an exponential dichotomy on
 $\R$  (with an exponent $\al > 0$ and a bound $M\ge 1$) if there exists a
projection-valued function $P: \R\to\LL(X)$
such that the function $t\mapsto P(t)x$ is continuous and bounded for each $x\in X$,
and for all $t \ge s$ the following hold:

(i) $U(t,s)P(s) = P(t)U(t,s)$;

(ii) $U(t,s)(I-P(s))$ is invertible as an operator from
 $\Im(I-P(s))$ to $\Im(I-P(t))$ with the inverse denoted by $U(s,t)$;

(iii) $\|U(t,s)P(s)\|_{\LL(X)} \le M e^{-\al(t-s)}$;

(iv) $\| U(s,t)(I-P(t))\|_{\LL(X)} \le M e^{-\al(t-s)}$.
\end{defn}

We are prepared to state the main result of the paper.

\begin{thm}\label{main}
Suppose that the assumptions $\bf(H1)$--$\bf(H3)$ are fulfilled. Moreover, suppose that
the
evolution family $\{U(t,s)\}_{t\ge s}$ in $L^2\left((0,1);\R^n\right)$ generated by the
linearized problem
(\ref{eq:1u}), (\ref{eq:3}) has an exponential dichotomy   on
 $\R$.
 Then the following is true:

$(\io)$ 
There exist $\eps>0$ and $\delta>0$ such that for all $f\in BC^2\left(\overline\Pi;\R^n\right)$
with $\|f\|_{BC^2\left(\Pi;\R^n\right)}\le\eps$  there exists a unique classical solution $u$ to the problem
(\ref{eq:1}), (\ref{eq:3}) such that $\|u\|_{BC^1\left(\Pi;\R^n\right)}\le\delta$.

$(\io\io)$ If  the coefficients $A(x,t,v)$,
$B(x,t,v)$, and $f(x,t)$ are Bohr almost periodic in $t$ uniformly in $x\in[0,1]$
and $v$ with $\|v\|\le\de_0$
(respectively, $T$-periodic in $t$),
then the bounded classical solution to
the  problem  (\ref{eq:1}), (\ref{eq:3}) is Bohr almost periodic in $t$ (respectively, $T$-periodic in $t$)
 as well.
\end{thm}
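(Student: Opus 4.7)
The strategy is to recast \reff{eq:1}, \reff{eq:3} as a fixed-point equation $u=\Phi(u)$ on a small ball in $BC^1(\overline\Pi;\R^n)$, where $\Phi(\tilde u)$ is the unique bounded classical solution of the frozen-coefficient linear problem
\[
\partial_t v+A(x,t,\tilde u(x,t))\partial_x v+B(x,t,\tilde u(x,t))v=f
\]
under the boundary conditions \reff{eq:3}. For $\tilde u$ small in $BC^1$, assumption (H1) ensures that the frozen coefficients still obey \reff{eq:h11}, so Theorem~\ref{evol} produces an $L^2$-evolution family $\tilde U(t,s)$. This family is a small perturbation of $U(t,s)$ in the norms governing hyperbolicity, so by the robustness theorem that the paper establishes separately (whose proof exploits (H3) crucially), $\tilde U(t,s)$ inherits exponential dichotomy with slightly weakened constants and a projection $\tilde P(s)$. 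The unique bounded $L^2$-generalized solution of the frozen inhomogeneous problem is then given by the standard dichotomy integral
\[
v(t)=\int_{-\infty}^t\tilde U(t,s)\tilde P(s)f(\cdot,s)\,ds-\int_t^\infty\tilde U(t,s)(I-\tilde P(s))f(\cdot,s)\,ds.
\]

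To upgrade $v$ to a classical $BC^1$-solution, I split each integral at distance equal to the smoothing time $T$ from $t$. On $|t-s|\ge T$, the smoothing (H3) sends $L^2$-valued integrands into $BC^2$, so the dichotomy estimates directly yield $BC^1$ control; the short-range remainder is handled by the method of characteristics using the reflection boundary conditions. Since $f\in BC^2$, this produces the bound $\|\Phi(\tilde u)\|_{BC^1}\le C\|f\|_{BC^2}$ uniformly for $\tilde u$ in the ball. For the contraction, $w=\Phi(\tilde u_1)-\Phi(\tilde u_2)$ satisfies the same kind of frozen problem with source
\[
[A(x,t,\tilde u_2)-A(x,t,\tilde u_1)]\partial_x\Phi(\tilde u_2)+[B(x,t,\tilde u_2)-B(x,t,\tilde u_1)]\Phi(\tilde u_2),
\]
whose size is $O(\|\tilde u_1-\tilde u_2\|_{BC^1}\|f\|_{BC^2})$ by (H1). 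Feeding this back through the preceding bound gives a Lipschitz constant of order $\|f\|_{BC^2}$; choosing $\eps$ and $\de$ with $\|f\|_{BC^2}\le\eps$ and $\de=2C\eps$ small enough, $\Phi$ maps the $\de$-ball into itself and is a contraction. Banach's fixed-point theorem yields the unique small classical solution, proving $(\io)$.

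For $(\io\io)$ I invoke Bochner's criterion: $u$ is almost periodic in $t$ iff the translate family $\{u(\cdot,\cdot+h):h\in\R\}$ is relatively compact. Given any sequence $h_k$, almost periodicity of $A,B,f$ lets one pass (on a subsequence) to limits $A^*,B^*,f^*$ that still satisfy (H1)--(H3) with the same constants; the dichotomy hypothesis transfers to the limit because, by robustness, it is uniform over the hull of the coefficients. Part $(\io)$ then produces a unique small bounded classical solution $u^*$ of the limit problem, and the continuous dependence of the contraction construction on the coefficients (routine from the Lipschitz estimates above) forces $u(\cdot,\cdot+h_k)\to u^*$ in $BC^1$ on bounded strips, giving the required compactness. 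The periodic case is immediate from uniqueness: $u(\cdot,\cdot+T)$ solves the same problem with the same smallness, so coincides with $u$. The main obstacle throughout is the regularity upgrade from the $L^2$-level dichotomy to pointwise classical solutions; bridging this via (H3), while simultaneously tracking how the dichotomy constants behave under perturbations of the quasilinear coefficient $A(x,t,u)$, is the technically most delicate point of the argument.
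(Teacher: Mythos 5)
Your part $(\io)$ argument follows essentially the same route as the paper: freeze the coefficient at a guess $\tilde u$, invoke the robustness theorem (which hinges on $\bf(H3)$) to inherit dichotomy for the frozen-coefficient evolution family, write the bounded solution via the dichotomy Green's function, upgrade regularity through the smoothing provided by $\bf(H3)$, and close by a contraction. The paper phrases this as an explicit iteration $u^{k+1}=\Phi(u^k)$ rather than abstract Banach fixed point, and for the regularity upgrade it prefers the representation $u^{1}(t)=U_0(t,t_0)u^{1}(t_0)+\int_{t_0}^t U_0(t,s)f(s)\,ds$ together with the quantitative smoothing estimate of Theorem~\ref{lem:2d}, rather than your splitting of the Green's-function integral at $|t-s|=T$; both deliver the same $BC^2$ control of $\Phi(\tilde u)$. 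The delicate step you flag (passing from $L^2$-dichotomy to a genuine classical solution) is treated in the paper by proving an equivalence between the $L^2$-generalized and abstract Cauchy settings (Theorem~\ref{distr-clas}) before applying the smoothing estimates; your sketch would need this same bridge, since the dichotomy integral a priori produces only an $L^2$-generalized object and one must check it is a classical solution of the abstract equation (hence of the PDE) before smoothing can be invoked.

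For part $(\io\io)$ you take a genuinely different path. You use Bochner's relative-compactness criterion and a hull argument, passing along a subsequence $h_k$ to limit coefficients $A^*,B^*,f^*$ and arguing that the dichotomy persists on the hull and the fixed point depends continuously on coefficients. The paper instead stays at the Bohr definition and proves by induction that each iterate $u^k$ is almost periodic: given an $\eps$-almost period $h$ of $a^k,b^k,f$ and their derivatives, the difference $z^{k+1}(\cdot,t)=u^{k+1}(\cdot,t)-u^{k+1}(\cdot,t+h)$ solves a frozen linear problem with $O(\eps)$ right-hand side, and the dichotomy plus smoothing estimate bounds it by $L_1\eps$ in $C^1$, uniformly in $t$; then the uniform limit of almost periodic functions is almost periodic. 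Your route is workable, but it silently uses a non-trivial fact: that exponential dichotomy of the evolution family passes to the hull of the almost periodic coefficient. As stated, the paper's robustness Theorem~\ref{robust} perturbs around a fixed $(a,b)$, and elements of the hull need not be small perturbations of $(a,b)$. One can recover what you need by applying the abstract Theorem~\ref{robust_gen} with base family $U_{h_k}(t,s)=U(t+h_k,s+h_k)$ (which trivially has the same dichotomy constants) and perturbation $U^*$, after establishing uniform convergence $U_{h_k}(s+T,s)\to U^*(s+T,s)$ in $\LL(L^2)$, but this extra step is not visible in your sketch. You also need the convergence $u(\cdot,\cdot+h_k)\to u^*$ to be uniform over all of $\R$ in $t$, not just on bounded time windows, for Bochner's criterion; your Lipschitz estimate does provide this, but the phrase ``on bounded strips'' should be replaced by uniform convergence on $\overline\Pi$. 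The paper's induction on iterates is more elementary and avoids both subtleties, at the cost of a somewhat longer verification that the nonlinear substitutions $A(x,t,u^k)$, $B(x,t,u^k)$ and their derivatives remain almost periodic.
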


The paper is organized as follows.
In Section \ref{sec:ex} we describe our approach and discuss the assumptions 
of our main Theorem \ref{main}.
In Section \ref{sec:robust} we obtain a general result about  robustness of the exponential dichotomy 
for the
linearized problem   under small perturbations of all coefficients in the 
hyperbolic system.
In Section~\ref{sec:abstr} we prove the equivalence of the PDE  setting 
(\ref{eq:1}), (\ref{eq:3}), 
(\ref{eq:in}) and the corresponding abstract setting.
The main result of the paper, stated in Theorem \ref{main}, is proved in Section \ref{sec:main}.

\section{Motivation and comments}\label{sec:ex}
\renewcommand{\theequation}{{\thesection}.\arabic{equation}}
\setcounter{equation}{0}

An overview of known existence-uniqueness results on global regular  solutions for first-order
one-dimensional hyperbolic systems of quasilinear equations can be found e.g. in~\cite{li,MyFi}.

\subsection{Our setting}

\subsubsection{Quasilinear system (\ref{eq:1})}
Quasilinear hyperbolic system (\ref{eq:1}) is written in  the  canonical form
of Riemann invariants. Specifically, 
 the matrix $A$ is diagonal and each equation of the system consists of partial derivatives of 
a single unknown function only. 
R.~Courant and P.~Lax \cite{CL}  showed that many quasilinear
one-dimensional hyperbolic  systems can be written in the form of (\ref{eq:1}). 
Moreover, B. Rozhdestvenskii and N. Yanenko \cite{RoJa} 
showed that even more general nonlinear systems of the kind
$\d_tu=g(x,t,u,\d_xu)$ are reducible to~(\ref{eq:1}). 

 \subsubsection{Smoothing boundary conditions}
 Assumption $\bf(H3)$ on the boundary conditions (\ref{eq:3}) is essentially 
 used in the proofs of the robustness Theorem \ref{robust}
 and the main Theorem \ref{main}.
  It turns out that
$\bf(H3)$ has an algebraic characterization being equivalent to 
\beq \label{w*}
\tr\left(\sum\limits_{k=1}^nW^k\right)=0,
\ee
where $W$ is the $n\times n$-matrix with entries $w_{jk}=|p_{jk}|$. This characterization implies that the assumption $\bf(H3)$ is efficiently checkable. The proof of the
equivalence of $\bf(H3)$ and \reff{w*} and a comprehensive
discussion of boundary conditions of this type can be found in \cite{kl}.

Problems (\ref{eq:1}), (\ref{eq:3}) satisfying Assumption $\bf(H3)$ occur in
chemical kinetics \cite{Zel,Zel2}, population dynamics \cite{lakra,pavel}, boundary feedback control theory
\cite{BC,CBdA,gugat1,pavel}, and inverse problems \cite{TiNg}. A collection of examples from these areas can be found in \cite{KmLyul}.

\subsection{Robustness of exponential dichotomy}
Since the nonlinear coefficients  $A$ and $B$ are a source of different singularities,
global classical solvability requires 
assumptions preventing shocks and blow-ups.
To construct small global regular  solutions to the quasilinear system \reff{eq:1}, \reff{eq:3}, we assume 
the smallness of the right-hand sides and a regular behavior of the linearized system. The latter is 
ensured by the existence
of the exponential dichotomy on $\R$
for the evolution family on $L^2((0,1);\R^n).$ A crucial technical tool in our analysis  is
the robustness of exponential dichotomy for perturbations of $a$ and $b$.
Though robustness issue has extensively been studied in the literature \cite{ChL,ChL2,H,Pal,PS}, 
 none of these results is
applicable to hyperbolic PDEs with unbounded perturbations.
D.~Henry  established a general sufficient condition 
of the robustness for abstract evolution equations
(see Theorem \ref{robust_gen} below). Attempts to apply this approach
to hyperbolic PDEs
meet complications caused by loss of regularity.
It turns out that the loss of regularity is unavoidable for perturbations of 
the coefficients $a_j$ (unbounded perturbations).
 In \cite{KRT} these complications are overcome 
for the boundary conditions of the 
smoothing type in the space of continuous functions. Here we extend our approach to the 
 $L^2$-setting. In \cite{kkr} this issue is addressed for the 
 periodic boundary conditions.
 For more general boundary conditions  the robustness issue
for hyperbolic PDEs remains unexplored.

\subsection{Our approach and the choice of spaces}
In the proof of our main Theorem \ref{main} we use an iteration procedure to construct  
classical (continuously differentiable) solutions.
Each iteration is a $C^2$-solution to  the corresponding linear problem with coefficients depending on the 
preceding iteration. To solve this linear problem,  we put it into an abstract $L^2$-setting, which is provable to be equivalent to the $L^2$-setting
in the sense of Definition \ref{L2}. Using an $L^2$-setting 
instead of the smooth setting enables us to use appropriate results from the abstract theory of 
evolution semigroups.
Due to the robustness Theorem \ref{robust}, the homogeneous 
version of the linear problem under consideration has an exponential 
dichotomy on $\R$. Consequently, the nonhomogeneous problem 
admits a unique solution given by  Green's formula whenever the  right-hand side belongs to the domain of the corresponding evolution family. 
This means that, working in the spaces of continuous functions, the  right-hand sides  have to satisfy 
compatibility conditions
for all $t\in\R$ (what cannot be fulfilled on each step of our iteration procedure), while working in $L^2$ the compatibility
conditions are not needed at all. Finally, we show that the 
$L^2$-solution is actually in $C^2$, for which we use the
smoothing property provided by Theorem \ref{lem:d}.

\subsection{Time-periodic problems}
A  natural way of proving   the existence of time-periodic solutions is provided by  
local smooth continuation theory and
bifurcation theory. In \cite{KmRe4} a generalized implicit function theorem is established 
to prove the existence of time-periodic solutions and in
 \cite{KmRe3} the  Lyapunov-Schmidt reduction is adapted to prove the existence of Hopf 
bifurcations for  {\it semilinear} hyperbolic problems.
We suggest  another approach and provide a constructive method of getting periodic solutions,
 for {\it quasilinear} hyperbolic PDEs. 
Existence  of time-periodic solutions to
nonlinear hyperbolic PDEs is a challenging problem going back to 
the classical work by E. Fermi, et al. \cite{FPU}.
Verificating the hypothesis of P. Dedye \cite{deb} numerically, they observed the 
existence of time-periodic solutions in nonlinear
hyperbolic problems.

Analysis of time-periodic solutions to hyperbolic PDEs
usually  meets a complication known as a problem of small divisors.
However,  if  boundary conditions are of smoothing type, 
then this problem does not appear at all. For the discussion of this point see \cite{kl}.

\subsection{Verification of the assumption that the linearized problem
has an exponential dichotomy on $\R$}
While it is relatively easy to verify the assumptions $\bf(H1)$, $\bf(H2)$, and $\bf(H3)$ of
our main Theorem \ref{main},
it is not so trivial to verify  the  remaining assumption that the evolution family $U(t,s)$
has an exponential dichotomy on $\R$, see e.g. \cite{kkr}. In particular, if the coefficients $a$ and $b$ do not
depend on $t$, then for the problem  \reff{eq:1u}, \reff{eq:3}
(where the boundary conditions are considered to be of a smoothing type) falls into the scope
of the spectral mapping theorem for eventually differentiable $C_0$-semigroups.
This means that  the exponential dichotomy is 
described by spectral properties
of the corresponding operator, what is described in detail 
in the next example.

\begin{ex} \label{contr} 
\rm
Our aim is to show that the  assumption about the existence of an 
exponential dichotomy
does not
contradict to $\bf(H1)$, $\bf(H2)$, and $\bf(H3)$. Furthermore, we will show that the dichotomy is not necessary trivial
or,
in other words,
the dichotomous system  \reff{eq:1u}, \reff{eq:3}
is not necessarily exponentially stable.

Consider the following $2\times 2$-first-order quasilinear hyperbolic system depending on a real parameter $\lambda$:
\begin{equation} \label{eq:1ex}
\begin{array}{ll}
 \partial_t u_1 + A_1(x,t,u)\partial_x u_1  = (\lambda + B_{11}(x,t,u))u_1 + (B_{12}(x,t,u) - 1)u_2 + f_1(x,t) \\ [1mm]
 \partial_t u_2 + A_2(x,t,u)\partial_x u_2  = B_{21}(x,t,u)u_1 + B_{22}(x,t,u)u_2 + f_2(x,t)
\end{array}
\end{equation}
with the boundary conditions
\begin{equation}  \label{eq:3ex}
u_1(0,t)=0, \quad u_1(1,t)=u_2(1,t),
\end{equation}
where
$A_1(x,t,0) \equiv 1, \ A_2(x,t,0) \equiv -1,$ and  $B_{ij}(x,t,0) \equiv 0$ with $ i,j = 1,2$.
Assume that the functions $A_j$, $B_{jk}$, and $f_j$ fulfill the conditions $\bf (H1)$ and $\bf (H2)$, what causes that
the coefficients of \reff{eq:1ex} fulfill $\bf (H1)$ and $\bf (H2)$ as well.
It is evident  that the  boundary conditions \reff{eq:3ex} fulfill the  condition $\bf (H3)$.

In order to check the remaining assumption, let us consider the  linearized problem
\begin{eqnarray} \label{lin1}
 & & \partial_t u_1 + \partial_x u_1  = \lambda u_1 - u_2, \  \partial_t u_2 - \partial_x u_2  = 0, \\
 & & u_1(0,t)=0 , \quad u_1(1,t)=u_2(1,t). \label{lin2}
\end{eqnarray}
Our aim is to state conditions on $\la$ under which the system \reff{lin1}--\reff{lin2} is dichotomous on~$\R$.
The corresponding eigenvalue problem reads
\beq
\label{exampleevp}
\begin{array}{l}
v_1'-\la v_1 + v_2 = \mu v_1, \ v_2'= \mu v_2, \ x \in (0,1),\\
v_1(0)= 0, \ v_1(1)-v_2(1)=0,
\end{array}
\ee
 $\mu$ being the spectral parameter.
It is easy to verify that there do not exist real eigenvalues  to \reff{exampleevp} and that \reff{exampleevp} is equivalent to
\begin{eqnarray*}
 & & v_1(x)=\frac{c}{\la-2\mu}\left(e^{\mu x} -e^{(\la-\mu)x}\right),\; v_2(x)=ce^{\mu x}, \\
& & e^{\la-2\mu}=2\mu-\la+1.
\end{eqnarray*}
Here $c=v_2(0)$ is a nonzero complex constant.
Setting $\la-2\mu=\xi+i\eta$ with $\xi \in \R$ and (without loss of generality) $\eta>0$, we get
$$
\eta=\sqrt{e^{2\xi}-(1-\xi)^2}
$$
and
\beq
\label{chareq}
\sin \sqrt{e^{2\xi}-(1-\xi)^2}=- \sqrt{1-e^{-2\xi}(1-\xi)^2}.
\ee
It is easy to see that equation \reff{chareq} has (besides of the solution $\xi=0$) a countable number of
solutions $0<\xi_0<\xi_1\ldots$ tending to $\infty$.
Hence, the spectrum
of \reff{exampleevp} consists of countably many geometrically simple eigenvalues
$$
\mu_j^\pm(\la)=\frac{1}{2}\left(\la-\xi_j\pm i\sqrt{e^{2\xi_j}-(1-\xi_j)^2}\right).
$$
If $\lambda \not= \xi_j$ for all $j\in\N$, then the real parts of all eigenvalues
are not equal to zero. By
Theorem \ref{lem:d}, the evolution semigroup on $L^2\left((0,1);\R^n\right)$ generated by the
linearized problem (\ref{lin1}), (\ref{lin2})
 is eventually differentiable and, hence by  \cite[p. 281, Corollary 3.12]{EN}, satisfies the spectral mapping theorem.
This entails that the system (\ref{lin1}), (\ref{lin2})
is exponentially dichotomous on $\R$ with an exponent $\alpha=\alpha(\lambda)$  fulfilling the inequality
 $\alpha(\lambda) \le \min_j|\lambda - \xi_j|.$
Furthermore, if $\xi_k < \lambda < \xi_{k+1}$, then the system \reff{eq:1ex}--\reff{eq:3ex} has a $k$-dimensional unstable submanifold.
\end{ex}

\section{Robustness of  exponential dichotomy}
\label{sec:robust}

\renewcommand{\theequation}{{\thesection}.\arabic{equation}}
\setcounter{equation}{0}

\subsection{Auxiliary statements}\label{sec:auxiliarli}

We start with providing
 existence-uniqueness results for the
homogeneous system (\ref{eq:1u}) and
its non-homogeneous version
\begin{equation}\label{eq:vg}
\partial_tu  + a(x,t)\partial_x u + b(x,t) u = f(x,t), \quad x\in(0,1),
\end{equation}
both subjected to the boundary conditions (\ref{eq:3})
and the initial conditions
 (\ref{eq:in}).

Given $s\in\R$, denote
$$
\Pi_{s} = \{(x,t)\,:\,0<x<1, s<t<\infty\}.
$$
The existence and uniqueness of  classical and piecewise smooth solutions
 to  initial-boundary value hyperbolic problems
is proved in
\cite{ijdsde}. We summarize the needed results in the following theorem.

\begin{thm}\label{km}
Suppose that the coefficients  $a$  and $b$ of the system (\ref{eq:1u})
are continuous and   have bounded and
continuous first-order partial derivatives  in $x$. Moreover,
suppose that the condition \reff{eq:h11}  is fulfilled.
Let $s \in \R$ be  arbitrarily fixed and $\varphi\in C^1([0,1];\R^n)$.

$(\io)$
If  $f$ is continuous and   has bounded and
continuous first-order partial derivatives  in $x$, and $\varphi$ fulfills the zero order compatibility conditions
\beq\label{eq:nl1}
\begin{array}{l}
\displaystyle
\varphi_j(0) =
\sum\limits_{k=1}^mp_{jk}\varphi_k(1)+\sum\limits_{k=m+1}^np_{jk}\varphi_k(0),
\quad  1\le j\le m,
\nonumber\\
\displaystyle
\varphi_j(1) = \sum\limits_{k=1}^mp_{jk}\varphi_k(1)+\sum\limits_{k=m+1}^np_{jk}\varphi_k(0),
\quad   m< j\le n,
\nonumber
\end{array}
\ee
 then in $\Pi_s$ there exists a unique continuous  solution $u(x,t)$ to the problem
\reff{eq:vg}, (\ref{eq:3}), (\ref{eq:in}) that is a piecewise continuously differentiable function (further referred to as {\rm piecewise continuously differentiable
solution}).

$(\io\io)$
If  $\varphi$ fulfills the zero order compatibility conditions \reff{eq:nl1}
and the
first order compatibility conditions
\begin{eqnarray*}
& & \psi_j(0) =
\sum\limits_{k=1}^mp_{jk}\psi_k(1)+\sum\limits_{k=m+1}^np_{jk}\psi_k(0),
\quad  1\le j\le m,  \\
& & \psi_j(1) = \sum\limits_{k=1}^mp_{jk}\psi_k(1)+ \sum\limits_{k=m+1}^np_{jk}\psi_k(0),
\quad   m< j\le n,
\end{eqnarray*}
where
$$
\psi(x)= -(a(x,s)\partial_x + b(x,s))\varphi(x),
$$
then in $\Pi_s$ there exists a unique classical  solution $u(x,t)$ to the problem
(\ref{eq:1u}), (\ref{eq:3}), (\ref{eq:in}). Moreover,  there are
 constants $K_1\ge 1$ and $\nu_1>0$ not depending on $s$, $t$, and $\varphi$ such that
 $$
\|u(\cdot,t)\|_{C^1([0,1];\R^n)}\le K_1e^{\nu_1(t-s)}\|\vphi\|_{C^1([0,1];\R^n)}\,\,\mbox{ for }\,\,
 t\ge s.
$$
\end{thm}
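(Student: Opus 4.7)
The plan is to prove Theorem \ref{km} by the method of characteristics, reducing the hyperbolic system to a coupled system of Volterra-type integral equations and then applying a contraction-mapping argument. For each $j\in\{1,\dots,n\}$ I would introduce the characteristic curve $\omega_j(\tau;x,t)$ defined as the solution of
\begin{equation*}
\partial_\tau\omega_j(\tau;x,t)=a_j(\omega_j(\tau;x,t),\tau),\qquad \omega_j(t;x,t)=x.
\end{equation*}
By the separation assumption \reff{eq:h11}, every characteristic through $(x,t)\in\overline{\Pi_s}$ strikes the parabolic boundary $\{\tau=s\}\cup\{y=0\}\cup\{y=1\}$ in a unique point with entry time $\tau_j(x,t)\in[s,t]$; this entry time is $C^1$ off a countable family of "reflection rays" that emanate from $(0,s)$ and $(1,s)$ and bounce alternately off the lateral sides.

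Integrating the $j$-th equation of \reff{eq:vg} along its characteristic yields
\begin{equation*}
u_j(x,t)=u_j^{\rm entry}(x,t)+\int_{\tau_j(x,t)}^t\Bigl[f_j-\sum_{k}b_{jk}u_k\Bigr]\bigl(\omega_j(\sigma;x,t),\sigma\bigr)\,d\sigma,
\end{equation*}
where $u_j^{\rm entry}$ equals $\vphi_j(\omega_j(s;x,t))$ when the characteristic reaches the initial line, and otherwise equals $u_j(0,\tau_j)$ or $u_j(1,\tau_j)$; in the latter case the reflection law \reff{eq:3} rewrites $u_j$ at the lateral boundary as a linear combination of the other components evaluated there. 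This is a closed system of integral equations. A Banach fixed-point argument in $C(\overline{\Pi_{s,s+T}};\R^n)$ with $T$ small (so that the Lipschitz constant coming from the $b$-term is strictly less than $1$) gives a unique continuous solution on a short time slab; iterating on consecutive slabs extends it to all of $\Pi_s$. Zero-order compatibility \reff{eq:nl1} is exactly what makes the piecewise definition of $u_j^{\rm entry}$ continuous across the corners $(0,s)$ and $(1,s)$, so the constructed $u$ is continuous on $\overline{\Pi_s}$. Since the entry-time function $\tau_j$ is $C^1$ in each of the open sectors cut out by the reflection rays, differentiating the integral representation shows that $u$ is $C^1$ in each sector, proving part $(\io)$.

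For part $(\io\io)$, differentiate the integral equations in $x$ and $t$ and write the resulting system for $(\partial_x u,\partial_t u)$ as another Volterra system whose inhomogeneity involves $\vphi'$ and $\psi$. The one-sided limits of $\partial_xu$ and $\partial_tu$ across a reflection ray differ by a quantity that, on the initial-line piece, equals the jump of $\psi$ across the corner, propagated along the two characteristics that meet there; the first-order compatibility condition on $\psi$ is precisely the statement that this jump vanishes at $(0,s)$ and $(1,s)$, and the reflection structure propagates vanishing of the jumps along the whole ray network. Hence the piecewise-$C^1$ solution is in fact $C^1$ on $\overline{\Pi_s}$. Uniqueness follows from the linearity of the system, and the exponential bound $\|u(\cdot,t)\|_{C^1}\le K_1e^{\nu_1(t-s)}\|\vphi\|_{C^1}$ follows from Gronwall's inequality applied simultaneously to the integral equations for $u$, $\partial_x u$, $\partial_t u$, using boundedness of $a,b$ and of their first $x$-derivatives. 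The main obstacle, and where the compatibility conditions really get used, is the bookkeeping argument that identifies the jump of $\partial_xu,\partial_tu$ across each reflection ray with a quantity controlled by $\vphi$, $\psi$, and the boundary coefficients $p_{jk}$, so that the compatibility conditions knock out exactly those jumps.
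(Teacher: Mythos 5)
The paper does not prove Theorem~\ref{km}: immediately before the statement it says the existence--uniqueness results are ``proved in \cite{ijdsde}'' and that the theorem merely ``summarizes the needed results,'' so there is no in-text proof to compare your argument against. That said, the method-of-characteristics reduction to a Volterra-type integral system with reflection, fixed point on short time slabs, and propagation of jumps along the reflection-ray network is exactly the standard machinery for problems of this type, and your outline is structurally sound, including the correct identification of the role of the zero-order and first-order compatibility conditions at the two corners.

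One point is imprecise enough that it would fail as stated. You attribute the contraction entirely to the $b$-term being small for small $T$, but the closed integral system also contains the boundary-reflection term $u_j^{\rm entry}=(Ru)_j(\tau_j(x,t))=\sum_k p_{jk}u_k(\cdot,\tau_j)$, whose Lipschitz constant is governed by the fixed numbers $p_{jk}$ and does \emph{not} shrink as $T\to 0$. So the map $u\mapsto u^{\rm entry}+\int(\cdots)$ is not in general a contraction on a short slab. The fix is standard but must be said: since $|a_j|\ge\Lambda_0$ and $|a_j|$ is bounded above, the number of reflections a backward characteristic can undergo on $[s,s+T]$ is bounded, so for $T$ small one can iterate the representation finitely many times until every entry term is resolved into initial data $\varphi$; what remains is $u=(\mbox{data})+\mathcal{K}u$ with an integral operator $\mathcal{K}$ of norm $O(T)$, to which Banach's theorem applies. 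The same accounting is what actually produces the exponent $\nu_1$ in part $(\io\io)$: the growth comes jointly from Gronwall on the $b$-integrals \emph{and} from the geometric amplification $\sim\max|p_{jk}|$ accrued over the $O((t-s)\|a\|_\infty)$ reflections, not from Gronwall alone. With that repair your proposal matches the expected proof.
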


Similarly to the homogeneous problem (\ref{eq:1u}), (\ref{eq:3}),
(\ref{eq:in}),  we introduce the notion of an $L^2$-generalized  solution
for the non-homogeneous problem \reff{eq:vg},
  (\ref{eq:3}), (\ref{eq:in}).

\begin{defn}\label{L2_g}
Let $\vphi\in L^2((0,1);\R^n)$.  A function $u\in C\left([s,\infty), L^2\left((0,1);\R^n\right)\right)$ is called
an {\rm $L^2$-generalized  solution} to the problem (\ref{eq:vg}), (\ref{eq:3}), (\ref{eq:in})
if for any  sequence $\vphi^l\in C_0^1([0,1];\R^n)$ with $\vphi^l\to\vphi$ in $L^2\left((0,1);\R^n\right)$
the  sequence $u^l$
of   piecewise continuously differentiable solutions to 
(\ref{eq:vg}), (\ref{eq:3}), (\ref{eq:in}) with
 $\vphi(x)$ replaced by $\vphi^l(x)$   fulfills the convergence
$$
\|u(\cdot,\theta)-u^l(\cdot,\theta)\|_{L^2\left((0,1);\R^n\right)} \to 0 \mbox{ as } l\to\infty,
$$
uniformly in $\theta$ varying in the range $s\le\theta\le s+T$, for every $T>0$.
\end{defn}

We will use the following variant of the existence-uniqueness result stated in
\cite[Theorem 2.3
]{KmLyul},  for the case of the non-homogeneous system \reff{eq:vg}.

\begin{thm}\label{evol_g}
Suppose that the coefficients  $a$, $b$, and $f$ of the system \reff{eq:vg}
 have bounded and
continuous partial derivatives  up to the first order in $(x,t)\in\overline\Pi$.
Moreover,
suppose that the condition \reff{eq:h11}  is fulfilled.
Then, given $s\in\R$ and $\vphi\in L^2\left((0,1);\R^n\right)$, there exists a unique
$L^2$-generalized  solution $u:\R^2\to\R^n$ to the problem
(\ref{eq:vg}), (\ref{eq:3}), (\ref{eq:in}).
\end{thm}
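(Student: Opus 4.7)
The plan is to exploit linearity of \reff{eq:vg}, \reff{eq:3}, \reff{eq:in} in $u$ (with source $f$) and reduce the whole question to the homogeneous evolution family supplied by Theorem~\ref{evol}. First I would construct a particular solution $v$ of \reff{eq:vg}, \reff{eq:3} with the zero initial datum $v(\cdot,s)\equiv 0$. Since $\vphi\equiv 0$ lies in $C^1$ and trivially satisfies the zero-order compatibility conditions \reff{eq:nl1}, and since $a$, $b$, $f$ possess the regularity required by Theorem~\ref{km}($\io$), that theorem produces a unique piecewise continuously differentiable $v$ in $\Pi_s$ which is continuous on $\overline\Pi_s$. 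In particular, the map $t\mapsto v(\cdot,t)$ is continuous from $[s,\infty)$ into $L^2((0,1);\R^n)$.

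The candidate $L^2$-generalized solution is then
\[
u(\cdot,t):=U(t,s)\vphi+v(\cdot,t),
\]
which lies in $C([s,\infty),L^2((0,1);\R^n))$ by Theorem~\ref{evol} and the above continuity of $v$. To verify Definition~\ref{L2_g}, I fix an arbitrary sequence $\vphi^l\in C_0^1([0,1];\R^n)$ with $\vphi^l\to\vphi$ in $L^2((0,1);\R^n)$. Compact support of $\vphi^l$ in $(0,1)$ forces both compatibility conditions in Theorem~\ref{km}, so that theorem yields classical solutions to the homogeneous problem (which, for $\vphi^l\in C_0^1$, coincide with the $L^2$-generalized solutions $U(t,s)\vphi^l$ of Theorem~\ref{evol}, since the constant approximating sequence realises the classical solution in the limit) as well as piecewise smooth solutions $u^l$ of the non-homogeneous problem with initial data $\vphi^l$. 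Linearity together with the uniqueness in Theorem~\ref{km}($\io$) then force the identity $u^l(\cdot,t)=U(t,s)\vphi^l+v(\cdot,t)$.

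Subtracting and applying \reff{k3} gives
\[
\|u(\cdot,t)-u^l(\cdot,t)\|_{L^2((0,1);\R^n)}=\|U(t,s)(\vphi-\vphi^l)\|_{L^2((0,1);\R^n)}\le Ke^{|\nu|T}\|\vphi-\vphi^l\|_{L^2((0,1);\R^n)}
\]
uniformly for $t\in[s,s+T]$, which vanishes as $l\to\infty$, so $u$ satisfies Definition~\ref{L2_g}. Uniqueness of $u$ is then immediate: any other $L^2$-generalized solution $\tilde u$ would be the same uniform $L^2$-limit of $\{u^l\}$ on every compact interval $[s,s+T]$, forcing $\tilde u=u$. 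I do not anticipate a serious obstacle here: via linearity, the whole argument collapses to strong continuity and exponential boundedness of the homogeneous evolution family from Theorem~\ref{evol}, and the only mildly delicate point---applicability of Theorem~\ref{km}($\io$) to the $C_0^1$-approximants---is trivial because compact support in $(0,1)$ automatically subsumes the compatibility conditions \reff{eq:nl1}.
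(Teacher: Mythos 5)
Your proof is correct, and it takes a genuinely different route from what the paper indicates. The paper's proof simply says it ``repeats the proof of [KmLyul, Theorem 2.3]'', i.e., it redoes the $L^2$ a priori estimate and Cauchy-sequence argument for the nonhomogeneous system in order to show the approximating piecewise smooth solutions converge in $L^2$ uniformly on compacta. You instead reduce the nonhomogeneous problem to the already-settled homogeneous one by superposition: construct a single particular solution $v$ with zero initial datum (Theorem~\ref{km}($\io$), using that $\vphi\equiv 0$ trivially satisfies \reff{eq:nl1}), and then set $u=U(\cdot,s)\vphi + v$, with the uniform $L^2$-convergence $u^l\to u$ collapsing to the exponential bound \reff{k3} for the homogeneous family. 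This buys a shorter and more conceptual argument, at the price of needing the small chain of identifications that you do supply: uniqueness in Theorem~\ref{km}($\io$) forces $u^l - v$ to equal the classical solution $w^l$ of the homogeneous problem with datum $\vphi^l\in C_0^1$, and the constant approximating sequence in Definition~\ref{L2} shows $w^l(\cdot,t)=U(t,s)\vphi^l$. Both approaches are valid; yours leans harder on Theorem~\ref{evol} as a black box, whereas the paper's reproves the estimate with the source term included (which is what one would also need if Theorem~\ref{evol} itself were not already available).
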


The proof of this theorem repeats the proof of  \cite[Theorem 2.3]{KmLyul}.

As it follows from the results of \cite{kmit,Km,KmLyul},
the problems  (\ref{eq:1u}), (\ref{eq:3}), (\ref{eq:in}) and (\ref{eq:vg}), (\ref{eq:in}), (\ref{eq:3}) have
 a smoothing property,  described in the next two theorems.

\begin{thm}\label{lem:d} 
Let the assumption {\bf (H3)} and the conditions of Theorem \ref{evol} be fulfilled.
Then there exists $d>0$ not depending on $s\in\R$ such that

$(\io)$
the evolution family
$\{U(t,\tau)\}_{t\ge \tau}$ on  $L^2\left((0,1);\R^n\right)$ generated by (\ref{eq:1u}), (\ref{eq:3})
is smoothing from $L^2\left((0,1);\R^n\right)$ to  $C^1([0,1];\R^n)$, with  smoothing time equal to $2d$.

$(\io\io)$
if $a$ and $b$ have bounded and continuous partial derivatives up to the second order in $(x,t)\in\overline\Pi$,
then the evolution family
$\{U(t,\tau)\}_{t\ge \tau}$ on  $L^2\left((0,1);\R^n\right)$ generated by (\ref{eq:1u}), (\ref{eq:3})
is smoothing from $L^2\left((0,1);\R^n\right)$ to  $C^2([0,1];\R^n)$, with smoothing time  equal to $3d$.
\end{thm}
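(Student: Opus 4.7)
The plan is to represent the solution by the method of characteristics and to use the algebraic condition $\bf(H3)$ to eliminate the initial-data contribution after finitely many boundary reflections. For each $j$ let $\omega_j(\cdot;x,t)$ denote the $j$-th characteristic through $(x,t)$, i.e.\ the solution of $\dot r=a_j(r,\cdot)$ with $r(t)=x$. By $\bf(H1)$ and~\reff{eq:h11} each characteristic has speed uniformly bounded away from zero with a fixed sign, so its backward continuation exits the strip either across the initial line $\{r'=\tau\}$ or across a lateral boundary in time at most $1/\Lambda_0$. Fix $d\ge(n+1)/\Lambda_0$. Along each characteristic,~\reff{eq:1u} reduces to a linear scalar ODE in $u_j$, and one integration from the exit time back to $t$ expresses $u_j(x,t)$ as a sum of a boundary/initial contribution and an integral of $\sum_k b_{jk}u_k$ along the characteristic arc, weighted by an explicit exponential factor.

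Iteratively rewriting every boundary encounter via~\reff{eq:3} replaces the boundary term by a finite sum of terms of the form
\[
p_{ji_1}p_{i_1i_2}\cdots p_{i_{\ell-1}i_\ell}\cdot\alpha_{i_\ell}(x,t),
\]
in which $\alpha_{i_\ell}(x,t)$ is either a value of $\varphi_{i_\ell}$ at the foot of a broken characteristic on $\{r'=\tau\}$ or an integral of $b_{i_\ell k}u_k$ along one of its arcs, and $\ell$ is the number of lateral reflections already performed. For $t\ge\tau+d$, our choice of $d$ forces every broken characteristic that actually reaches $\{r'=\tau\}$ to undergo at least $n$ reflections along the way, so by $\bf(H3)$ all chains of length $\ge n$ vanish identically and the $\varphi$-contributions drop out. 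Consequently, for $t\ge\tau+d$ the value $u_j(x,t)$ is a finite linear combination of Volterra-type integrals of $u$ along broken characteristic arcs lying strictly inside $\Pi_\tau$, with no dependence on the initial datum.

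To prove part~$(\io)$ I appeal to a standard bootstrap on this integral representation. A change of variables along each arc turns the Volterra integrals into integral operators with kernels that are $C^1$ in $(x,t)$---by the regularity of the characteristic flow under $\bf(H1)$---acting on an $L^2$-bounded factor coming from~\reff{k3}. The regularity of the kernel together with the finite length of the integration range yields continuity of the first derivatives of $u_j(x,t)$, provided one more traversal of the strip, i.e.\ a further interval of length $d$, is allowed; this gives $u(\cdot,t)\in C^1([0,1];\R^n)$ for $t\ge\tau+2d$. Under the stronger hypothesis of part~$(\io\io)$ the characteristics are $C^2$ in their endpoints, and the same procedure can be iterated once more: starting from the already obtained $C^1$-regularity at time $\tau+2d$ and spending a further traversal of length $d$, one obtains $u(\cdot,t)\in C^2([0,1];\R^n)$ for $t\ge\tau+3d$.

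The main obstacle is the combinatorial/geometric bookkeeping underlying the second step: one has to check that every backward broken characteristic reaching $\{r'=\tau\}$ in time at most $d$ has indeed undergone $\ge n$ lateral reflections, so that $\bf(H3)$ actually kills the corresponding chain product. This is where the two-sided bound~\reff{eq:h11} on $|a_j|$ and the explicit choice $d\gtrsim n/\Lambda_0$ are essential. A secondary technical point is verifying uniform $C^1$ (resp.\ $C^2$) dependence of the compositions of reflection maps on the endpoint $(x,t)$, which follows by standard ODE results from the $C^1$ (resp.\ $C^2$) regularity of~$a$ assumed in $\bf(H1)$.
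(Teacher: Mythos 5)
The paper does not prove Theorem \ref{lem:d} here; it cites \cite{kmit,Km,KmLyul} for it, so there is no in-text proof to compare against. Judged on its own merits, your sketch gets the structural skeleton right: the representation by broken characteristics, the iterated rewriting of boundary encounters into chain products $p_{ji_1}p_{i_1i_2}\cdots p_{i_{\ell-1}i_\ell}$, the observation that $\bf(H3)$ annihilates any such chain of length $\ge n$, and the time-count showing that $d\ge(n+1)/\Lambda_0$ forces at least $n$ reflections (each transit takes at most $1/\Lambda_0$, so reaching $\{t=\tau\}$ within time $d$ requires $\ge n$ arcs). This part is correct and is the same device used in Step~1 of the robustness proof (Theorem \ref{robust}), where $(SR)^n\equiv 0$ after time $s+d$.

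The gap is in the analytic bootstrap, and it is a genuine one. You assert that ``a change of variables along each arc turns the Volterra integrals into integral operators with kernels that are $C^1$ in $(x,t)$, acting on an $L^2$-bounded factor coming from \reff{k3}.'' But a single arc integral of the form
\[
(Du)_j(x,t)=-\int_{t_j(x,t)}^{t}\!\cdots\,b_{jk}\bigl(\sigma_j(\tau),\tau\bigr)\,u_k\bigl(\sigma_j(\tau),\tau\bigr)\,d\tau
\]
requires evaluating $u_k(\cdot,\tau)\in L^2(0,1)$ pointwise along the one-dimensional curve $\{(\sigma_j(\tau),\tau)\}$. Such traces are not defined for $L^2$ data, and no reparametrisation of a single arc repairs this, regardless of how smooth the kernel is. The estimate \reff{k3} bounds $\|u(\cdot,\tau)\|_{L^2}$ but does nothing for pointwise evaluation along a curve. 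Thus the crucial step $L^2\to C$ (and a fortiori $L^2\to C^1$, $L^2\to C^2$) is not established by your argument.

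What actually makes the smoothing work — and what the paper's own Step~7 of the robustness proof relies on, cf.\ the change of variables \reff{change}, $\xi\mapsto\theta=\tilde\om_k(\eta,\xi,\om_j(\xi))$, with Jacobian governed by $\bigl[a_k-a_j\bigr]/\bigl(a_ja_k\bigr)$ — is the \emph{composition} of two arc integrals along characteristics with \emph{distinct} indices $j\ne k$. In the iterated representation $u=\sum_{i}(SR)^iDu$, substituting once more produces terms of the form $D(SR)^{i'}D$, i.e.\ a double integral along a $j$-characteristic followed by a $k$-characteristic with $k\ne j$. The transversality condition $\inf|a_j-a_k|\ge\Lambda_0$ from \reff{eq:h11} guarantees the mapping $(\xi,\eta)\mapsto(\eta,\theta)$ is non-degenerate, so the iterated arc integral becomes a genuine two-dimensional integral over a region of positive measure. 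Only then can Cauchy--Schwarz convert the $L^2$-bound \reff{k3} into a pointwise (i.e.\ $C$) bound; analogous manipulations of differentiated kernels handle the $C^1$ and $C^2$ steps. Your sketch never invokes the distinctness of characteristic speeds and treats the arc integrals as though they were already two-dimensional convolutions, which is where it breaks down. To close the gap you would need to exhibit the $D\circ D$ (or $D\circ(SR)^{i}\circ D$) composite explicitly, verify the Jacobian condition using $\bf(H1)$, and then run the $C\to C^1\to C^2$ bootstrap by differentiating this two-dimensional representation.
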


\begin{thm}\label{lem:2d}
Let the assumption {\bf (H3)} and the conditions of Theorem \ref{evol} be fulfilled.
Let $s\in\R$ be arbitrary fixed.
If   $f\in BC^1(\overline\Pi;\R^n)$, then the $L^2$-generalized solution $u(x,t)$ to the problem
(\ref{eq:vg}), (\ref{eq:3}), (\ref{eq:in})
 is $C^1$-smooth after  time $s+2d$ and  satisfies the estimate
\begin{eqnarray}
\|u(\cdot,t)\|_{C^1\left([0,1];\R^n\right)}
\le 
  L \left(\|\vphi\|_{L^2((0,1);\R^n)} + \|f\|_{BC^1(\Pi;\R^n)} \right),\quad
  s+2d\le t\le s+3d.\label{apr_vv1}
\end{eqnarray}
If $f\in BC^2(\overline\Pi;\R^n)$, then  $u(x,t)$ is $C^2$-smooth after  time $s+3d$ and  satisfies the estimate
\begin{eqnarray}
&&\|u(\cdot,s+3d)\|_{C^2\left([0,1];\R^n\right)}
\le L \left(\|\vphi\|_{L^2((0,1);\R^n)} + \|f\|_{BC^2(\Pi;\R^n)} \right). \label{apr_vv}
\end{eqnarray}
Here
the constant $L>0$ depends on $d$ but does not depend on the initial time $s\in\R$, the
initial function $\varphi \in L^2((0,1);\R^n)$, and the coefficient $f$.
\end{thm}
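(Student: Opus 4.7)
The plan is to combine the Duhamel representation of the solution with the smoothing property of the homogeneous evolution family provided by Theorem~\ref{lem:d}. First I would establish the variation-of-constants formula
\[
u(\cdot,t) = U(t,s)\vphi + \int_s^t U(t,\tau)\,f(\cdot,\tau)\,d\tau, \qquad t \ge s,
\]
interpreted as a Bochner integral in $L^2((0,1);\R^n)$. This is justified by first verifying the formula for smooth approximating data $\vphi^l \in C_0^1([0,1];\R^n)$, where both sides coincide with piecewise $C^1$ solutions of (\ref{eq:vg}), (\ref{eq:3}), (\ref{eq:in}) given by Theorem~\ref{km}(i) and agree by the usual Duhamel argument along characteristics, and then passing to the limit in $L^2$ using the exponential boundedness (\ref{k3}) of $U(t,\tau)$ from Theorem~\ref{evol} and the uniform boundedness of $f$.

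Next I would treat the two contributions separately. The homogeneous term $U(t,s)\vphi$ is controlled immediately by Theorem~\ref{lem:d}(i): for $t \ge s+2d$ it lies in $C^1([0,1];\R^n)$ with
\[
\|U(t,s)\vphi\|_{C^1([0,1];\R^n)} \le c\,\|\vphi\|_{L^2((0,1);\R^n)},
\]
uniformly in $s \in \R$. For the forced part $w(\cdot,t) := \int_s^t U(t,\tau) f(\cdot,\tau)\,d\tau$, I would observe that $w$ is itself the $L^2$-generalized solution of (\ref{eq:vg}), (\ref{eq:3}) with zero initial datum; since the zero datum satisfies zero-order compatibility, Theorem~\ref{km}(i) already yields a piecewise $C^1$ function on $\overline\Pi_s$. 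To upgrade this to a genuine $C^1$-bound for $t \in [s+2d, s+3d]$, I would integrate each equation in (\ref{eq:vg}) along its $j$-th characteristic back to either $\{t=s\}$ or the lateral boundary, obtaining a representation of $w_j(x,t)$ as a finite sum of integrals of $f$ and of $bw$ along characteristic arcs, interleaved with boundary reflections governed by the constants $p_{jk}$. Assumption $\bf(H3)$ forces every reflection cascade of length $n$ to contribute zero, while $\bf(H1)$ provides a positive lower bound $d$ on the crossing time of each characteristic. Hence, for $t \ge s+2d$, every surviving term in the representation is an integral of the $BC^1$ datum $f$ along smooth characteristic arcs, and an induction on the number of reflections combined with Gronwall's inequality produces $\|w(\cdot,t)\|_{C^1} \le c\,\|f\|_{BC^1(\Pi;\R^n)}$.

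For the second part of the theorem, under the stronger hypotheses $a,b \in C^2$ and $f \in BC^2$, Theorem~\ref{lem:d}(ii) improves the homogeneous bound to $C^2$ at time $s+3d$, and one further differentiation of the characteristic representation of $w$ consumes one additional reflection cycle of length $d$, yielding a $C^2$-bound on $w(\cdot,s+3d)$ in terms of $\|f\|_{BC^2(\Pi;\R^n)}$. The main technical obstacle is the careful bookkeeping of the reflection cascade: one must verify that the algebraic nilpotency $\bf(H3)$ truly truncates all boundary contributions within time $2d$ (respectively $3d$), and that the remaining interior integrals of $f$ inherit $C^1$ (respectively $C^2$) regularity despite the fact that $f(\cdot,s)$ need not match the boundary conditions. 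In other words, the lack of first-order compatibility for $w$ at time $s$ has to be shown to be genuinely smoothed out once $t \ge s+2d$, which is exactly the mechanism that Theorem~\ref{lem:d} already exploits for the homogeneous problem and which I would adapt, term by term, to the forcing integral.
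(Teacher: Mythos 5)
The paper does not actually give a proof of Theorem~\ref{lem:2d}; it is cited as following from the references \cite{kmit,Km,KmLyul}, so there is no internal proof to compare against. Judged on its own merits, your plan has the right skeleton — Duhamel decomposition, Theorem~\ref{lem:d} for the homogeneous part, the integral representation along characteristics together with the nilpotency $\bf(H3)$ for the forced part — and you correctly recognize that the Duhamel integral alone cannot deliver the $C^1$-bound (the near-diagonal piece $\int_{t-2d}^{t}U(t,\tau)f(\cdot,\tau)\,d\tau$ sees $U(t,\tau)$ in a regime where Theorem~\ref{lem:d} gives no smoothing), which is precisely why you switch to the characteristic representation for $w$.

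However, one of the central mechanisms is stated backwards: you write that $\bf(H1)$ provides a positive \emph{lower} bound $d$ on the crossing time of each characteristic, and that this, together with $\bf(H3)$, makes the reflection cascade vanish for $t\ge s+2d$. If the crossing time were merely bounded \emph{below} by $d$, then on a time interval of length $2d$ at most two reflections could occur, and the $n$-fold product in $\bf(H3)$ would never be triggered for $n>2$. What the argument actually needs is the \emph{upper} bound on the crossing time coming from $|a_j|\ge\Lambda_0$ in \reff{eq:h11}: each crossing takes at most $1/\Lambda_0$, so by choosing $d\gtrsim n/\Lambda_0$ one guarantees that within time $d$ every characteristic has undergone at least $n$ boundary reflections, whence $Q^n$ (equivalently $(SR)^n$) annihilates all initial-data/corner contributions by assumption $\bf(H3)$. (The lower bound on crossing time coming from the boundedness of $a$ is also used, but only to ensure finitely many reflections per unit time, not to make the nilpotency kick in.) A second point to make explicit: for the $C^1$-conclusion it is not enough to bound a formal expression for $\d_x w,\ \d_t w$ via Gronwall; one must also show that the jump discontinuities of these derivatives across the characteristics emanating from the corners $(0,s)$ and $(1,s)$ — which exist whenever $f(\cdot,s)$ fails the first-order compatibility — propagate, reflect with factors $p_{jk}$, and are annihilated after $n$ reflections by $\bf(H3)$, which is exactly where the upper crossing-time bound and the choice of $d$ enter. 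You gesture at this ("the careful bookkeeping of the reflection cascade") but the directional slip about $d$ would, if taken literally, invalidate the step.
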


One of our  main technical tools is the robustness of an exponential dichotomy on $\R$
(Theorem~\ref{robust} below). To prove this result, we will check the following modification
of the sufficient condition
 established by D. Henry in \cite[Theorem 7.6.10]{H}, see 
 \cite[Theorem 2.3]{KRT}.

\begin{thm}\label{robust_gen}
Let $X$ be a Banach space.
Assume that the evolution operator $U(t,s)\in\LL\left(X\right)$ has an exponential dichotomy on
 $\R$ with an exponent $\alpha$ and a bound $M$. Assume also that 
$\|U(t,s)\|_{\LL(X)}$ is bounded by a constant over all $s,t$ such that $0\le t-s\le 1$.
Then there exist positive $\eta$, $T$,  $\alpha_1 \le \alpha$, and
$M_1 \ge M$ such that every perturbed evolution operator $\tilde U(t,s)\in\LL\left(X\right)$ with
$$
\|U(t,s) - \tilde U(t,s)\|_{\LL\left(X\right)} < \eta, \ \ {\rm whenever} \ \ t-s = T 
$$
has an exponential dichotomy on  $\R$ with an exponent  $\alpha_1 $ and a bound
$M_1$.
\end{thm}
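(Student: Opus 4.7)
My plan is to reduce the continuous-time robustness assertion to a discrete-time one by sampling at a fixed mesh $T$, prove the discrete result by a contraction argument on sequence spaces, and then lift the discrete dichotomy back to continuous time using the short-interval boundedness hypothesis on $U$. First I would choose $T>0$ so large that $Me^{-\alpha T}$ is small (say $<1/4$), and for each $s\in\R$ introduce the sampled cocycle $\mathcal V_n^s := U(s+nT, s+(n-1)T)$, so that $U(s+nT, s+kT) = \mathcal V_n^s\cdots\mathcal V_{k+1}^s$ for $n>k$. The continuous exponential dichotomy of $U$ restricts to a discrete exponential dichotomy of $\{\mathcal V_n^s\}$ with projections $P_n^s := P(s+nT)$, rate $\alpha T$ and bound $M$; the same sampling of $\tilde U$ produces operators $\tilde{\mathcal V}_n^s$ with $\|\mathcal V_n^s - \tilde{\mathcal V}_n^s\|_{\LL(X)} < \eta$ for all $n,s$.

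Next I would establish discrete robustness. Viewing the perturbed difference equation $x_n - \tilde{\mathcal V}_n^s x_{n-1} = f_n$ as an equation in $\ell^\infty(\Z;X)$, I would invert its unperturbed part using the Green operator built from $\{P_n^s\}$: its norm is bounded by a constant times $M/(1-e^{-\alpha T})$. For $\eta$ small the residual perturbation operator is then a strict contraction, and the standard construction produces a unique projection-valued sequence $\tilde P_n^s$ onto the initial data giving forward-decaying bounded solutions of the perturbed equation, with $\|\tilde P_n^s - P_n^s\|_{\LL(X)} = O(\eta)$ and with discrete dichotomy estimates for $\{\tilde{\mathcal V}_n^s\}$ of some rate $\alpha_1'<\alpha T$ and bound $M_1'\ge M$.

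Finally I would lift to continuous time. For $t = s+nT+\rho$ with $0\le\rho<T$ I would set $\tilde P(t) := \tilde U(t, s+nT)\,\tilde P_n^s\,\tilde U(s+nT, t)$, where the rightmost factor is interpreted as the natural inverse on the unstable subspace extended by $\tilde P_n^s$ on the stable subspace. Uniform bounds on $\|\tilde U(t,r)\|_{\LL(X)}$ for $0\le t-r\le T$ would be obtained by exploiting the cocycle identity together with the hypothesized local boundedness of $U$ on intervals of length $\le 1$ and the discrete closeness of $U$ and $\tilde U$ at the mesh. Combining these bounds with the discrete dichotomy estimates of $\{\tilde{\mathcal V}_n^s\}$ produces the continuous-time dichotomy of $\tilde U$ with exponents $\alpha_1 := \alpha_1'/T \le \alpha$ and some $M_1\ge M$, as claimed.

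The main obstacle is precisely this last step: the hypothesis only controls $U-\tilde U$ at the single scale $t-s=T$, so extracting uniform short-time bounds on $\tilde U(t,r)$ for $0\le t-r\le T$ from the short-time bound on $U$ is delicate and must use the cocycle structure of $\tilde U$ together with careful bookkeeping of the $\eta$-dependence. A secondary technical point is verifying that the function $t\mapsto\tilde P(t)$ constructed above is genuinely projection-valued, strongly continuous in $t$, and correctly intertwines with $\tilde U$, so that all four clauses of Definition~\ref{defn:dich} are satisfied with the promised constants.
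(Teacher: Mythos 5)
The paper does not actually prove this statement; Theorem~\ref{robust_gen} is quoted from \cite[Theorem~7.6.10]{H} and \cite[Theorem~2.3]{KRT}. Your discretize--then--lift plan is the standard route (and is indeed how those sources proceed), and Steps~1 and~2 of your outline are sound. The problem is Step~3, which you yourself flag as ``the main obstacle''; it is not merely delicate, it is genuinely impossible with the hypotheses as written, so the plan as proposed cannot be completed.

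The difficulty you anticipate --- that the hypotheses control $\tilde U$ only at the single lag $t-s=T$ --- is fatal, and neither the cocycle identity nor the local boundedness of $U$ rescues it. Concretely, take $X=\R$, $U(t,s)=e^{-(t-s)}$ (so $P\equiv I$, dichotomy with $M=\alpha=1$, and $\|U(t,s)\|\le 1$ for all $t\ge s$), and let $\tilde U(t,s)=e^{-(t-s)}\,h(t)/h(s)$ with $h(s)=(2+\sin(2\pi s/T))^{C}$ for a large parameter $C$. Then $\tilde U$ is a strongly continuous, exponentially bounded evolution family, and since $h$ is $T$-periodic one has $\tilde U(s+T,s)=U(s+T,s)$ exactly, so the smallness hypothesis at lag $T$ holds with $\eta$ as small as one likes. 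Yet $\sup_{0\le t-s\le T}\|\tilde U(t,s)\|\sim 3^{C}\to\infty$, and a short check (the dichotomy projection in $\R$ is forced to be $I$) shows any dichotomy bound $M_1$ for $\tilde U$ must grow like $3^{C}$. Hence no $\eta,T,\alpha_1,M_1$ chosen in advance can serve all admissible $\tilde U$. So the formula you propose for $\tilde P(t)$ at intermediate times, and the attendant bounds, cannot be obtained from the stated hypotheses alone.

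The missing ingredient is an additional hypothesis controlling $\tilde U$ at short times, e.g.\ that $\|\tilde U(t,s)\|_{\LL(X)}$ is bounded by a constant over all $s,t$ with $0\le t-s\le 1$, uniformly over the admissible perturbations (this is exactly what is available in the PDE application: Theorem~\ref{evol} gives $\|\tilde U(t,s)\|\le Ke^{\nu(t-s)}$ with $K,\nu$ uniform once the coefficient perturbations are small, and \cite{KRT} carries such a condition). Your plan should make this assumption explicit; with it, the lifting in Step~3 goes through: the short-time bound on $\tilde U$ makes $\tilde P(t):=\tilde U(t,s+nT)\tilde P^s_n+\left(\text{preimage construction for }I-\tilde P^s_n\right)$ a bounded projection with the correct range and kernel, the consistency of the construction across different base points $s$ follows from uniqueness of the stable/unstable sets, and all four clauses of Definition~\ref{defn:dich} follow with $\alpha_1=\alpha_1'/T$ and an $M_1$ involving the short-time bound. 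As written, however, the proposal does not prove the theorem, and cannot.
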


In the proof of the robustness Theorem~\ref{robust}, by technical reasons instead of the 
constructive condition ${\bf(H3)}$ we will use a non-constructive  condition stated below
as  ${\bf(H3)^\prime}$.  Our nearest goal is to introduce ${\bf(H3)^\prime}$ and to
 show that ${\bf(H3)}$ entails ${\bf(H3)^\prime}$. 

To this end, let us introduce a weak formulation of the problem (\ref{eq:1u}), (\ref{eq:3}), (\ref{eq:in})
 using integration along characteristic curves.
 For given $j\le n$, $x \in [0,1]$, and $t \in \R$, the $j$-th characteristic of \reff{eq:1u}
passing through the point $(x,t)\in\overline\Pi_s$ is defined
as the solution 
$$
\xi\in [0,1] \mapsto \om_j(\xi)=\om_j(\xi,x,t)\in \R
$$ 
of the initial value problem
\beq\label{char}
\partial_\xi\om_j(\xi, x,t)=\frac{1}{a_j(\xi,\om_j(\xi,x,t))},\;\;
\om_j(x,x,t)=t.
\ee
Due to the assumption \reff{eq:h11}, the characteristic curve $\tau=\om_j(\xi,x,t)$ reaches the
boundary of $\Pi_s$ in two points with distinct ordinates. Let $x_j(x,t)$
denote the abscissa of that point whose ordinate is smaller.
Remark that  the value of  $x_j(x,t)$
does not depend on $x,t$ if $t>s+\frac{1}{\Lambda_0}$. 
More precisely, it holds
\beq\label{*k}
x_j(x,t)=x_j=\left\{
 \begin{array}{rl}
 0 &\mbox{if}\ 1\le j\le m\\
 1 &\mbox{if}\ m<j\le n
\end{array}
\right.
\quad\mbox{for } t>\frac{1}{\Lambda_0}.
\ee

Write
\begin{eqnarray*}
c_j(\xi,x,t)=\exp \int_x^\xi
\left[\frac{b_{jj}}{a_{j}}\right](\eta,\om_j(\eta))\,d\eta,\quad
d_j(\xi,x,t)=\frac{c_j(\xi,x,t)}{a_j(\xi,\om_j(\xi))}.
\end{eqnarray*}
Introduce a linear bounded operator
$R: BC\left(\overline \Pi;\R^n\right)\mapsto BC\left(\R;\R^n\right)$ by
\begin{eqnarray}
\begin{array}{ll}
\displaystyle (Ru)_j(t)=\sum\limits_{k=m+1}^np_{jk} u_k(0,t)+\sum\limits_{k=1}^mp_{jk} u_k(1,t),\quad  j\le n,
\end{array}\label{eq:R}
\end{eqnarray}
and
an affine bounded operator
$Q: BC\left(\overline\Pi_s;\R^n\right)\mapsto BC\left(\overline\Pi_s;\R^n\right)$ by
\begin{eqnarray}
\label{Q}
(Qu)_j(x,t)=
\left\{\begin{array}{lcl}
c_j(x_j(x,t),x,t)(Ru)_j(\om_j(x_j(x,t))) & \mbox{if}&  x_j(x,t)\notin(0,1), \\
c_j(x_j(x,t),x,t)\vphi_j(x_j(x,t))      & \mbox{if}&  x_j(x,t)\in(0,1),
\end{array}\right.
\end{eqnarray}
being defined on the affine subspace of $BC\left(\overline\Pi_s;\R^n\right)$ of functions satisfying the
initial condition (\ref{eq:in}).

A $C^1$-map
$u: \overline\Pi_s \to \R^n$ is a  classical solution to
(\ref{eq:vg}), (\ref{eq:3}), (\ref{eq:in})  if and only if
it satisfies the following system of integral equations
\beq
\label{rep}
u_j(x,t)=(Qu)_j(x,t)
-\int_{x_j(x,t)}^x d_j(\xi,x,t)
\Bigl[\sum\limits_{k\ne j}b_{jk}u_k - f_j\Bigl](\xi,\om_j(\xi)) d\xi, \quad  j\le n.
\ee
 A $C$-map
$u: \overline\Pi_s \to \R^n$ is called a {\it continuous} solution to  (\ref{eq:vg}), (\ref{eq:3}),
(\ref{eq:in})
in $\overline\Pi_s$ if it satisfies \reff{rep} in $\overline\Pi_s$.

Introduce  a linear
bounded operator
 $S$ from $BC(\R;\R^n)$ to $BC(\overline\Pi;\R^n)$ by
\beq\label{S}
 (Sv)_j(x,t)=c_j(x_j,x,t)v_j(\om_j(x_j,x,t)), \quad j\le n,
\ee
where $x_j$ is given by \reff{*k}.
A sufficient condition ensuring a smoothing property of the evolution family generated by
(\ref{eq:1u}), (\ref{eq:3}),  (\ref{eq:in}) (see Theorem \ref{lem:d}) can now be formulated as follows:
\vskip3mm
${\bf(H3)^\prime}$\
  $(SR)^nu\equiv 0$  for all $u\in C\left(\overline\Pi;\R^n\right)$.
\vskip3mm
This condition also means that  every  (continuous) solution to the decoupled system  (\ref{eq:1u})
($b_{jk}=0$ for all $k\ne j$) with the boundary and the initial conditions
(\ref{eq:3}) and (\ref{eq:in})
stabilizes to zero in a finite time.

\begin{lemma}\label{equiv}
 Condition 
$\bf(H3)^\prime$ follows from Condition   $\bf(H3)$.
\end{lemma}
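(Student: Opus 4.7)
The plan is to expand $(SR)^n$ iteratively and observe that each term in the resulting sum carries a coefficient of the form $p_{j k_1} p_{k_1 k_2} \cdots p_{k_{n-1} k_n}$, which is exactly the kind of product that $\bf(H3)$ forces to vanish.

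First I would compute $(SRu)_j(x,t)$ explicitly. Combining the definitions \reff{eq:R} and \reff{S} gives
\[
(SRu)_j(x,t) = c_j(x_j, x, t) \sum_{k=1}^{n} p_{jk}\, u_k\bigl(\tilde x_k,\, \omega_j(x_j, x, t)\bigr),
\]
where $\tilde x_k = 0$ for $k > m$ and $\tilde x_k = 1$ for $k \le m$ (the opposite endpoint to $x_k$). This exposes the recursive pattern: one application of $SR$ replaces the $j$th component by a linear combination of boundary values of the remaining components, each weighted by a single factor $p_{jk}$ together with smooth factors coming from $c_j$ and the time-shift $\omega_j(x_j,\cdot,\cdot)$.

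Next, by induction on $N \ge 1$, I would establish a formula of the form
\[
\bigl((SR)^N u\bigr)_j(x,t) = \sum_{k_1,\ldots,k_N} p_{j k_1} p_{k_1 k_2} \cdots p_{k_{N-1} k_N}\, \gamma^{(N)}_{j,k_1,\ldots,k_N}(x, t)\, u_{k_N}\!\bigl(\tilde x_{k_N},\, \tau^{(N)}_{j,k_1,\ldots,k_N}(x, t)\bigr),
\]
where each $\gamma^{(N)}$ is a finite product of $c$-coefficients and each $\tau^{(N)}$ is an iterated composition of $\omega_j$'s evaluated along the reflected broken characteristic. The induction step is just the base case substituted into itself: applying $SR$ to $(SR)^N u$ prepends a new factor $p_{j k_0}$, a new $c_j(x_j,x,t)$, and a new time-shift $\omega_j(x_j,x,t)$ in front of the previous expression, with no cross-interference thanks to the linearity of $R$ and $S$.

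Finally, specializing to $N = n$, every coefficient $p_{j k_1} p_{k_1 k_2} \cdots p_{k_{n-1} k_n}$ in the sum is indexed by the tuple $(j, k_1, \ldots, k_n) \in \{1, \ldots, n\}^{n+1}$ and therefore vanishes by $\bf(H3)$. Hence every summand in the expansion of $(SR)^n u$ is identically zero, which is precisely $\bf(H3)^\prime$. The only real obstacle is the bookkeeping in the induction — making sure that each new application of $SR$ pulls out exactly one new $p$-factor — and this is straightforward once the base case above is written out.
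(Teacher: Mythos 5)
Your proposal is correct and takes essentially the same approach as the paper: unfold $(SR)^n$ (equivalently $(RS)^{n-1}R$) to exhibit in every summand a chain product $p_{jk_1}p_{k_1k_2}\cdots p_{k_{n-1}k_n}$, which vanishes by $\bf(H3)$. The only difference is presentational: the paper writes out $n=2$ and $n=3$ explicitly and then says ``proceeding similarly,'' while you replace that with a clean induction on $N$, which is arguably tidier but not a different argument.
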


\begin{proof}
First show that the lemma is true  for  $n=2$.
The condition  $\bf(H3)^\prime$ for  $n=2$ can be written as follows:
\begin{eqnarray*}
c_j(x_j,x,t) \left(RSRu\right)_j(\om_j(x_j,x,t))\equiv 0
\quad\mbox{for all }  (x,t)\in\overline\Pi,\, u\in BC\left(\overline\Pi;\R^2\right),\,
 j\le 2,
\end{eqnarray*}
that is equivalent  to
\begin{eqnarray}\label{*1}
 \left(RSRu\right)(t)\equiv 0
\quad\mbox{for all }   t\in\R, u\in BC\left(\overline\Pi;\R^2\right).
\end{eqnarray}
We have
\begin{eqnarray}
\lefteqn{
\displaystyle \left(RSRu\right)_j(t)=\sum\limits_{k=1}^2p_{jk}(SRu)_k(1-x_k,t)}
\nonumber\\
&&=\sum\limits_{k=1}^2p_{jk}c_k(x_k,1-x_k,t)(Ru)_k(\om_k(x_k,1-x_k,t))
\nonumber\\
&&=\sum\limits_{k=1}^2p_{jk}c_k(x_k,1-x_k,t)\sum\limits_{i=1}^2p_{ki}
u_i(1-x_i,\om_k(x_k,1-x_k,t)).\label{*2}
\end{eqnarray}

At the same time, the condition   $\bf(H3)$ for  $n=2$ reads
\beq\label{k6}
p_{jk}p_{ki}=0\quad \mbox{for all } \,  j,k,i\le 2.
\ee
As a consequence, the equations \reff{*2},  \reff{k6}, and  \reff{*1} entail the desired statement for $n=2$.

The proof for $n=3$ uses a similar argument. The analogs of \reff{*2} and \reff{k6} read
\begin{eqnarray}
& \displaystyle\left(RSRSRu\right)_j(t)=\sum\limits_{k=1}^3p_{jk}c_k(x_k,1-x_k,t)\sum\limits_{i=1}^3p_{ki}
c_i(x_i,1-x_i,\om_k(x_k,1-x_k,t))&\nonumber\\
&\displaystyle \times
\sum\limits_{s=1}^3p_{is}
u_s(1-x_s,\om_i(x_i,1-x_i,\om_k(x_k,1-x_k,t))&\label{*3}
\end{eqnarray}
and
\beq\label{k7}
p_{jk}p_{ki}p_{is}=0\quad \mbox{for all } \, j,k,i,s\le 3,
\ee
respectively. On the account of \reff{*3}, one can easily see that \reff{k7} implies $\bf(H3)^\prime$ for  $n=3$.

Proceeding similarly, one can easily obtain the desired statement for an arbitrary fixed $n\in\N$.
\end{proof}

\subsection{Robustness Theorem}

We here address the issue of robustness
 of the exponential dichotomy for the  linearized problem (\ref{eq:1u}), (\ref{eq:3}),
with respect to perturbations of the coefficients $a$ and $b$.
To this end, along with the system (\ref{eq:1u}) we will  consider its  perturbed version
\begin{equation}\label{eq:1p}
\partial_tv  + \left(a(x,t)+\tilde a(x,t)\right)\partial_x v + \bigl(b(x,t)+\tilde b(x,t)\bigr)
 v = 0, \quad x\in(0,1),
\end{equation}
where
$\tilde a=\diag(\tilde a_1,\dots,\tilde a_n)$ and
 $\tilde b=\{\tilde b_{jk}\}_{j,k=1}^n$ are matrices of real-valued functions.
Suppose that the entries of $\tilde a$ and $\tilde b$
 have bounded and
continuous partial derivatives  in $x$ and $t$ up to the second order.

Fix $\eps_0$ to be so small  that for all $\tilde a$ and $\tilde b$ with $\|\tilde a\|_{BC^1(\Pi;\M_n)}
\le \eps_0$ and
$\|\tilde b\|_{BC(\Pi;\M_n)} \le \eps_0$ the coefficients of the  system (\ref{eq:1p}) fulfill the assumptions of
Theorem \ref{evol} with $a$ and $b$ replaced by $a+\tilde a$ and $b+\tilde b$, respectively. This means that the
perturbed problem
\reff{eq:1p}, (\ref{eq:3}) generates the evolution family on $L^2\left((0,1);\R^n\right)$ (see Theorem \ref{evol}), 
which will be referred to as $\{\tilde U(t,s)\}_{t\ge s}$.
We also suppose  that the assumption   ${\bf (H3)}$ is fulfilled. Then Theorem~\ref{lem:d}$(\io)$ guarantees that the families
$\{U(t,s)\}_{t\ge s}$
and  $\{\tilde U(t,s)\}_{t\ge s}$ have a smoothing property in the sense of Definition \ref{defn:smoothing_abstr}.

\begin{thm}\label{robust}
Assume that
the evolution family $U(t,s)$  has an exponential dichotomy on~$\R$ with an exponent $\alpha>0$ and
a bound $M\ge 1$.
Then the value of  $\eps_0 > 0$ can be chosen so small that for all $\tilde a$ and $\tilde b$ with
$\|\tilde a\|_{BC^1(\Pi;\M_n)} \le \eps_0$ and
$\|\tilde b\|_{BC(\Pi;\M_n)} \le \eps_0$ the evolution family $\tilde U(t,s)$
 has an  exponential dichotomy on $\R$ with an exponent  $\alpha_1 \le \alpha$ and
a bound $M_1 \ge M$ depending on $\eps_0$
but not on $\tilde a$ and $\tilde b$.
\end{thm}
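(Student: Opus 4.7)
My plan is to verify the hypothesis of the abstract robustness criterion, Theorem~\ref{robust_gen}, with $X=L^2((0,1);\R^n)$. Exponential boundedness of $\|U(t,s)\|_{\LL(L^2)}$ on unit intervals $[s,s+1]$ is provided by Theorem~\ref{evol}. It therefore suffices to produce a single $T>0$ such that, given $\eta>0$, one can choose $\eps_0>0$ so small that
$$\sup_{s\in\R}\|U(s+T,s)-\tilde U(s+T,s)\|_{\LL(L^2)}<\eta.$$
I fix $T:=3d$, where $d$ is the smoothing constant of Theorem~\ref{lem:d}. By Lemma~\ref{equiv} assumption ${\bf(H3)^\prime}$ holds, so Theorem~\ref{lem:d}$(\io)$ supplies $C^1$-smoothing for $U$ and (once $\eps_0$ is small) for $\tilde U$, with norm bounds uniform in $s$. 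The telescoping identity
$$U(s+T,s)-\tilde U(s+T,s)=U(s+T,s+2d)[U(s+2d,s)-\tilde U(s+2d,s)]+[U(s+T,s+2d)-\tilde U(s+T,s+2d)]\tilde U(s+2d,s)$$
reduces the task to bounding each summand by $C\eps_0\|\vphi\|_{L^2}$.

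\textbf{The easy summand.} For the right-hand summand the input $\psi:=\tilde U(s+2d,s)\vphi$ lies in $C^1([0,1];\R^n)$ with $\|\psi\|_{C^1}\le C\|\vphi\|_{L^2}$. A classical Duhamel formula (justified via Theorem~\ref{evol_g}) represents
$$[U(s+T,s+2d)-\tilde U(s+T,s+2d)]\psi=-\int_{s+2d}^{s+T}\tilde U(s+T,\sigma)\bigl[\tilde a(\cdot,\sigma)\partial_x U(\sigma,s+2d)\psi+\tilde b(\cdot,\sigma)U(\sigma,s+2d)\psi\bigr]\,d\sigma.$$
Since $U(\cdot,s+2d)\psi$ is $C^1$ (Theorem~\ref{km}$(\io\io)$), the integrand is $L^2$-bounded by $C\eps_0\|\psi\|_{C^1}$ times an exponential factor, and the whole summand is $O(\eps_0)\|\vphi\|_{L^2}$.

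\textbf{The hard summand and main obstacle.} The left-hand summand forces an $L^2$-perturbation estimate for $[U(s+2d,s)-\tilde U(s+2d,s)]\vphi$ with $\vphi$ merely in $L^2$ and no smoothing available throughout $[s,s+2d]$. Variation of constants is blocked by the fact that the inhomogeneity $\tilde a\,\partial_x u$ contains $\partial_x$ applied to an $L^2$-function. To sidestep this, I exploit the characteristic representation \reff{rep}, in which no derivatives of $u$ appear. Approximating $\vphi$ by $\vphi^l\in C_0^1$, the classical solutions $u^l,\tilde u^l$ satisfy \reff{rep} with coefficients $(d_j,c_j,\om_j)$ and $(\tilde d_j,\tilde c_j,\tilde\om_j)$, respectively; the latter differ from the former by $O(\eps_0)$ in $C^1$ by ODE-perturbation applied to \reff{char}. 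The difference $u^l-\tilde u^l$ thus splits into four contributions coming from shifted characteristics, perturbed exponential factors $c_j$ and Jacobians $d_j$, the perturbed boundary operator $Q$, and the new source $\tilde b\,u$. A change of variable $\xi\mapsto\om_j(\xi,x,t)$, with Jacobian $1/a_j$ bounded away from zero by (\ref{eq:h11}), converts characteristic integrals into integrals in $t$ and makes everything $L^2$-controllable in $x$. Crucially, iterating \reff{rep} $n$ times and invoking ${\bf(H3)^\prime}$ to annihilate the iterated nonlocal boundary contribution $(SR)^n$ makes the Picard recursion terminate. A Gronwall argument then yields
$$\|u^l(\cdot,t)-\tilde u^l(\cdot,t)\|_{L^2}\le C(d)\eps_0\|\vphi^l\|_{L^2},\qquad t\in[s,s+2d],$$
with $C(d)$ independent of $s$ and $l$; Definition~\ref{L2} lets one pass to the $L^2$-limit. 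Assembling the two estimates gives $\|U(s+T,s)-\tilde U(s+T,s)\|_{\LL(L^2)}\le C(d)\eps_0$ uniformly in $s$; choosing $\eps_0$ so that $C(d)\eps_0$ is below the $\eta$ supplied by Theorem~\ref{robust_gen} concludes. The crux throughout is precisely this $L^2$-estimate on $[s,s+2d]$: this is where the unboundedness of the perturbation $\tilde a\,\partial_x$ forbids a Duhamel argument and forces the use of the characteristic representation together with the nilpotency property ${\bf(H3)^\prime}$.
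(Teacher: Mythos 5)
Your scaffolding matches the paper's: both reduce to Theorem~\ref{robust_gen} with $T=3d$, both use the characteristic representation \reff{rep}, the nilpotency ${\bf(H3)'}$, the smoothing estimates, and a Gronwall closure. The telescoping at $s+2d$ is a different organizing device from the paper's formula \reff{rep_(u-v)_smooth}, but the decisive estimate is the one you call the ``hard summand,'' and this is where your argument has a genuine gap.

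Your iteration of \reff{rep} on $[s,s+2d]$ yields, in effect, $w:=u^l-\tilde u^l=\sum_{i<n}Q^iDw+\sum_{i<n}Q^ig$ with $g=(Q-\tilde Q)\tilde u^l+(D-\tilde D)\tilde u^l$. For $s\le t\le s+1/\Lambda_0$, where the backward characteristics still hit $t=s$, the term $(Q-\tilde Q)\tilde u^l$ contains $c_j(x_j)\varphi_j(x_j(x,t))-\tilde c_j(\tilde x_j)\varphi_j(\tilde x_j(x,t))$, hence a contribution proportional to $\varphi_j(x_j(x,t))-\varphi_j(\tilde x_j(x,t))$. The shift $|x_j-\tilde x_j|$ is indeed $O(\eps_0)$, but for $\varphi\in L^2$ a shift by $O(\eps_0)$ is \emph{not} bounded by $\beta(\eps_0)\|\varphi\|_{L^2}$ for any modulus $\beta\to0$ (take $\varphi=\eps_0^{-1/2}\mathbb{1}_{[0,\eps_0]}$, whose $L^2$ norm is $1$ while the $L^2$ norm of the shifted difference is $\sqrt{2}$). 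So the source term in your Gronwall scheme is not small in $L^2$, and the closure fails; your change of variables $\xi\mapsto\om_j(\xi,x,t)$ does not touch this difficulty, since it is an argument shift of $\varphi$ itself. The same obstruction arises from the shifted characteristic inside $(D-\tilde D)\tilde u^l$ once one uses the mean value theorem: the leftover $\partial_t\tilde u^l$ is not $L^2$-controlled by $\|\varphi\|_{L^2}$.

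The paper avoids this precisely by never needing $(u-v)$ itself in $L^2$ over $[s,s+2d]$. Its formula \reff{rep_(u-v)_smooth} expresses $(u-v)(\cdot,s+3d)$ through $D(u-v)$ and the $C^1$-controlled terms $F(\tilde a\partial_xv),F(\tilde b v)$; then Step~3 sets $w=D(u-v)$ so that \emph{every} problematic source carries a $D$-prefactor: $D(Q-\tilde Q)v$, $D(D-\tilde D)v$. That extra integral is what makes the shift of $\varphi$ (and of $Rv$) $L^2$-controllable after the mean value theorem, changes of variables such as $\xi\mapsto x_k(\xi,\om_j(\xi))$, and integration by parts; this is the substance of Steps 5--9 and is exactly the work your proposal compresses into a single sentence. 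Without the $D$-prefactor, the argument does not close. (A secondary, fixable point: for the ``easy summand,'' $\psi=\tilde U(s+2d,s)\varphi$ satisfies first-order compatibility for the \emph{perturbed} coefficients, so $U(\cdot,s+2d)\psi$ is only piecewise $C^1$ by Theorem~\ref{km}$(\io)$; $\partial_x U(\sigma,s+2d)\psi\in L^2$ via Lemma~\ref{H1} rather than by Theorem~\ref{km}$(\io\io)$.)
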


\begin{proof}
We check the sufficient conditions for the robustness of exponential dichotomies
given in Theorem \ref{robust_gen}.
Since the evolution family  $U(t,s)$ is exponentially bounded, the 
uniform boundedness of $\|U(t,s)\|_{\LL(X)}$  over all $s,t$ such that $0\le t-s\le 1$
follows directly from the estimate \reff{k3} and the assumption {\bf (H1)}.
It remains  to  prove that
there exists a function
$\beta:[0,1]\to\R$ with $\beta(\eps)\to 0$ as $\eps\to 0$,
 such that for all   $\tilde a$ and $\tilde b$ with
$\|\tilde a\|_{BC^1(\Pi;\M_n)}\le \eps$ and
$\|\tilde b\|_{BC(\Pi;\M_n)} \le \eps$
  we have
\beq\label{d1}
\bigl\|(U-\tilde U)(s+T,s)\bigr\|_{\LL\left(L^2\left((0,1);\R^n\right)\right)}\le\be(\eps)
\ee
for some $T>0$.

Recall that the operator $Q$ given by \reff{Q} is defined on the
affine subspace of $BC\left(\overline\Pi_s;\R^n\right)$ of functions satisfying the
initial condition (\ref{eq:in}).
It is important to note that $Q$ maps this subspace  into itself.
Due to {\bf (H1)}, {\bf (H3)}, and Lemma  \ref{equiv}, one can fix some $d>0$ such that
 $\left[Q^nu\right](x,s+d)=\left[(SR)^nu\right](x,s+d)\equiv 0$ for all
$x\in[0,1]$ and $s\in\R$.
Moreover, the value of $d$
remains the same, whenever the operators $S$, $Q$, and $R$ are perturbed by means of replacing $a$ and $b$
by $a+\tilde a$ and $b+\tilde b$ such that
$\|\tilde a\|_{BC^1(\Pi;\M_n)}\le \eps$ and
$\|\tilde b\|_{BC(\Pi;\M_n)} \le \eps$. On the account of  Theorem~\ref{lem:d} and Lemma \ref{equiv}, we conclude that
$U(s+d,s)\in\LL\left(L^2\left((0,1);\R^n\right),C\left([0,1];\R^n\right)\right)$
and
$U(s+2d,s)\in\LL\left(L^2\left((0,1);\R^n\right),C^1\left([0,1];\R^n\right)\right)$.

Fix an arbitrary  $s\in\R$.
Note that  any  initial function $\vphi\in C^1_0\left([0,1];\R^n\right)$, for which we have
$\vphi(0)=\vphi(1)=0$,
satisfies both the zero-order and the first-order compatibility conditions between
 (\ref{eq:3}) and \reff{eq:in}.
 Therefore, for given  $\vphi\in C^1_0\left([0,1];\R^n\right)$, by Theorem \ref{km}
there  exist  unique classical solutions $u$  and $v$
  to the problems (\ref{eq:1u}), (\ref{eq:3}),
\reff{eq:in}  and
(\ref{eq:1p}), (\ref{eq:3}), \reff{eq:in}, respectively.

Due to Theorem \ref{L2} and the fact that the space 
$C^1_0\left([0,1];\R^n\right)$ is dense in $L^2\left((0,1);\R^n\right)$, the desired estimate
 \reff{d1} will be proved if we derive the bound
\beq\label{main_estim}
\|(u-v)(\cdot,s+3d)\|_{L^2\left((0,1);\R^n\right)}\le \be(\eps)\|\vphi\|_{L^2\left((0,1);\R^n\right)}
\ee
uniformly in $s\in\R$,  $\vphi\in C^1_0\left([0,1];\R^n\right)$, and
$\tilde a,\tilde b\in BC^1\left( \overline\Pi;\M_n\right)$
with $\|\tilde a\|_{BC^1(\Pi;\M_n)}\le \eps$ and
$\|\tilde b\|_{BC(\Pi;\M_n)} \le \eps$. In \reff{main_estim} the number  $T$ is taken to be $3d$ by technical reasons.

We split the derivation of the estimate \reff{main_estim} into a sequence of steps.
\vskip2mm
{\it Step1.
Derivation of an equation for  $(u-v)\big|_{\overline\Pi_{s+3d}}$.
}
By the smoothing property,  after the time $t=s+2d$ the
solutions $u$ and $v$ are continuously differentiable and, therefore, satisfy pointwise the systems
(\ref{eq:1u}) and (\ref{eq:1p}), respectively . Our starting point is that  the difference $u-v$
fulfills the equation
$$
(\partial_t  + a(x,t)\partial_x + b(x,t))
 (u-v) = \tilde a(x,t)\partial_xv+ \tilde b(x,t)v,\quad (x,t)\in\Pi_{s+2d}
$$
and the boundary conditions
\begin{eqnarray*}
& & (u_j-v_j)(0,t) = \left(R(u-v)\right)_j(t),\quad 1\le j\le m,\quad t\ge s,
\\
& & (u_j-v_j)(1,t) =  \left(R(u-v)\right)_j(t), \quad m<j\le n,\quad t\ge s,
\end{eqnarray*}
This implies the  operator equality
\begin{eqnarray}
\label{rep_(u^l-v^l)}
(u-v)\big|_{\overline\Pi_{s+2d}}=(SR)(u-v)+D(u-v)+F\left(\tilde a\partial_xv\right)+
F\bigl(\tilde bv\bigr),
\end{eqnarray}
where the operators $S$ and $R$ are given  by \reff{S}, \reff{eq:R},
respectively, and $D, F: BC\left(\overline\Pi_s;\R^n\right) \to
BC\left(\overline\Pi_s;\R^n\right)$ are  linear bounded operators defined by
\begin{eqnarray*}
\left(D w\right)_j(x,t) & = &
-\int_{x_j(x,t)}^x d_j(\xi,x,t)\sum_{k=1\atop k\not=j}^nb_{jk}(\xi,\om_j(\xi))
w_k(\xi,\om_j(\xi))\,d\xi,\quad j\le n,\\
\left(F f\right)_j(x,t)&=&\int_{x_j(x,t)}^x d_j(\xi,x,t)f_j(\xi,\om_j(\xi))d\xi,\quad j\le n.
\end{eqnarray*}
Since $u-v$ occurs in both sides of \reff{rep_(u^l-v^l)}, this equation can be iterated. Note that $D$
operates with $u-v$ on a different (shifted) domain. Hence, such iteration is possible only on a subdomain
of $\overline\Pi_{s+2d}$. Specifically, $n$ iterations are possible on $\overline\Pi_{s+3d}$ and, doing so, on the first step we obtain
\begin{eqnarray}
(u-v)\big|_{\overline\Pi_{s+3d}}
=(SR)^2(u-v)+(I+SR)D(u-v)
+(I+SR)F\left(\tilde a\partial_xv\right)+(I+SR)F\bigl(\tilde bv\bigr).\nonumber
\end{eqnarray}
Iterating this, that is, substituting \reff{rep_(u^l-v^l)} into the last equation once and once again, in the $n$-th
step we meet the property ${\bf(H3)^\prime}$, resulting in the identity
$$
(SR)^{n}(u-v)\equiv 0.
$$
Consequently, we get 
$$
(u-v)\big|_{\overline\Pi_{s+3d}}=\sum_{i=0}^{n-1}(SR)^{i}D(u-v)+
\sum_{i=0}^{n-1}(SR)^{i}F\left(\tilde a\partial_xv\right)+
\sum_{i=0}^{n-1}(SR)^{i}F\bigl(\tilde bv\bigr).
$$
This gives us the desired formula 
\beq\label{rep_(u-v)_smooth}
(u-v)(x,s+3d)=\left[\sum_{i=0}^{n-1}(SR)^{i}D(u-v)+
\sum_{i=0}^{n-1}(SR)^{i}F\left(\tilde a\partial_xv\right)+
\sum_{i=0}^{n-1}(SR)^{i}F\bigl(\tilde bv\bigr)\right](x,s+3d).
\ee

To prove the estimate \reff{main_estim}, we derive appropriate  smallness bounds for
each of the three summands in the right hand side of  \reff{rep_(u-v)_smooth} separately.

\vskip2mm
{\it Step 2.
Obtaining an upper bound of the type $\be(\eps)\|\vphi\|_{L^2\left((0,1);\R^n\right)}$ for the
second and the third summands  in \reff{rep_(u-v)_smooth}.
}
Given $s<\tau<\infty$, denote
$$
\Pi_{s}^\tau = \{(x,t)\,:\,0<x<1, s<t<\tau\}.
$$
Since the equality \reff{rep_(u-v)_smooth}  is considered at
$t=s+3d$, 
 the operator $F$ in the right-hand side of \reff{rep_(u-v)_smooth} 
 operates with the functions $\d_xv$ 
 on $\overline\Pi_{s+2d}^{s+3d}$, which allows us to use 
 the smoothing estimate \reff{apr_vv1}. More precisely, we apply the Cauchy-Schwarz inequality
 to $F\left(\tilde a\partial_xv\right)$ and  then use the  estimate \reff{apr_vv1}.  The needed bound for the
second  summand   immediately follows  from \reff{apr_vv1}, the boundedness of the operators $S$ and $R$, 
and the smallness of 
$\tilde a$ and $\tilde b$. The desired bound for  the third summand is a 
simple consequence of the smallness of 
$\tilde a$ and~ $\tilde b$.

\vskip2mm

To estimate  the first summand in the right-hand side of
\reff{rep_(u-v)_smooth}, it suffices  to derive a smallness bound for $D(u-v)$.

\vskip2mm
{\it Step 3.
Derivation of an operator equation for $D(u-v)$.
}
The continuous solutions  $u$ and $v$ on $\overline\Pi_s^{s+3d}$ satisfy the operator equations
$$
u=Qu+Du,\quad v=\tilde Qv+\tilde D v,
$$
where
 the operators $Q,\tilde Q, D,$ and $\tilde D$ are restricted to the subspace of
$C\bigl(\overline\Pi_s^{s+3d};\R^n\bigr)$ of functions satisfying the initial
condition (\ref{eq:in}).
Note that the operators  $Q,\tilde Q, D,$ and $\tilde D$ map
$C\bigl(\overline\Pi_s^{s+3d};\R^n\bigr)$ into itself.
Thus, for the difference $u-v$ we have
\begin{eqnarray}
\label{rep_(u-v)0}
u-v=Q(u-v)+\bigl(Q-\tilde Q\bigr)v+D(u-v)+\bigl(D-\tilde D\bigr)v,
\end{eqnarray}
hence
\begin{eqnarray}
\label{rep_(u-v)1}
D(u-v)=DQ(u-v)+D\bigl(Q-\tilde Q\bigr)v+D^2(u-v)+D\bigl(D-\tilde D\bigr)v.
\end{eqnarray}
Substitute \reff{rep_(u-v)0} into the first summand in the right-hand side of \reff{rep_(u-v)1} and
 rewrite the last equation with respect to the new variable
$
w=D(u-v).
$
We get
\begin{eqnarray}
\label{rep_(u-v)2}
w=DQ^2(u-v)+D(I+Q)\bigl(Q-\tilde Q\bigr)v
+D(I+Q)w+D(I+Q)\bigl(D-\tilde D\bigr)v.
\end{eqnarray}
Continuing in this fashion (again substituting \reff{rep_(u-v)0} into the first summand
in the right-hand side of \reff{rep_(u-v)2}), 
in the $n$-th step  we arrive at the
 formula
\begin{eqnarray*}
w=DQ^n(u-v)+D\sum_{i=0}^{n-1}Q^i\bigl(Q-\tilde Q\bigr)v
+D\sum_{i=0}^{n-1}Q^iw+D\sum_{i=0}^{n-1}Q^i\bigl(D-\tilde D\bigr)v.
\end{eqnarray*}
Furthermore, combining  the condition ${\bf(H3)^\prime}$ and the fact that $(u-v)(\cdot,s)\equiv 0$ on $[0,1]$,
we conclude that
 $\left[Q^{n}(u-v)\right](x,t)\equiv 0$ on $\overline\Pi_s^{s+3d}$.
The resulting equation for $w$ restricted to $\overline\Pi_s^{s+3d}$
reads
\begin{eqnarray}
\label{rep_(u-v)4}
w=D\sum_{i=0}^{n-1}Q^i\bigl(Q-\tilde Q\bigr)v+D\sum_{i=0}^{n-1}Q^i\bigl(D-\tilde D\bigr)v
+D\sum_{i=0}^{n-1}Q^iw.
\end{eqnarray}

\vskip2mm
{\it Step 4.
Obtaining an upper  bound of the type $\be(\eps)\|\vphi\|_{L^2\left((0,1);\R^n\right)}$ for  $w=D(u-v)$.
}
Next we prove that there exists a function
$\beta_0:[0,1]\to\R$ with $\beta_0(\eps)\to 0$ as $\eps\to 0$, for which we have the estimate
\begin{eqnarray}
\label{apr_w}
\max\limits_{s\le t\le s+3d}\|w(\cdot,t)\|_{L^2\left((0,1);\R^n\right)}\le
\be_0(\eps)
\|\vphi\|_{L^2\left((0,1);\R^n\right)},
\end{eqnarray}
being uniform in   $s\in\R$, $\vphi\in C_0^1([0,1];\R^n)$, and $\tilde a,\tilde b\in BC^1\left(\overline\Pi;\M_n\right)$
with
$\|\tilde a\|_{BC^1(\Pi;\M_n)}\le \eps$ and
$\|\tilde b\|_{BC(\Pi;\M_n)} \le \eps$.

By technical reasons, we rewrite  the integral operator $D$
in the following equivalent form, obtained using integration along characteristic curves  in $t$
(rather than in $x$)
\begin{eqnarray*}
\left(D w\right)_j(x,t) & = &
-\int_{t_j(x,t)}^t  \exp \int_t^\tau
b_{jj}(\sigma_j(\eta),\eta)\,d\eta
\sum_{k\not=j}b_{jk}(\sigma_j(\tau),\tau)
w_k(\sigma_j(\tau),\tau)\,d\tau,
\end{eqnarray*}
where 
$$
\tau\in\R \mapsto \sigma_j(\tau)=\sigma_j(\tau,x,t)\in[0,1]
$$ 
is the inverse form of
the $j$-th characteristic of \reff{eq:1}
passing through the point $(x,t)\in\overline\Pi$, $t_j(x,t)$ is the minimum value of $\tau$
at which the characteristic $\tau=\sigma_j(\tau,x,t)$ reaches $\d\Pi_s$.
The function $\sigma_j(\tau)$ is the solution to the initial value problem
$$\partial_\tau \sigma_j(\tau,x,t) = a_j(\sigma_j(\tau,x,t),\tau), \
\sigma_j(t,x,t) = x.$$
Therefore, the  estimate \reff{apr_w}  follows from the Gronwall's inequality applied to \reff{rep_(u-v)4},
provided the first two summands satisfy an upper  bound of the type $\be(\eps)\|\vphi\|_{L^2\left((0,1);\R^n\right)}$.

The rest of the proof consists in deriving  the desired upper bound for the first two summands
in the right-hand side of \reff{rep_(u-v)4}. In Steps 5--8 we get the desired bound for the second
summand, while in Step 9 we get it for the first summand.
\vskip2mm

{\it Step 5.
Derivation of  a representation formula for the
second summand  in  \reff{rep_(u-v)4}.
}
Remark that the main technicalities appear already in  the case $i=0$ and the proof for  $i\ge 1$ uses a  similar
argument. Hence, let $i=0$  and
 estimate  the summand $D(D-\tilde D)v$.

In what follows, we will use the following notation.
 The $j$-th characteristic of \reff{eq:1p}
passing through the point $(x,t)\in\overline\Pi_s$ is defined
as the solution $\xi\in [0,1] \mapsto \tilde\om_j(\xi)=\tilde\om_j(\xi,x,t)\in \R$ of the initial value problem
\beq\label{char1}
\partial_\xi\tilde\om_j(\xi, x,t)=\frac{1}{[a_j+\tilde a_j](\xi,\tilde\om_j(\xi, x,t))},\;\;
\tilde\om_j(x,x,t)=t.
\ee
Write
\begin{eqnarray*}
\tilde c_j(\xi,x,t)=\exp \int_x^\xi
\left[\frac{b_{jj}+\tilde b_{jj}}{a_{j}+\tilde a_{j}}\right](\eta,\tilde\om_j(\eta))\,d\eta,\quad
\tilde d_j(\xi,x,t)=\frac{\tilde c_j(\xi,x,t)}{[a_j+\tilde a_j](\xi,\tilde\om_j(\xi))}.
\end{eqnarray*}
Introduce  the linear bounded operator $\tilde D: BC\left(\overline\Pi_s;\R^n\right) \to
BC\left(\overline\Pi_s;\R^n\right)$ and the affine bounded operator
$\tilde Q: BC\left(\overline\Pi_s;\R^n\right)\mapsto BC\left(\overline\Pi_s;\R^n\right)$   by
\begin{eqnarray*}
\left(\tilde D w\right)_j(x,t) & = &
-\int_{\tilde x_j(x,t)}^x \tilde d_j(\xi,x,t)\sum_{k\not=j}
\left[b_{jk}+\tilde b_{jk}\right](\xi,\tilde \om_j(\xi))
w_k(\xi,\tilde \om_j(\xi))\,d\xi,\\
\left(\tilde Qu\right)_j(x,t) &=&
\left\{\begin{array}{lcl}
\tilde c_j(\tilde x_j(x,t),x,t)(Ru)_j(\tilde \om_j(\tilde x_j(x,t),x,t)) & \mbox{if}&  \tilde x_j(x,t)\notin(0,1), \\
\tilde c_j(\tilde x_j(x,t),x,t)\vphi_j(\tilde x_j(x,t))      & \mbox{if}&  \tilde x_j(x,t)\in(0,1),
\end{array}\right.
\end{eqnarray*}
where $\tilde x_j(x,t)$ denotes the abscissa of the point with the smallest ordinate,
at which the characteristic curve $\tau=\tilde\om_j(\xi,x,t)$ reaches the
boundary of $\Pi_s$.
Set
$$
\begin{array}{ll}
d_{jki}(\xi,\eta,x,t)=d_j(\xi,x,t) d_k(\eta,\xi,\om_j(\xi))b_{jk}(\xi,\om_j(\xi))
b_{ki}(\eta,\om_k(\eta,\xi,\om_j(\xi))),\\ [2mm]
\tilde d_{jki}(\xi,\eta,x,t)=d_j(\xi,x,t)\tilde d_k(\eta,\xi,\om_j(\xi))b_{jk}(\xi,\om_j(\xi))
\left[b_{ki}+\tilde b_{ki}\right](\eta,\tilde \om_k(\eta,\xi,\om_j(\xi))).
\end{array}
$$
Then we have
\begin{equation}\label{D11}
\begin{array}{ll}
\displaystyle\Bigl[\bigl(D^2-D\tilde D\bigr)v\Bigr]_j(x,t)
=\sum_{k\not=j}\sum_{i\not=k}
\int_{x_j(x,t)}^x \int_{x_k(\xi,\om_j(\xi))}^\xi d_{jki}(\xi,\eta,x,t)
v_i(\eta,\om_k(\eta,\xi,\om_j(\xi)))\, d \eta d \xi\\
\displaystyle-\sum_{k\not=j}\sum_{i\not=k}
\int_{x_j(x,t)}^x \int_{\tilde x_k(\xi,\om_j(\xi))}^\xi\tilde  d_{jki}(\xi,\eta,x,t)
v_i(\eta,\tilde \om_k(\eta,\xi,\om_j(\xi)))\, d \eta d \xi\\
\displaystyle
=\sum_{k\not=j}\sum_{i\not=k}
\int_{x_j(x,t)}^x \int_{x_k(\xi,\om_j(\xi))}^{\tilde x_k(\xi,\om_j(\xi))}\tilde  d_{jki}(\xi,\eta,x,t)
v_i(\eta,\tilde \om_k(\eta,\xi,\om_j(\xi)))\, d \eta d \xi\\
\displaystyle
+\sum_{k\not=j}\sum_{i\not=k}
\int_{x_j(x,t)}^x \int_{x_k(\xi,\om_j(\xi))}^\xi\left(d_{jki}(\xi,\eta,x,t)-\tilde
 d_{jki}(\xi,\eta,x,t)\right)v_i(\eta,\tilde \om_k(\eta,\xi,\om_j(\xi)))\, d \eta d \xi
\\
+
\displaystyle\sum_{k\not=j}\sum_{i\not=k}
\displaystyle
\int_{x_j(x,t)}^x \int_{x_k(\xi,\om_j(\xi))}^\xi d_{jki}(\xi,\eta,x,t)
\Bigl(v_i(\eta,\om_k(\eta,\xi,\om_j(\xi)))
-v_i(\eta,\tilde \om_k(\eta,\xi,\om_j(\xi)))
\Bigr)
\, d \eta d \xi.
\end{array}
\end{equation}
Let us estimate each of the three summands in the right hand side separately.
\vskip2mm
{\it Step 6.
Obtaining some technical inequalities.
}
Due to the regularity and the boundedness assumptions on the coefficients $a$, $\tilde a$,  $b$,  and $\tilde b$,
we have
\beq\label{al2}
\begin{array}{ll}
\displaystyle\max_{\xi,\eta\in[0,1]}\left\|\tilde \om_k(\eta,\xi,\om_j(\xi,\cdot,\cdot))-\om_k(\eta,\xi,\om_j(\xi,\cdot,\cdot))
\right\|_{C\left(\overline\Pi_s^{s+3d}\right)}\le\tilde\be(\eps),\\
\displaystyle\max_{\xi\in[0,1]}
\|\tilde x_k(\xi,\om_j(\xi,\cdot,\cdot))-x_k(\xi,\om_j(\xi,\cdot,\cdot))\|_{C\left(\overline\Pi_s^{s+3d}\right)}=
\|\tilde x_k-x_k\|_{C\left(\overline\Pi_s^{s+3d}\right)}\le\tilde\be(\eps),\\
\displaystyle\max_{\xi,\eta\in[0,1]}
\|\tilde  d_{jki}(\xi,\eta,\cdot,\cdot)-d_{jki}(\xi,\eta,\cdot,\cdot)\|_{C\left(\overline\Pi_s^{s+3d}\right)}\le\tilde\be(\eps),
\\
\displaystyle\max_{\xi,\eta\in[0,1]}\left\|\frac{d}{d\xi}\left[\tilde \om_k(\eta,\xi,\om_j(\xi,\cdot,\cdot)-
\om_k(\eta,\xi,\om_j(\xi,\cdot,\cdot))\right]
\right\|_{C\left(\overline\Pi_s^{s+3d}\right)}\le\tilde\be(\eps), \\
\displaystyle\max_{\xi\in[0,1]}
\left\|\frac{d}{d\xi}\left[\tilde x_k(\xi,\om_j(\xi,\cdot,\cdot))-x_k(\xi,\om_j(\xi,\cdot,\cdot))\right]\right\|_{C\left(\overline\Pi_s^{s+3d}\right)}
\le\tilde\be(\eps)
\end{array}
\ee
for all $s\in\R$, for all  $\tilde a$ and $\tilde b$ with
$\|\tilde a\|_{BC^1(\Pi;\M_n)} \le \eps$ and
$\|\tilde b\|_{BC(\Pi;\M_n)} \le \eps$, for all $j,k\le n$, and for a function $\tilde\be:[0,1]\to\R$
 approaching zero as $\eps\to 0$.
In order to prove $(\ref{al2})_1$ we use  the equations
\reff{char} and \reff{char1} and obtain
\begin{eqnarray*}
\frac{d}{d\eta}(\om_k(\eta) - \tilde\om_k(\eta)) =
  \frac{a_k(\eta,\tilde\om_k(\eta)) - a_k(\eta,\om_k(\eta)) + \tilde a_k(\eta,\tilde\om_k(\eta))}
 {a_k(\eta,\om_k(\eta))(a_k(\eta,\tilde\om_k(\eta))+\tilde a_k(\eta,\tilde\om_k(\eta)))},\quad
\om_k(x) - \tilde\om_k(x)=0.
\end{eqnarray*}
Application of \reff{eq:h11} gives
\begin{eqnarray*}
 |\om_k(\eta) - \tilde\om_k(\eta)| \le \frac{1}{\Lambda_0^2}\left|\int_x^\eta
\left( \|a_k\|_{BC^1(\Pi)}|\om_k(\eta_1) - \tilde\om_k(\eta_1)|
 + \|\tilde a_k\|_{BC(\Pi)}\right)\,d\eta_1\right|.
\end{eqnarray*}
The Gronwall's inequality yields
\begin{eqnarray*}
 |\om_k(\eta) - \tilde\om_k(\eta)| \le \frac{1}{\Lambda_0^2} 
 \|\tilde a_k\|_{BC(\Pi)} \exp\left\{\frac{|\eta-x|}{\Lambda_0^2} \|a_k\|_{BC^1(\Pi)}\right\},
\end{eqnarray*}
implying  $(\ref{al2})_1$.

To derive  $(\ref{al2})_2$, we proceed similarly, but now we consider the initial value problem for the difference
$\tilde\sigma_k(\tau,x,t) - \sigma_k(\tau,x,t)$, namely
\begin{eqnarray*}
 \frac{d}{d\tau}(\tilde\sigma_k(\tau) - \sigma_k(\tau)){d\tau} = a_k(\tilde\sigma_k(\tau), \tau) +
\tilde a_k(\tilde\sigma_k(\tau), \tau) -
 a_k(\sigma_k(\tau),\tau),\quad \tilde \sigma_k(t) - \sigma_k(t)=0.
\end{eqnarray*}
It remains to recall that, if $0<x_k(x,t)<1$, then  $x_k(x,t)=\sigma_k(s,x,t)$.

The estimate $(\ref{al2})_3$ follows directly from $(\ref{al2})_1$ and the smallness of $\tilde a$
and $\tilde b$.

To prove $(\ref{al2})_4$, we use the identities
\beq\label{dx}
\d_x\tilde\om_k(\xi,x,t)=-\frac{1}{a_k(x,t)+\tilde a_k(x,t)} \exp \int_\xi^x
\frac{[\d_ta_k+\d_t\tilde a_k](\eta,\tilde\om_k(\eta))}{[a_k+\tilde a_k]^2(\eta,\tilde\om_k(\eta))}\, d\eta,
\ee
\beq\label{dt}
\d_t\tilde\om_k(\xi,x,t)= \exp \int_\xi^x
\frac{[\d_ta_k+\d_t\tilde a_k](\eta,\tilde\om_k(\eta))}{[a_k+\tilde a_k]^2(\eta,\tilde\om_k(\eta))}\, d\eta
\ee
and do the following calculations:
\begin{eqnarray*}
 & & \frac{d}{d\xi}(\omega_k(\eta,\xi,\omega_j(\xi)) -\tilde\omega_k(\eta,\xi,\omega_j(\xi))) =
 \partial_2[\omega_k - \tilde\omega_k](\eta,\xi,\omega_j(\xi)) \\
 & & + \frac{1}{a_j(\xi,\omega_j(\xi))}\partial_3 [\omega_k - \tilde\omega_k](\eta,\xi,\omega_j(\xi))   \\
 & & = \frac{1}{[a_k + \tilde a_k](\xi,\omega_j(\xi))} \exp\int_\xi^x \frac{\partial_2 [a_k +\tilde a_k](\eta_1,\omega_k(\eta_1,\xi,\omega_j(\xi)))}
 {[a_k + \tilde a_k]^2(\eta_1,\omega_k(\eta_1,\xi,\omega_j(\xi)))} d\eta_1  \\
 & & - \frac{1}{a_k(\xi,\omega_j(\xi))} \exp\int_\xi^x \frac{\partial_2 a_k(\eta_1,\omega_k(\eta_1,\xi,\omega_j(\xi)))}
 {a_k^2(\eta_1,\omega_k(\eta_1,\xi,\omega_j(\xi)))} d\eta_1  \\
 & & - \frac{1}{a_j(\xi,\omega_j(\xi))} \Biggl( \exp\int_\xi^x \frac{\partial_2 [a_k +\tilde a_k](\eta_1,\omega_k(\eta_1,\xi,\omega_j(\xi)))}
 {[a_k + \tilde a_k]^2(\eta_1,\omega_k(\eta_1,\xi,\omega_j(\xi)))} d\eta_1  \\
 & & - \exp\int_\xi^x \frac{\partial_2 a_k(\eta_1,\omega_k(\eta_1,\xi,\omega_j(\xi)))}
 {a_k^2(\eta_1,\omega_k(\eta_1,\xi,\omega_j(\xi)))} d\eta_1
 \Biggl).
 \end{eqnarray*}
 Here and in what follows $\d_i$  denotes the partial derivative with respect to the $i$-th  argument.
The estimate $(\ref{al2})_4$ is now an easy consequence of the inequality
$\|\tilde a\|_{BC(\Pi;\M_n)} + \|\partial_t \tilde a\|_{BC(\Pi;\M_n)} \le \varepsilon$.

Finally, to prove $(\ref{al2})_5$, we take into account the equalities
\begin{eqnarray*}
& & \partial_x \sigma_k(\tau,x,t) = \exp \int_t^\tau \partial_1 a_k( \sigma_k(\eta,x,t),\eta)d\eta, \\
& & \partial_t \sigma_k(\tau,x,t) = - a_k(x,t)\exp \int_t^\tau \partial_1 a_k( \sigma_k(\eta,x,t),\eta)d\eta,
 \end{eqnarray*}
 which yields
\begin{eqnarray*}
  & & \frac{d}{d\xi} \left(\sigma_k(\tau,\xi,\omega_j(\xi)) -\tilde\sigma_k(\tau,\xi,\omega_j(\xi))\right) =
   \partial_2[\sigma_k - \tilde\sigma_k](\tau,\xi,\omega_j(\xi))  \\
 & & + \frac{1}{a_j(\xi,\omega_j(\xi))}\partial_3 [\sigma_k - \tilde\sigma_k](\tau,\xi,\omega_j(\xi))   \\
 & & = \exp \int_t^\tau \partial_1 a_k(\sigma_k(\eta_1,\xi,\omega_j(\xi)),\eta_1)d\eta_1 -
 \exp \int_t^\tau \partial_1[a_k +\tilde a_k]( \sigma_k(\eta_1,\xi,\omega_j(\xi)),\eta_1)d\eta_1  \\
 & & - \frac{1}{a_j(\xi,\omega_j(\xi))}\Biggl[ a_k(\xi,\omega_j(\xi))
 \exp \int_t^\tau \partial_1 a_k( \sigma_k(\eta_1,\xi,\omega_j(\xi)),\eta_1)d\eta_1  \\
 & & - [a_k + \tilde a_k](\xi,\omega_j(\xi))  \exp \int_t^\tau \partial_1[a_k +\tilde a_k]( \sigma_k(\eta_1,\xi,\omega_j(\xi)),\eta_1)d\eta_1 \Biggl].
\end{eqnarray*}
Similarly to the above, the estimate $(\ref{al2})_5$ now follows directly from the bound
$\|\tilde a\|_{BC(\Pi;\M_n)} + \|\partial_x \tilde a\|_{BC(\Pi;\M_n)} \le \varepsilon$.

 Since $\tilde\be(\eps)\to 0$ as  $\eps\to 0$, below
 we  suppose that $\tilde\be(\eps)<1$.

\vspace{2mm}
{\it Step 7.
Obtaining an upper bound of the type $\be(\eps)\|\vphi\|_{L^2\left((0,1);\R^n\right)}$ for the
first and the second summands  in  the right-hand side of \reff{D11}.
}
 For the integrals in the first summand  we
use $(\ref{al2})_1$ and the Cauchy-Schwarz inequality, obtaining
\begin{eqnarray}
\lefteqn{
\left|
\int_{x_j(x,t)}^x \int_{\tilde x_k(\xi,\om_j(\xi))}^{x_k(\xi,\om_j(\xi))}\tilde  d_{jki}(\xi,\eta,x,t)
v_i(\eta,\tilde \om_k(\eta,\xi,\om_j(\xi)))\, d \eta d \xi
\right|
}\nonumber\\
&&
\le\max\limits_{\xi,\eta,x\in[0,1]}\sup\limits_{t\in\R}|\tilde d_{jki}(\xi,\eta,x,t)|
\max\limits_{y\in[0,1-\tilde\be(\eps)]}\int_{x_j(x,t)}^x\int_{y}^{y+\tilde\be(\eps)}
|v_i(\eta,\tilde\om_k(\eta,\xi,\om_j(\xi)))|\, d \eta d \xi\nonumber\\
&&=\max\limits_{\xi,\eta,x\in[0,1]}\sup\limits_{t\in\R}|\tilde d_{jki}(\xi,\eta,x,t)| \nonumber\\
& & \times \max\limits_{y\in[0,1-\tilde\be(\eps)]}\left\{
\int_{y}^{y+\tilde\be(\eps)} \int_{x_j(x,t)}^x|v_i(\eta,\tilde\om_k(\eta,\xi,\om_j(\xi)))|\, d \xi d \eta\right\}.\label{1st}
\end{eqnarray}
For a fixed  $\eta$, let us change the variables
\beq\label{change}
\xi\mapsto\theta=\tilde\om_k(\eta,\xi,\om_j(\xi)).
\ee
Taking into  account
the equalities \reff{dx} and \reff{dt},
from \reff{change} we get
\begin{eqnarray}\label{theta1}
d\theta& = & \Bigl(\d_2\tilde\om_k(\eta,\xi,\om_j(\xi))+\d_3\tilde\om_k(\eta,\xi,\om_j(\xi))\d_\xi\om_j(\xi)\Bigl)d\xi\nonumber\\
\displaystyle
& = &\left[\frac{a_k+\tilde a_k-a_j}{a_j(a_k+\tilde a_k)}\right](\xi,\om_j(\xi))
\d_3\tilde \om_k(\eta,\xi,\om_j(\xi))\, d\xi.
\end{eqnarray}
As it follows
from \reff{theta1}, the change of variables \reff{change} is non-degenerate  for all $\xi,x\in[0,1]$  and $t\in[s,s+3d]$ whenever
 $\left[a_k+\tilde a_k\right](\xi,\om_j(\xi))-a_j(\xi,\om_j(\xi))\ne 0$. Remark that the last condition is true due to the assumption
$\bf(H1)$  and the choice of $\eps_0$. Denote the inverse of \reff{change} by
  $\tilde x(\theta)=\tilde x(\theta,\eta,x,t)$. One can see that  $\tilde x(\theta,\eta,x,t)$ is continuous in all its arguments.
Therefore, changing the variables according to \reff{change},
the double integral in the right-hand side of \reff{1st} reads
\begin{eqnarray*}
& & \int_{y}^{y+\tilde\be(\eps)}\int_{\tilde\om_k(\eta,x_j(x,t),\om_j(x_j(x,t)))}^{\tilde\om_k(\eta)}
\Biggl|\left[\frac{a_j(a_k+\tilde a_k)}{a_k+\tilde a_k-a_j}\right](\tilde x(\theta),\om_j(\tilde x(\theta))) \\
& & \times\left[\d_3\tilde \om_k(\eta,\tilde x(\theta),\om_j(\tilde x(\theta)))\right]^{-1}v_i(\eta,\theta)\Biggl|\,d\theta d\eta \\
& & \le C_1
\max\limits_{\theta\in[s,s+3d]}\int_{y}^{y+\tilde\be(\eps)}|v_i(\eta,\theta)|\,d\eta\le
C_1 \tilde\be(\eps)\max\limits_{\theta\in[s,s+3d]}\left(\int_{y}^{y+\tilde\be(\eps)}|v_i(\eta,\theta)|^2\,d\eta\right)^{1/2} \\
& & \le
KC_1 \tilde\be(\eps)e^{3d\nu}\|\vphi\|_{L^2\left((0,1);\R^n\right)}\le
 \be_1(\eps)\|\vphi\|_{L^2\left((0,1);\R^n\right)},
\end{eqnarray*}
where
$$
\begin{array}{cc}
C_1=\max\limits_{\eta,x\in[0,1]}\sup\limits_{t\in\R}
\left|
\tilde\om_k(\eta)-\tilde\om_k(\eta,x_j(x,t),\om_j(x_j(x,t)))
\right|\\\times
\displaystyle\max\limits_{\theta,\eta,x\in[0,1]}\sup\limits_{t\in\R}
\left|\left[\frac{a_j(a_k+\tilde a_k)}{a_k+\tilde a_k-a_j}\right](\tilde x(\theta),\om_j(\tilde x(\theta)))
\left[\d_3\tilde \om_k(\eta,\tilde x(\theta),\om_j(\tilde x(\theta)))\right]^{-1}\right|
\end{array}
$$
and the function $\be_1:[0,1]\to\R$
 approaches zero as $\eps\to 0$.
 Here we used
the assumption (\ref{eq:h11}) and the estimate \reff{k3}
about the exponential boundedness of the evolution operator. The desired estimate for the first summand
 in \reff{D11} is derived.

Similar estimate for the second summand  in \reff{D11} immediately follows from
the assumption (\ref{eq:h11}) and the estimates \reff{al2}.

\vspace{2mm}
{\it Step 8.
Obtaining an upper bound of the type $\be(\eps)\|\vphi\|_{L^2\left((0,1);\R^n\right)}$ for the
third  summand  in  the right-hand side of \reff{D11}.
} 
Fix arbitrary $1\le j,k\le m$
(for the other $j, k$ we proceed similarly)
and use the mean value theorem and the estimates (\ref{al2}).
This results in the following representation of the third  summand,  which will be denoted  by  $I_1(x,t)$:
\begin{eqnarray}
I_1(x,t)=\lefteqn{
\int_{x_j(x,t)}^x \int_{x_k(\xi,\om_j(\xi))}^\xi d_{jki}(\xi,\eta,x,t)
\left( \om_k(\eta,\xi,\om_j(\xi))-\tilde\om_k(\eta,\xi,\om_j(\xi))\right)
}\nonumber\\
&&\times\int_0^1
\d_2v_i\Bigl(\eta,\gamma \om_k(\eta,\xi,\om_j(\xi))+(1-\gamma)\tilde\om_k(\eta,\xi,\om_j(\xi))\Bigl)
\, d\gamma d \eta d \xi. \label{ik2}
\end{eqnarray}
Using the notation
\begin{eqnarray*}
 & & \rho(\xi,\eta,x,t,\gamma) = \gamma\left[\frac{a_k-a_j}{a_j a_k}\right](\xi,\omega_j(\xi))\partial_3\omega_k(\eta,\xi,\omega_j(\xi))  \nonumber\\
 & & + (1-\gamma)\left[\frac{a_k +\tilde a_k -a_j}{a_j(a_k + \tilde a_k)}\right](\xi,\omega_j(\xi))\partial_3\tilde\omega_k(\eta,\xi,\omega_j(\xi)),
\end{eqnarray*}
we have
\begin{eqnarray}
 & & \frac{d}{d\xi}v_i\left(\eta, \gamma\omega_k(\eta,\xi,\omega_j(\xi)) + (1 - \gamma)\tilde\omega_k(\eta,\xi,\omega_j(\xi))\right) \nonumber\\
 & & = \partial_2 v_i\Bigl(\eta, \gamma\omega_k(\eta,\xi,\omega_j(\xi)) + (1 - \gamma)\tilde\omega_k(\eta,\xi,\omega_j(\xi))\Bigl)
 \rho(\xi,\eta,x,t,\gamma). \label{ik3}
\end{eqnarray}
Remark that $\rho(\xi,\eta,x,t,\gamma) \not= 0$ for all $\xi,\eta,x \in [0,1], \ t \in [s,s+3d],$ and $\gamma \in [0,1],$
since  our  assumptions imply that 
$(a_k + \tilde a_k - a_j)(\xi, \omega_j(\xi)) \not= 0$ for all $\|\tilde a_k\|_{BC(\Pi)} \le \varepsilon_0.$
Note also that $\partial_3\omega_k$ and  $\partial_3\tilde\omega_k$ are strictly positive, see (\ref{dt}).
On the account of (\ref{ik3}), the expression (\ref{ik2}) can be rewritten as follows:
\begin{eqnarray}
& &  I_1(x,t) = \int_{x_j(x,t)}^x \frac{d}{d\xi} \Biggl(\int_{x_k(\xi,\om_j(\xi))}^\xi d_{jki}(\xi,\eta,x,t)
\left(\om_k(\eta,\xi,\om_j(\xi))-\tilde \om_k(\eta,\xi,\om_j(\xi))\right)
\nonumber\\
& & \times\int_0^1 \rho^{-1}(\xi,\eta,x,t,\gamma)
v_i\Bigl(\eta,\gamma\om_k(\eta,\xi,\om_j(\xi))+(1-\gamma)\tilde\om_k(\eta,\xi,\om_j(\xi))\Bigl)
\, d\gamma d \eta \Biggl) d \xi \nonumber\\
& & + \int_{x_j(\xi,t)}^x \frac{dx_k(\xi,\omega_j(\xi))}{d\xi} d_{jki}(\xi, x_k(\xi,\omega_j(\xi),x,t)
\left[\omega_k - \tilde\omega_k\right](x_k(\xi,\omega_j(\xi)),\xi,\omega_j(\xi))  \nonumber\\
& & \times \int_0^1 \rho^{-1}(\xi, x_k(\xi,\omega_j(\xi)),x,t,\gamma) v_i\Bigl(x_k(\xi,\omega_j(\xi)), [\gamma\omega_k + (1-\gamma)\tilde\omega_k]
(x_k(\xi,\omega_j(\xi)),\xi,\omega_j(\xi))\Bigl)d\gamma d\xi  \nonumber\\
& & - \int_{x_j(\xi,t)}^x \int_{x_k(\xi,\omega_j(\xi))}^\xi \frac{d}{d\xi} \Bigl[ d_{jki}(\xi,\eta,x,t)
\left(\omega_k(\eta,\xi,\omega_j(\xi)) - \tilde\omega_k(\eta,\xi,\omega_j(\xi))\right)  \nonumber\\
& & \times \rho^{-1}(\xi,\eta,x,t,\gamma)\Bigl] v_i\Bigl(\eta,\gamma\omega_k(\eta,\xi,\omega_j(\xi)) + (1-\gamma)\tilde\omega_k(\eta,\xi,\omega_j(\xi))\Bigl)
d\gamma d\eta d\xi. \label{ik4}
\end{eqnarray}

Denote by $x_{jk}(\theta,x,t)$ the $x$-coordinate of the 
point where  the characteristics
$\omega_j(\xi,x,t)$ and $\omega_k(\xi,\theta,s)$ intersect (if they do),  
that is
\beq\label{2.60}
\omega_j(x_{jk}(\theta,x,t),x,t)=\omega_k(x_{jk}(\theta,x,t),\theta,s).
\ee
Suppose  for definiteness that $a_j(x,t) > a_k(x,t)$ (the case of $a_j(x,t) < a_k(x,t)$ is similar).
Since $x_k(\xi,\omega_j(\xi)) = 0$ for all $\xi \in [x_j(x,t), x_{jk}(0,x,t)]$,
the integral over the interval $[x_j(x,t), x_{jk}(0,x,t)]$ in the  second summand of (\ref{ik4}) disappears. Furthermore, if $x_{jk}(0,x,t)\notin[x_j(x,t),x]$,
then evidently the integral over $[x_{j}(x,t),x]$  in this summand disappears.
We therefore need to estimate
the second summand in (\ref{ik4})  whenever $x_{jk}(0,x,t)\in[x_j(x,t),x]$. If this is the case,  then the
second summand reads
\begin{eqnarray}
 & & \int_{x_{jk}(0,x,t)}^x\frac{dx_k(\xi,\omega_j(\xi))}{d\xi}d_{jki}(\xi,x_k(\xi,\omega_j(\xi)),x,t)  \nonumber\\
 & & \times[\omega_k - \tilde\omega_k](x_k(\xi,\omega_j(\xi)),\xi,\omega_j(\xi)) \nonumber\\
 & & \times \int_0^1 \rho^{-1}(\xi,x_k(\xi,\omega_j(\xi)),x,t,\gamma) \nonumber\\
& &  \times v_i\Bigl(x_k(\xi,\omega_j(\xi)), [\gamma\omega_k
 + (1-\gamma)\tilde\omega_k]
(x_k(\xi,\omega_j(\xi)),\xi,\omega_j(\xi))\Bigl)d\gamma d\xi, \label{ik5}
\end{eqnarray}
where
$\frac{dx_k(\xi,\omega_j(\xi))}{d\xi}$
can be computed using the identity
$\omega_k(x_k(\xi,\omega_j(\xi)),\xi,\omega_j(\xi)) \equiv s.$
Indeed, this and (\ref{char}) yield
\begin{eqnarray}
& & \frac{dx_k(\xi,\omega_j(\xi))}{d\xi} = -a_k( x_k(\xi,\omega_j(\xi)), \omega_k(x_k(\xi,\omega_j(\xi)),\xi,\omega_j(\xi)))  \nonumber\\
& & \times
\left(\partial_2\omega_k(x_k(\xi,\omega_j(\xi)),\xi,\omega_j(\xi)) + a_j^{-1}(\xi,\omega_j(\xi))
\partial_3\omega_k(x_k(\xi,\omega_j(\xi)),\xi,\omega_j(\xi))\right),
\label{ik53}
\end{eqnarray}
where $\partial_2 \omega_k$ and $\partial_3 \omega_k$ are given by the formulas (\ref{dx}) and (\ref{dt}), respectively.

 Using the change of variables $\xi \to \theta = x_k(\xi,\omega_j(\xi))$
with the inverse $\xi = x_{jk}(\theta,x,t),$ one rewrites (\ref{ik5}) in the form
\begin{eqnarray}
 & & \int_0^{x_k(x,t)}\int_0^1 \frac{dx_k(\xi,\omega_j(\xi))}{d\xi}d_{jki}(\xi,\theta,x,t)[\omega_k - \tilde\omega_k]
 (\theta,\xi,\omega_j(\xi))  \nonumber\\
 & & \times \rho^{-1}(\xi,\theta,x,t) v_i\Bigl(\theta,[\gamma\omega_k + (1-\gamma)\tilde\omega_k](\theta,\xi,\omega_j(\xi))\Bigl)\Bigl|_{\xi =
 x_{jk}(\theta,x,t)} \nonumber\\
 & & \times \frac{\partial}{\partial\theta}x_{jk}(\theta,x,t)d\gamma d\theta, \label{in6}
\end{eqnarray}
where the derivative $\frac{\partial}{\partial\theta}x_{jk}(\theta,x,t)$ can be easily computed from the identity (\ref{2.60})
as 
$$\frac{\partial}{\partial\theta}x_{jk}(\theta,x,t) = \frac{\partial_2 \omega_k(x_{jk}(\theta,x,t),\theta,s)[a_k a_j](x_{jk}(\theta,x,t),
\omega_j(x_{jk}(x,t),x,t))}{[a_k - a_j](x_{jk}(\theta,x,t),\omega_j(x_{jk}(x,t),x,t))}. $$
Taking into account (\ref{in6}), the expression $I_1$ given by (\ref{ik4}) now reads
\begin{eqnarray}
 & & I_1(x,t)=\int_{x_k(x,t)}^x d_{jki}(x,\eta,x,t)(\omega_k(\eta) - \tilde\omega_k(\eta)) \nonumber\\
 & & \times \int_0^1 \rho^{-1}(x,\eta,x,t,\gamma) v_i\Bigl(\eta,\gamma \omega_k(\eta) + (1-\gamma)\tilde\omega_k(\eta)\Bigl) d\gamma d\eta  \nonumber\\
 & & + \int_0^{x_k(x,t)} \int_0^1 \frac{dx_k(\xi,\omega_j(\xi))}{d\xi}x_k(\xi,\omega_j(\xi)) d_{jki}(\xi,\eta,x,t)[\omega_k - \tilde\omega_k]
 (\theta,\xi, \omega_j(\xi))  \nonumber\\
 & & \times \rho^{-1}(\xi,\theta,x,t,\gamma) \frac{\partial x_{jk}(\theta,x,t)}{\partial\theta}
 v_i\Bigl(\theta,[\gamma \omega_k + (1-\gamma)\tilde\omega_k](\theta,\xi,\omega_j(\xi))\Bigl)\Bigl|_{\xi=x_{jk}(\theta,x,t)}d\gamma d\theta \nonumber\\
 & & - \int_{x_j(x,t)}^x \int_{x_k(\xi,\omega_j(\xi))}^\xi \int_0^1 \frac{d}{d\xi} \Bigl[ d_{jki}(\xi,\eta,x,t)
 (\omega_k(\eta,\xi,\omega_j(\xi)) - \tilde\omega_k(\eta,\xi,\omega_j(\xi))) \nonumber\\
 & & \times  \rho^{-1}(\xi,\eta,x,t,\gamma)\Bigl]
 v_i\Bigl(\eta, \gamma \omega_k(\eta,\xi,\omega_j(\xi)) + (1-\gamma)\omega_k(\eta,\xi,\omega_j(\xi))\Bigl) d\gamma d\eta d\xi. \label{ik52}
\end{eqnarray}

We are prepared to derive the desired upper bound for $|I_1|.$ To this end, we use the estimates
(\ref{eq:h11}), (\ref{k3}), (\ref{al2}) and apply the Cauchy-Schwarz inequality to \reff{ik52}.
As a result,  we derive the estimate
\begin{eqnarray*}
 |I_1(x,t)| \le C_2 \tilde\beta(\varepsilon) \max_{\theta \in [s,s+3d]} \left(\int_0^1 |v_i(\eta,\theta)|^2 d\eta\right)^{1/2} \le
 K C_2 \tilde\beta(\varepsilon) e^{3d\nu} \|\varphi\|_{L^2((0,1);\R^n)},
\end{eqnarray*}
the constant $C_2$ being independent of $s$, $\vphi$, and $\eps$.

\vspace{2mm}
{\it Step 9.
Obtaining an upper bound of the type $\be(\eps)\|\vphi\|_{L^2\left((0,1);\R^n\right)}$ for the
first  summand in the right-hand side of \reff{rep_(u-v)4}.
}
\vspace{2mm}
Again, we consider the case $i=0$ and estimate $D(\tilde Q-Q) v$ (the proof of $i\ge 1$ uses a similar arguments).
Our starting point is the formula
\begin{eqnarray} \label{DQQ}
& & \left[D(\tilde Q-Q) v\right]_j(x,t)  = \nonumber\\
& & =\sum_{ k\not=j} \int_{x_j(x,t)}^x  d_j(\xi,x,t)
b_{jk}(\xi,\om_j(\xi))\left[(\tilde Q-Q) v\right]_k(\xi,\omega_j(\xi))d\xi \nonumber\\
& &
= \sum_{ k\not=j} \int_{x_j(x,t)}^x  d_j(\xi,x,t)
\Bigl(
\tilde c_k\left(\tilde x_k(\xi,\omega_j(\xi)),\xi,\omega_j(\xi)\right)-
c_k\left( x_k(\xi,\omega_j(\xi)),\xi,\omega_j(\xi)\right)
\Bigl)(\tilde Pv_k)(\xi,\omega_j(\xi))\,d\xi
\nonumber\\
& &
+\sum_{ k\not=j} \int_{x_j(x,t)}^x  d_j(\xi,x,t)b_{jk}(\xi,\om_j(\xi))c_k\left( x_k(\xi,\omega_j(\xi)),\xi,\omega_j(\xi)\right)
\left[(\tilde P-P)v_k\right](\xi,\omega_j(\xi))\,d\xi,
\end{eqnarray}
where
\begin{eqnarray} \label{ik9}
 (Pv)_k(x,t) = \left\{\begin{array}{ll}
                 (Rv)_k(w_k(x_k(x,t))), & \ x_k(x,t) \not\in (0,1), \\
                 \varphi_k(x_k(x,t)), &\ x_k(x,t) \in (0,1),
               \end{array}\right.
\end{eqnarray}
while $\tilde P$ is given by the formula (\ref{ik9}) with $\tilde\omega_k$ and $\tilde x_k$ in place of
$\omega_k$ and $x_k$, respectively. Next, we use \reff{al2}$_1$ to conclude that for all $j,k \le n$
\begin{eqnarray} \label{ik1}
 \max_{\xi,x \in [0,1]} \max_{s\le t\le s+3d}\left|\tilde c_k(\tilde x_k(\xi,\omega_j(\xi)),\xi,\omega_j(\xi)) -
c_k(x_k(\xi,\omega_j(\xi)),\xi,\omega_j(\xi))\right|\le C_3\tilde\be(\eps),
\end{eqnarray}
where $C_3$ does not depend on $s$ and $\varepsilon.$ Applying the inequality (\ref{ik1})
to the first sum in the right-hand side  of (\ref{DQQ}) and using the bound (\ref{k3}),
we estimate the absolute value of
this summand 
from above by $C_4\tilde\be(\eps)$, where the positive constant $C_4$ does not depend on $s$ 
and~$\varepsilon$.

Now we aim at estimating the second sum in (\ref{DQQ}), denoted further by 
$$
I_2(x,t)=\sum_{k\ne j}I_{2k}(x,t).
$$
To this end, fix $j,k\le m$ (for the other $j,k$ we proceed similarly), and let
 $\tilde x_{jk}(\theta,x,t)$ denote the value of $\xi$ at which the characteristics $\omega_j(\xi,x,t)$
and $\tilde\omega_k(\xi,\theta,s)$ intersect (if they do). Note that $\tilde x_{jk}(\theta,x,t)$ fulfills the equation
$$
\omega_j(\tilde x_{jk}(\theta,x,t),x,t) = \tilde\omega_k(\tilde x_{jk}(\theta,x,t),\theta,s).
$$
Suppose that $a_j(x,t) > a_k(x,t)$ (the case $a_j(x,t) < a_k(x,t)$ is treated similarly).
Then
\begin{eqnarray}\label{ik11}
 & & I_{2k} (x,t )=   \int_{x_j(x,t)}^{\min\{x_{jk}(0,x,t),\tilde x_{jk}(0,x,t)\}} d_j(\xi,x,t)
b_{jk}(\xi,\omega_j(\xi))c_k(x_k(\xi,\omega_j(\xi)),\xi,\omega_j(\xi)) \nonumber\\
 & & \times \Bigl((Rv)_k(\tilde\omega_k(\tilde x_k(\xi,\omega_j(\xi)),\xi,\omega_j(\xi))) -
(Rv)_k(\omega_k(x_k(\xi,\omega_j(\xi)),\xi,\omega_j(\xi)))\Bigl) d\xi \nonumber \\
& & +  \int_{\min\{x_{jk}(0,x,t),\tilde x_{jk}(0,x,t)\}}^{\max\{x_{jk}(0,x,t),\tilde x_{jk}(0,x,t)\}} d_j(\xi,x,t)
b_{jk}(\xi,\omega_j(\xi))c_k(x_k(\xi,\omega_j(\xi)),\xi,\omega_j(\xi))  \nonumber\\
 & & \times [(\tilde P - P)v]_k(\xi,\omega_j(\xi))d\xi  \nonumber\\
 & & +    \int_{\max\{x_{jk}(0,x,t),\tilde x_{jk}(0,x,t)\}}^x d_j(\xi,x,t)
b_{jk}(\xi,\omega_j(\xi))c_k(x_k(\xi,\omega_j(\xi)),\xi,\omega_j(\xi)) \nonumber\\
 & & \times \Bigl(\varphi_k( \tilde x_k(\xi,\omega_j(\xi))) - \varphi_k( x_k(\xi,\omega_j(\xi)))\Bigl) d\xi  =
 I_{2k1}(x,t ) + I_{2k2}(x,t ) +I_{2k3}(x,t ).
\end{eqnarray}
To estimate the second summand $I_{2k2}$, first derive the bound
\begin{eqnarray} \label{ik10}
 |x_{jk}(0,x,t) - \tilde x_{jk}(0,x,t)| \le \beta_2(\varepsilon),
\end{eqnarray}
where $\beta_2(\varepsilon)\to 0$ as $\eps\to 0$. Recall that we are in the case $j, k \le m$ and $a_j(x,t) > a_k(x,t),$ and
for the other $j,k$ we proceed similarly.

Characteristic functions $\sigma_k(\tau,0,s)$ and $\tilde\sigma_k(\tau,0,s)$ are solutions to the initial value
problems
\begin{eqnarray} \label{xjk}
 \frac{dx}{d\tau} = a_k(x,\tau), \quad x(s) = 0
\end{eqnarray}
and
\begin{eqnarray} \label{xjk1}
  \frac{dx}{d\tau} = a_k(x,\tau) + \tilde a_k(x,\tau), \quad x(s) = 0,
\end{eqnarray}
respectively. Changing the variables $(x,\tau) \to (y,\theta)$ by
 $x = y, \tau = \omega_j(y,1,\theta),$ the equations (\ref{xjk}) and (\ref{xjk1}) can be transformed as follows:
\begin{eqnarray} \label{xjk2}
 \frac{dy}{d\theta} = \frac{a_j(y,\tau) a_k(y,\tau)\partial_3 \omega_j(y,1,\theta)}{a_j(y,\tau) - a_k(y,\tau)}\Biggl|_{\tau = \omega_j(y,1,\theta)}
\end{eqnarray}
and
\begin{eqnarray} \label{xjk3}
 \frac{dy}{d\theta} = \frac{a_j(y,\tau) (a_k(y,\tau) + \tilde a_k(y,\tau))\partial_3 \omega_j(y,1,\theta)}
 {a_j(y,\tau) - a_k(y,\tau) - \tilde a_k(y,\tau)}\Biggl|_{\tau = \omega_j(y,1,\theta)},
\end{eqnarray}
respectively.
Write $\theta_0 = \omega_j(1,0,s)$ and estimate the difference of solutions $y_1(\theta)$ and $y_2(\theta)$
with the same initial values $y_1(\theta_0) = y_2(\theta_0) = 0$
to the equations (\ref{xjk2}) and  (\ref{xjk3}),
respectively. We have
\begin{eqnarray*}
  & & \frac{dy_1}{d\theta} -  \frac{dy_2}{d\theta} =
  -\frac{a_j^2(y_2,\tau_2)\tilde a_k(y_2,\tau_2)\partial_3 \omega_j(y_2,1,\tau_2)}
 {(a_j(y_2,\tau_2) - a_k(y_2,\tau_2) - \tilde a_k(y_2,\tau_2))(a_j(y_2,\tau_2) - a_k(y_2,\tau_2))}  \\
  & & +\frac{a_j(y_1,\tau_1) a_k(y_1,\tau_1)\partial_3 \omega_j(y_1,1,\theta)}{a_j(y_1,\tau_1) - a_k(y_1,\tau_1)} -
 \frac{a_j(y_2,\tau_2) a_k(y_2,\tau_2)\partial_3 \omega_j(y_2,1,\theta)}
 {a_j(y_2,\tau_2) - a_k(y_2,\tau_2)},  
\end{eqnarray*}
where $\tau_1 = \omega_j(y_1,1,\theta), \ \tau_2 = \omega_j(y_2,1,\theta).$
By (\ref{eq:h11}), $|a_j - a_k - \tilde a_k| \ge \Lambda_0$.
Using the  Gronwall's argument, we derive
\begin{eqnarray*}
 |y_1(\theta) - y_2(\theta)| \le C_5 \|\tilde a\|_{BC(\Pi;\M_n)}, \ \theta \in [s, s + 3d],
\end{eqnarray*}
where positive constant $C_5$ does not depend on $s$.
Geometrically, $y_1(\theta)$ is the abscissa of the point where characteristics $\omega_j(y,1,\theta)$ and
$\sigma_k(\tau,0,s)$ intersect. Given $(x,t)$, write
$\theta_1 = \omega_j(1,x,t).$
Then $y_1(\theta_1) = x_{jk}(0,x,t)$ and $y_2(\theta_1) = \tilde x_{jk}(0,x,t).$
This  yields the desired estimate (\ref{ik10})  for all $\|\tilde a\|_{BC(\Pi;\M_n)}  \le \varepsilon.$

Now, using the mean value theorem and the exponential estimate (\ref{k3}),
we easily get
$$|I_{2k2}(x,t )| \le C_6 \tilde\beta(\varepsilon)e^{3d\nu}\|\varphi\|_{L^2((0,1);\mathbb R^n)} \le \beta_3(\varepsilon)\|\varphi\|_{L^2((0,1);\mathbb R^n)},$$
where $C_6$ does not depend on $\eps$, $\vphi$, $k$ and $s$, while the function
$\be_3:[0,1]\to\R$
 approaches zero as $\eps\to 0$.
 
Returning to (\ref{ik11}), we proceed with the summand 
 \begin{eqnarray*}
& & |I_{2k3}(x,t )| = \Bigl|\int_{\max\{x_{jk}(0,x,t),\tilde x_{jk}(0,x,t)\}}^x d_{j}(\xi,x,t) b_{jk}(\xi,\omega_j(\xi))
c_k(x_k(\xi,\omega_j(\xi)),\xi,\omega_j(\xi)) \\
& & \times (\tilde x_k(\xi,\omega_j(\xi)) - x_k(\xi,\omega_j(\xi)))  \\
& & \times \int_0^1 \varphi'_k(\gamma \tilde x_k(\xi,\omega_j(\xi)) + (1-\gamma)x_k(\xi,\omega_j(\xi))) d\gamma d\xi\Bigl|.
\end{eqnarray*}
Using the notation (see (\ref{ik53}))
\begin{eqnarray}
& & \rho_k(\xi,x,t,\gamma) = \frac{d}{d\xi}\Bigl( \gamma \tilde x_k(\xi, \omega_j(\xi)) + (1-\gamma)x_k(\xi, \omega_j(\xi))\Bigl)  \nonumber\\[2mm]
& & = -\gamma \left[\frac{a_k + \tilde a_k - a_j}{a_j(a_k + \tilde a_k)}\right]
(\xi, \omega_j(\xi)) \nonumber\\[2mm]
& & \times \partial_3 \tilde\omega_k(\tilde x_k(\xi, \omega_j(\xi)),\xi,\omega_j(\xi))
[a_k + \tilde a_k](\tilde x_k(\xi, \omega_j(\xi)), \tilde\omega_k(\tilde x_k(\xi, \omega_j(\xi)),\xi,\omega_j(\xi)))  \nonumber\\[2mm]
& & - (1-\gamma)\left[\frac{a_k - a_j}{a_j a_k}\right]
(\xi, \omega_j(\xi)) \nonumber\\[2mm]
& & \times \partial_3 \omega_k( x_k(\xi, \omega_j(\xi)),\xi,\omega_j(\xi))
a_k(x_k(\xi, \omega_j(\xi)),\omega_k(x_k(\xi, \omega_j(\xi)),\xi,\omega_j(\xi))), \label{ik14}
\end{eqnarray}
we get
 \begin{eqnarray*}
& & \varphi'_k(\gamma \tilde x_k(\xi, \omega_j(\xi)) + (1-\gamma)x_k(\xi, \omega_j(\xi)))  \\
& & =  \rho_k^{-1}(\xi,x,t,\gamma)\frac{d}{d\xi}\varphi_k(\gamma \tilde x_k(\xi, \omega_j(\xi)) + (1-\gamma)x_k(\xi, \omega_j(\xi))).
\end{eqnarray*}
Notice that $\rho_k(\xi,x,t,\gamma) \not= 0$ for all $\xi,x \in [0,1], t \in [s,s+3d],$ $\gamma \in[0,1],$ and $k \le n.$
Hence,
\begin{eqnarray*}
& & |I_{2k3}(x,t )| \le \Bigl|\int_{\max\{x_{jk}(0,x,t),\tilde x_{jk}(0,x,t)\}}^x \frac{d}{d\xi}\Bigl[d_{j}(\xi,x,t) b_{jk}(\xi,\omega_j(\xi))
c_k(x_k(\xi,\omega_j(\xi)),\xi,\omega_j(\xi))  \\
& & \times (\tilde x_k(\xi,\omega_j(\xi)) - x_k(\xi,\omega_j(\xi)))  \\
& & \times \int_0^1 \rho_k^{-1}(\xi,x,t,\gamma)
\varphi_k(\gamma \tilde x_k(\xi,\omega_j(\xi)) + (1-\gamma)x_k(\xi,\omega_j(\xi))) d\gamma\Bigl] d\xi\Bigl|  \\
& & + \Bigl|\int_{\max\{x_{jk}(0,x,t),\tilde x_{jk}(0,x,t)\}}^{x} \frac{d}{d\xi} \Bigl[d_{j}(\xi,x,t) b_{jk}(\xi,\omega_j(\xi))
c_k(x_k(\xi,\omega_j(\xi)),\xi,\omega_j(\xi))  \\
& & \times (\tilde x_k(\xi,\omega_j(\xi)) - x_k(\xi,\omega_j(\xi))) \rho_k^{-1}(\xi,x,t,\gamma)\Bigl]  \\
& & \times \varphi_k( \gamma \tilde x_k(\xi,\omega_j(\xi)) + (1-\gamma)x_k(\xi,\omega_j(\xi)))d\gamma d\xi \Bigl| = |I_{2k31}(x,t)| + |I_{2k32}(x,t)|.
\end{eqnarray*}
Further,
\begin{eqnarray*}
 & & |I_{2k31}(x,t )| = \Bigl| \frac{1}{a_j(x,t)} b_{jk}(x,t) c_k(x_k(x,t),x,t)( \tilde x_k(x,t) - x_k(x,t))  \\
 & & \times \int_0^1 \rho_k^{-1}(x,x,t,\gamma)\varphi_k(\gamma \tilde x_k(x,t) + (1-\gamma)x_k(x,t))d\gamma  \\
 & & - d_j(y,x,t) b_{jk}(y,\omega_j(y)) c_k(x_k(y,\omega_j(y)),y,\omega_j(y))  \\
 & & \times ( \tilde x_k(y,\omega_j(y)) - x_k(y,\omega_j(y)))  \\
 & & \times \int_0^1 \rho_k^{-1}(y,x,t,\gamma) \varphi_k(\gamma \tilde x_k(y,\omega_j(y)) + (1-\gamma)x_k(y,\omega_j(y)))d\gamma\Bigl|,
\end{eqnarray*}
where $y = y(x,t) = \max\{x_{jk}(0,x,t),\tilde x_{jk}(0,x,t)\}.$ Next,
\begin{eqnarray*}
 & & \max_{s\le t \le s+3d}\|I_{2k31}(\cdot,t)\|^2_{L^2((0,1);\mathbb R^n)} \le
  2\max_{s\le t \le s+3d} \int_0^1 \Bigl[ \int_0^1  \frac{b_{jk}(x,t)}{a_j(x,t)} c_k(x_k(x,t),x,t) \\
  & & \times  (\tilde x_k(x,t) - x_k(x,t))  \rho_k^{-1}(x,x,t,\gamma) \varphi_k(\gamma \tilde x_k(x,t) + (1-\gamma)x_k(x,t)) d\gamma \Bigl]^2 dx \\
   & & + 2\max_{s\le t \le s+3d}\int_0^1\Bigl[ d_{j}(y,x,t) b_{jk}(y,\omega_j(y)) c_k(x_k(y,\omega_j(y)),y,\omega_j(y))
\\
  & & \times  (\tilde x_k(y,\omega_j(y)) - x_k(y,\omega_j(y))) \\
  & & \times \int_0^1  \rho_k^{-1}(y,x,t,\gamma) \varphi_k(\gamma \tilde x_k(y,\omega_j(y)) +
  (1-\gamma)x_k(y,\omega_j(y))) d\gamma \Bigl]^2 dx,
\end{eqnarray*}
Changing the variables
\begin{eqnarray} \label{ik12}
 x \to z = z(x,t,\gamma) = \gamma \tilde x_{k}(x,t) + (1 - \gamma)x_{k}(x,t)
\end{eqnarray}
and
\begin{eqnarray} \label{ik13}
 x \to \eta = \eta(x,t,\gamma) = \gamma \tilde x_{k}(y,\omega_{j}(y)) + (1 - \gamma)x_{k}(y,\omega_{j}(y))
\end{eqnarray}
in the first and in the second summands, respectively, we get
\begin{eqnarray}
  & & \max_{s\le t \le s+3d}\|I_{2k31}(\cdot,t)\|_{L^2((0,1);\mathbb R^n)}^2  \nonumber\\[2mm]
  & & \le 2\tilde\beta^2(\varepsilon) \max_{s\le t \le s+3d} \int_{\gamma \tilde x_{k}(0,t) +
  (1 - \gamma)x_{k}(0,t)}^{\gamma \tilde x_{k}(1,t) + (1 - \gamma)x_{k}(1,t)}
  \Biggl[ \int_0^1 \frac{1}{a_j(Z,t)} b_{jk}(Z,t)  \nonumber\\[2mm]
  & & \times c_k(x_k(Z,t),Z,t) \rho_k^{-1}(Z,Z,t,\gamma)
\varphi_k(z)d\gamma \Biggl]^2\partial_1 Z(z,t,\gamma) dz  \nonumber\\[2mm]
 & & + 2\tilde\beta^2(\varepsilon)\max_{s\le t \le s+3d} \int_{\gamma \tilde x_k(y(0,t),\omega_j(y(0,t),0,t)) + (1-\gamma) x_k(y(0,t),\omega_j(y(0,t),0,t))}
  ^{\gamma \tilde x_k(y(1,t),\omega_j(y(1,t),1,t)) + (1-\gamma) x_k(y(1,t),\omega_j(y(1,t),1,t))}  \Bigg[d_j(y(Y,t),Y,t)\nonumber\\[2mm]
 & & \times b_{jk}(y(Y,t),\omega_j(y(Y,t),Y,t)) c_k\left(x_k(y(Y,t),\omega_j(y(Y,t),Y,t)\right),y(Y,t),\omega_j(y(Y,t),Y,t)) \nonumber\\
  & & \times \int_0^1 \rho_k^{-1}(y(Y,t),Y,t,\gamma) \varphi_k(\eta)d\gamma\Biggl]^2 \partial_1Y(\eta,t,\gamma)d\eta, \label{ik131}
\end{eqnarray}
where $Z = Z(z,t,\gamma)$ and $Y = Y(\eta,t,\gamma)$ are inverses to (\ref{ik12}) and (\ref{ik13}), respectively.
Moreover, similarly to (\ref{ik14}),
\begin{eqnarray*}
 & & \partial_1 z(x,t,\gamma) = \frac{d}{dx}\left[\gamma \tilde x_k(x,t) + (1-\gamma)x_k(x,t)\right] \\[2mm]
 & & = -\gamma \partial_2\tilde \omega_k(\tilde x_k(x,t)) \left[a_k+\tilde a_k\right](\tilde x_k(x,t),\tilde\omega_k(\tilde x_k(x,t)))  \\[2mm]
 & & -(\gamma -1) \partial_2 \omega_k( x_k(x,t)) a_k(x_k(x,t),\omega_k(x_k(x,t)))
\end{eqnarray*}
and
\begin{eqnarray*}
 & & \partial_1 \eta(x,t,\gamma) = \frac{d}{dx}\Bigl(\gamma \tilde x_k(y,\omega_j(y)) + (1-\gamma)x_k(y,\omega_j(y))\Bigl)  \\[2mm]
 & & = \Biggl( -\gamma \left[\frac{a_k+\tilde a_k - a_j}{a_j(a_k+\tilde a_k)}\right]
 ( y,\omega_j(y)) \\[2mm]
 & & \times \partial_3 \tilde\omega_k( \tilde x_k(y,\omega_j(y)),y,\omega_j(y))\left[a_k+\tilde a_k\right]( \tilde x_k(y,\omega_j(y)),
 \tilde \omega_k( \tilde x_k(y,\omega_j(y)),y,\omega_j(y)))  \\[2mm]
 & & -(1-\gamma)\left[\frac{a_k- a_j}{a_j a_k}\right](y,\omega_j(y)) \\[2mm]
 & & \times \partial_3 \omega_k( x_k(y,\omega_j(y)),y,\omega_j(y))a_k(x_k(y,\omega_j(y)),
  \omega_k( x_k(y,\omega_j(y)),y,\omega_j(y)))\Biggl)\frac{\partial y(x,t)}{\partial x},
\end{eqnarray*}
where, on the account of \reff{2.60},
\begin{eqnarray*}
  \frac{\partial y(x,t)}{\partial x} =
  \left\{\begin{array}{l}
                  \displaystyle\frac{\partial x_{jk}(0,x,t)}{\partial x} \ \ if \ y = x_{jk}(0,x,t), \\[4mm]
   \displaystyle\frac{\partial \tilde x_{jk}(0,x,t)}{\partial x} \ \ if \ y = \tilde x_{jk}(0,x,t),
               \end{array}\right.
\end{eqnarray*}
\begin{eqnarray*}
 & &   \displaystyle\frac{\partial x_{jk}(0,x,t)}{\partial x} = \frac{\partial_2 \omega_j(x_{jk}(0,x,t),x,t)}
 {\partial_1 \omega_k(x_{jk}(0,x,t),0,s) - \partial_1 \omega_j(x_{jk}(0,x,t),x,t)} = \partial_2 \omega_j(x_{jk}(0,x,t),x,t)  \\[2mm]
 & & \times \displaystyle\frac{  a_j(x_{jk}(0,x,t),\omega_j(x_{jk}(0,x,t),x,t)) a_k(x_{jk}(0,x,t),\omega_k(x_{jk}(0,x,t),0,s))}
 {a_j(x_{j k}(0,x,t), \omega_j(x_{jk}(0,x,t),x,t)) -
 a_k(x_{j k}(0,x,t), \omega_k(x_{jk}(0,x,t),0,s))},
 \end{eqnarray*}
Due to (\ref{dx}) and \reff{eq:h11}, the right hand side is bounded uniformly in
$(x,t)\in\overline\Pi$ and $s\in\R$. A similar argument is applied also to $\frac{\partial \tilde x_{jk}(0,x,t)}{\partial x}$.

As it now easily follows from (\ref{ik131}),
\begin{eqnarray*}
  \max_{s\le t \le s+3d}\|I_{2k31}(\cdot,t)\|_{L^2((0,1);\mathbb R^n)} \le  \beta_4(\varepsilon)\|\varphi\|_{L^2((0,1);\mathbb R^n)},
\end{eqnarray*}
where the function $\beta_4(\varepsilon)$ approaches zero as $\eps\to 0$.

The summand $I_{2k32}$ can be treated similarly, this time using the change of variables
\begin{eqnarray*}
 \xi \to \eta(\xi,x,t,\gamma)= \gamma\tilde x_{k}(\xi,\omega_j(\xi)) + (1-\gamma)x_{k}(\xi,\omega_j(\xi)).
\end{eqnarray*}
Therewith we  complete estimation of  the summand $I_{2k3}$.

Returning to the formula (\ref{ik11}) again, we are left with the summand $I_{2k1},$ for which we will use the same argument as for
$I_{2k3}.$ Indeed, the mean value theorem yields the representation
\begin{eqnarray*}
 &&I_{2k1}(x,t) = \int_{x_j(x,t)}^{\min\{x_{jk}(0,x,t),\tilde x_{jk}(0,x,t)\}} d_i(\xi,x,t)
b_{jk}(\xi,\omega_j(\xi))c_k(x_k(\xi,\omega_j(\xi)),\xi,\omega_j(\xi)) \nonumber\\
\nonumber\\ & & \times \left[
\tilde\omega_k(\tilde x_k(\xi,\omega_j(\xi)),\xi,\omega_j(\xi))-\omega_k(x_k(\xi,\omega_j(\xi)),\xi,\omega_j(\xi))
\right]\nonumber\\& & \times
\int_0^1\frac{d}{d\tau}\left[\sum\limits_ {i=m+1}^{n}p_{ki}v_i(0,\tau(\ga,\xi))+\sum\limits_ {i=1}^{m}
p_{ki}v_i(1,\tau(\ga,\xi))\right]\,d\ga d\xi=\sum\limits_ {i=1}^{n}I_{2k1i}(x,t),
\end{eqnarray*}
where $\tau(\ga,\xi)=\ga\tilde\omega_k(\tilde x_k(\xi,\omega_j(\xi)),\xi,\omega_j(\xi))+(1-\ga)\omega_k(x_k(\xi,\omega_j(\xi)),\xi,\omega_j(\xi))$. Fix $i\le m$ (for $m+1\le i\le n$ we use the same argument) and proceed with the summand
$I_{2k1i}$. Similarly to the above, first note the identity
\begin{eqnarray*}
 &&
\frac{d}{d\xi}v_i(1,\tau(\ga,\xi))=\frac{d}{d\tau}v_i(1,\tau(\ga,\xi))\d_\xi\tau(\ga,\xi),
\end{eqnarray*}
and, hence,
\begin{eqnarray*}
 &&
\frac{d}{d\tau}v_i(1,\tau(\ga,\xi))=\left[\d_\xi\tau(\ga,\xi)\right]^{-1}\frac{d}{d\xi}v_i(1,\tau(\ga,\xi)).
\end{eqnarray*}
Substituting the latter into the summand $I_{2k1i}$ and integrating by parts, we easily arrive at the desired estimate for
this summand.

Summarizing, the final estimate for $I_{2}$ is as follows:
\begin{eqnarray*}
  \max_{s\le t \le s+3d}\|I_{2}(\cdot,t)\|_{L^2((0,1);\mathbb R^n)} \le  \beta_5(\varepsilon)\|\varphi\|_{L^2((0,1);\mathbb R^n)},
\end{eqnarray*}
where the function $\beta_5(\varepsilon)$ approaches zero as $\eps\to 0$. This means that we finish
with the upper bound  for the first summand in \reff{rep_(u-v)4}.

The proof is therewith complete.
\end{proof}

\section{Abstract setting}\label{sec:abstr}
\renewcommand{\theequation}{{\thesection}.\arabic{equation}}
\setcounter{equation}{0}

\subsection{Formulation of the abstract problem}

Let us write down  the  linear nonhomogeneous problem
(\ref{eq:vg}), (\ref{eq:3}), (\ref{eq:in})
in the form of an abstract evolution equation in   $L^2\left((0,1);\R^n\right)$.
As usually, by $H^1\left((0,1);\R^n\right)$ we denote the Sobolev space of all
functions $u\in L^2\left((0,1);\R^n\right)$ whose distributional derivative 
$u^\prime$ is in $L^2\left((0,1);\R^n\right)$.
Denote
$$
v(t)=(u_{1}(0,t),\dots u_m(0,t),u_{m+1}(1,t),\dots u_n(1,t))
$$
and   define a one-parameter family of operators
$\A(t)$  from $L^2\left((0,1);\R^n\right)$
to $L^2\left((0,1);\R^n\right)$ for each $t\in \R$
by
$$(\A(t)u)(x)=\left(-a(x,t)\frac{\d}{\d x} - b(x,t)\right)u,$$
with the domain 
$$
\begin{array}{ll}
\displaystyle
 D(\A(t))=\left\{u\in H^1\left((0,1);\R^n\right)\,:\,\,v(t)=
(Ru)(t)\right\}\subset L^2\left((0,1);\R^n\right),
\end{array}
$$
where the operator   $R$  is given by  (\ref{eq:R}). Note that $D(\A(t))=D$
is independent of $t$.

Writing  $u(t)$ and  $f(t)$, we mean  bounded and continuous maps $u:\R\to L^2((0,1);\R^n)$  and
$f:\R\to L^2((0,1);\R^n)$ defined by
$[u(t)](x)=u(x,t)$ and $[f(t)](x)=f(x,t)$, respectively.
In this notation, the  problem (\ref{eq:vg}), (\ref{eq:3}), (\ref{eq:in})  can be written in the abstract form
\beq\label{unperturb}
\frac{d}{dt}u=\A(t)u +f(t), \quad u(s)=\varphi\in L^2((0,1);\R^n).
\ee

Given $\vphi\in D,$  a function $u\in C\left([s,\infty);L^2((0,1);\R^n)\right)$ is called a {\it classical solution to
the abstract problem \reff{unperturb}} if
$u$ is continuously differentiable in $L^2((0,1);\R^n)$ for $t>s$, $u(t)\in D$ for $t>s$ and
\reff{unperturb}  is satisfied in $L^2((0,1);\R^n)$.

\subsection{
Equivalence between the original and the abstract problem settings
}\label{sec:equiv}

Here we show that, if $\vphi\in D$, then the
$L^2$-generalized solution to the problem
(\ref{eq:vg}), (\ref{eq:3}), (\ref{eq:in}) is a classical solution to the abstract problem \reff{unperturb} and vice versa. 

\begin{thm}\label{distr-clas}
Suppose that $a, b \in BC^1(\overline\Pi;\M_n)$,   $f \in BC^1(\overline\Pi;\R_n)$, and  
 the condition \reff{eq:h11}  is fulfilled.
If  $\vphi\in D$ and $u(x,t)$ is the 
$L^2$-generalized solution to the problem (\ref{eq:vg}), (\ref{eq:3}), (\ref{eq:in}), then 
  the function
$u(t)$ such that $[u(t)](x):=u(x,t)$,
is a classical solution  to the abstract problem \reff{unperturb}.
Vice versa, if $u(t)$ is a classical solution  to the abstract problem \reff{unperturb},
then $u(x,t):=[u(t)](x)$ is an $L^2$-generalized solution to the problem
(\ref{eq:vg}), (\ref{eq:3}), (\ref{eq:in}).
\end{thm}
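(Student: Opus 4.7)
The plan is to identify both solution concepts with the mild-solution formula $u(t) = U(t,s)\vphi + \int_s^t U(t,\tau)f(\tau)\,d\tau$ for the evolution family $U$ from Theorem~\ref{evol}, and then appeal to the uniqueness of the $L^2$-generalized solution furnished by Theorem~\ref{evol_g}.

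First, suppose $u$ is a classical abstract solution. The plan is the standard Duhamel computation: fix $t$, set $V(r) = U(t,r)u(r)$ for $r \in [s,t]$, use $u(r) \in D$ and $\d_r u(r) = \A(r)u(r) + f(r)$ in $L^2$ together with the backward differentiability of $r \mapsto U(t,r)$ on $D$ to obtain $\frac{d}{dr}V(r) = U(t,r)f(r)$, and integrate from $s$ to $t$ to recover the mild formula. The same representation characterizes the $L^2$-generalized solution: approximating $\vphi$ by $\vphi^l \in C_0^1([0,1];\R^n)$ (so $\vphi^l$ trivially lies in $D$ and Theorem~\ref{km}$(\io)$ yields piecewise-$C^1$ PDE solutions $u^l$), each $u^l$ is a classical PDE solution and therefore satisfies the mild formula for $\vphi^l$; passing to the $L^2$-limit using the exponential bound \reff{k3} and Definition~\ref{L2_g} identifies the $L^2$-generalized solution with the same Duhamel expression. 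This establishes abstract-classical $\Rightarrow$ $L^2$-generalized.

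For the reverse direction, when $\vphi \in D$, the plan is to show that the mild representation $u(t) = U(t,s)\vphi + \int_s^t U(t,\tau)f(\tau)\,d\tau$ actually has the required regularity to be a classical abstract solution. To this end, approximate $\vphi$ by a sequence $\vphi^l \in C^1([0,1];\R^n)$ satisfying both the zero-order compatibility (i.e.\ $\vphi^l \in D$) and the first-order compatibility condition of Theorem~\ref{km}$(\io\io)$, with $\vphi^l \to \vphi$ in $H^1$; such $\vphi^l$ are constructed by mollification of $\vphi$ followed by a cutoff-supported boundary correction adjusting $\vphi^l$ and $\d_x\vphi^l$ at $x=0,1$, which is possible because the two compatibility conditions form a finite linear system in the boundary values. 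Theorem~\ref{km}$(\io\io)$ then delivers classical $C^1$ solutions $u^l$, each of which is plainly a classical abstract solution. An $H^1$-energy estimate of the form $\|u^l(\cdot,t) - u^k(\cdot,t)\|_{H^1} \le C e^{C(t-s)}\|\vphi^l - \vphi^k\|_{H^1}$, obtained by testing the PDE for the difference with $u^l - u^k$ and with $\d_x(u^l - u^k)$ and controlling the boundary terms via the reflection form of \reff{eq:3} and the hyperbolicity inequalities \reff{eq:h11}, yields Cauchy convergence in $H^1$ uniformly on compact time intervals. Its limit is the $L^2$-generalized solution $u$, and $u(t) \in D$ (closedness of the trace operator) together with the convergence $\d_t u^l \to \d_t u$ in $L^2$ pass to the limit, giving the required properties of a classical abstract solution.

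The main obstacle is the construction of the approximating sequence $\vphi^l$: simultaneously enforcing the zero-order and the first-order compatibility requires a joint boundary adjustment of $\vphi^l$ and $\d_x\vphi^l$ that is compatible with $f(\cdot,s)$ and does not destroy $H^1$-convergence. Closely related is the $H^1$ energy estimate, whose boundary terms at $x=0,1$ must be handled by exploiting the precise reflection structure of \reff{eq:3} together with \reff{eq:h11}, rather than any abstract dissipativity; standard symmetrizer arguments for hyperbolic systems apply once the reflection coefficients have been inserted.
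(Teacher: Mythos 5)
Your route is genuinely different from the paper's, but it contains a concrete gap at the step where you claim that Theorem~\ref{km}$(\io\io)$ delivers classical $C^1$ solutions $u^l$ to the nonhomogeneous problem (\ref{eq:vg}), (\ref{eq:3}), (\ref{eq:in}). Theorem~\ref{km}$(\io\io)$ is stated only for the \emph{homogeneous} system (\ref{eq:1u}); for the nonhomogeneous system (\ref{eq:vg}), the paper provides only part $(\io)$, which under the zero-order compatibility condition yields a \emph{piecewise} continuously differentiable solution, not a $C^1$ one. Moreover, the first-order compatibility condition for (\ref{eq:vg}) must involve $f(\cdot,s)$, which is not the condition written in Theorem~\ref{km}$(\io\io)$. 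Thus, even if you succeed in building approximants $\vphi^l\to\vphi$ in $H^1$ that satisfy both compatibility conditions, you do not have a theorem in the paper producing the $C^1$ nonhomogeneous solutions your $H^1$-energy estimate presupposes. The paper sidesteps this precisely by \emph{not} enforcing first-order compatibility: it approximates $\vphi$ by $\vphi^l=\vphi^l_0+\tilde\vphi$ with $\vphi^l_0\in C_0^\infty$ (which satisfy only the zero-order condition), works with the resulting piecewise $C^1$ solutions, and establishes the $H^1$-estimate directly for those in Lemma~\ref{H1}. That lemma is the real content: its proof carefully handles the discontinuity set $J$ of $\d_t u$ along characteristic curves, using a distributional integration by parts and a dissipativity normalization. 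Your sketch of the $H^1$-energy estimate (``testing the PDE with $u^l-u^k$ and $\d_x(u^l-u^k)$'') assumes $C^1$ smoothness and does not address the piecewise structure, so it does not substitute for Lemma~\ref{H1}.

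A second, smaller gap: your Duhamel step ``set $V(r)=U(t,r)u(r)$'' requires backward differentiability of $r\mapsto U(t,r)\phi$ for $\phi\in D$. The evolution family $U(t,s)$ is defined in the paper only through $L^2$-generalized solutions and Theorem~\ref{evol}; the forward differentiability of $t\mapsto U(t,s)\vphi$ for $\vphi\in D$ is established in Lemma~\ref{cont_reg}$(\io\io)$, but the backward property is neither stated nor proved, and it is not automatic for arbitrary strongly continuous evolution families. The paper avoids this entirely by going through the chain Lemma~\ref{L2-cont} ($L^2$-generalized solution $=$ continuous integral-equation solution), Lemma~\ref{cont-distr} (continuous solution $=$ distributional solution) and Lemma~\ref{cont_reg} (extra regularity in $C([s,t];H^1)\cap C^1([s,t];L^2)$ from Lemma~\ref{H1}), then closing the converse direction by uniqueness. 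If you want to keep a mild-formula argument you would need to first prove the backward differentiability of $U$, and you would need a nonhomogeneous analogue of Theorem~\ref{km}$(\io\io)$ (or to use piecewise $C^1$ solutions plus a Lemma~\ref{H1}-style estimate that tolerates characteristic discontinuities).
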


The proof of the theorem is based on 
Lemmas \ref{H1}--\ref{cont-distr} below.

\begin{lemma}\label{H1}
Let the initial function $\varphi$ belongs to  $C^1([0,1];\R^n)$ and fulfills the zero order
compatibility conditions \reff{eq:nl1}.
Then there exist constants $K_2$ and $\nu_2$ such that
   the  piecewise continuously differentiable
 solution  $u$ to the problem  (\ref{eq:vg}), (\ref{eq:3}), (\ref{eq:in}) (ensured by Theorem~\ref{km}~$(\io)$)
fulfills the  estimate
\beq\label{eq:apr4}
\begin{array}{cc}
\|u(\cdot,t)\|_{H^1\left((0,1);\R^n\right)}+\|\d_tu(\cdot,t)\|_{L^2\left((0,1);\R^n\right)}\\
\le  K_2 e^{\nu_2(t-s)}\left(\|\vphi\|_{H^1((0,1);\R^n)}+
\sup\limits_{t\in\R}\|f(\cdot,t)\|_{L^2\left((0,1);\R^n\right)}+
\sup\limits_{t\in\R}\|\d_tf(\cdot,t)\|_{L^2\left((0,1);\R^n\right)}\right)
\end{array}
\ee
for all  $t\ge s$.
\end{lemma}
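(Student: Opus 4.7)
The strategy is to estimate $\partial_t u$ and $\partial_x u$ in $L^2$ separately and then combine via hyperbolicity. First I would formally differentiate \reff{eq:vg} and \reff{eq:3} in $t$. Setting $v=\partial_t u$, one obtains a hyperbolic system with the same principal part:
$$\partial_t v + a(x,t)\partial_x v + b(x,t)v = -\partial_t a \cdot \partial_x u - \partial_t b\cdot u + \partial_t f,$$
subject to the same reflection conditions \reff{eq:3} (with $v$ in place of $u$), and initial datum $v(x,s)=-a(x,s)\varphi'(x)-b(x,s)\varphi(x)+f(x,s)$, which lies in $L^2$ with norm bounded by $C(\|\varphi\|_{H^1}+\sup_t\|f(\cdot,t)\|_{L^2})$, thanks to the boundedness assumptions on $a,b$.

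Next I would perform an $L^2$ energy identity for $v$: multiplying the $j$-th equation by $v_j$, summing over $j$, and integrating over $(0,1)$ gives
$$\frac{1}{2}\frac{d}{dt}\|v(\cdot,t)\|_{L^2}^2 = -\frac{1}{2}\sum_{j=1}^n\left[a_j(x,t)v_j^2(x,t)\right]_{x=0}^{x=1} + \text{(interior terms)}.$$
The interior terms are estimated by $\|v\|_{L^2}^2$, $\|u\|_{L^2}^2$, $\|\partial_x u\|_{L^2}^2$, and $\|\partial_t f\|_{L^2}^2$ in a straightforward way. The boundary terms split according to the sign of $a_j$ into outgoing contributions (which have the favorable sign) and incoming contributions; the latter are controlled by the former by means of the differentiated boundary conditions, at worst up to a multiplicative constant depending on $\{p_{jk}\}$, which is harmless since we only seek an exponentially growing bound. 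To close the loop, I would use \reff{eq:vg} to express $\partial_x u = -a^{-1}(\partial_t u + bu - f)$; the uniform lower bound $|a_j|\ge\Lambda_0$ from \reff{eq:h11} then yields
$$\|\partial_x u(\cdot,t)\|_{L^2} \le \Lambda_0^{-1}\bigl(\|v(\cdot,t)\|_{L^2}+\|b\|_\infty\|u(\cdot,t)\|_{L^2}+\|f(\cdot,t)\|_{L^2}\bigr).$$
Combining with the already known $L^2$ bound on $u$ (Theorem \ref{evol_g} together with \reff{k3}), substituting the bound for $\|\partial_x u\|_{L^2}$ into the interior terms, and invoking Gronwall gives the claimed estimate \reff{eq:apr4}.

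The main obstacle is that, under only zero-order compatibility on $\varphi$, the function $u$ is merely piecewise $C^1$: the derivatives $\partial_t u$ and $\partial_x u$ may jump across the characteristic curves emanating from the corner points $(0,s)$ and $(1,s)$, and the formal calculations above are not a priori meaningful across these jumps. I would handle this by an approximation argument: pick a sequence $\varphi^l\in C^1([0,1];\R^n)$ satisfying both the zero- and first-order compatibility conditions of Theorem~\ref{km}$(\io\io)$ and converging to $\varphi$ in $H^1((0,1);\R^n)$; the corresponding classical solutions $u^l$ are genuinely $C^1$, so the energy identity above is rigorous. Carrying out the argument for each $u^l$ gives the estimate \reff{eq:apr4} with $u$ replaced by $u^l$, and since the right-hand side depends continuously on $\varphi^l$ in the $H^1$-topology and the left-hand side is weakly lower-semicontinuous in that topology, passing to the limit $l\to\infty$ yields the lemma.
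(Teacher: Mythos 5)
Your overall strategy — energy estimate for $v=\partial_t u$, recovery of $\partial_x u$ from the PDE via $|a_j|\ge\Lambda_0$, and Gronwall — coincides with the paper's. Your handling of the regularity obstruction (approximating $\varphi$ in $H^1$ by functions satisfying first-order compatibility, then passing to the limit via weak lower semicontinuity) differs from the paper's approach, which works directly with the piecewise-$C^1$ solution and justifies the integration by parts for $v$ by a space--time $L^2$ approximation argument with test functions $v^l$ in $C^1(\overline\Pi_s^t;\R^n)$; both routes are legitimate, and yours is arguably more transparent.

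However, there is a genuine gap in your treatment of the boundary terms. You assert that the incoming boundary contributions are "controlled by the former at worst up to a multiplicative constant depending on $\{p_{jk}\}$, which is harmless since we only seek an exponentially growing bound." This is not correct. The good and bad pieces are both \emph{boundary trace integrals}, which are not dominated by $\|v(\cdot,t)\|_{L^2((0,1))}$. If the multiplicative constant exceeds one (which happens as soon as some $|p_{jk}|>1$, e.g. already for a scalar reflection $u(0,t)=p\,u(1,t)$ with $|p|>1$), the net boundary contribution is a \emph{positive} multiple of an uncontrolled trace, and the Gronwall argument breaks down completely — the right-hand side of the differential inequality contains a term that cannot be absorbed. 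The paper closes this hole by a change of unknowns $u_j\mapsto \mu_j u_j$ with suitably chosen positive weights $\mu_j(x,t)$; this renormalizes the effective reflection matrix so that the transformed boundary quadratic form (\ref{god4}) is made \emph{non-negative definite}, after which the boundary terms drop out of the estimate entirely, at the cost of harmless extra lower-order terms in the equation. This weight-function step is essential and cannot be waved away; your proof needs it (or an equivalent device) to be correct.
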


\begin{proof} We proceed similarly to  \cite[Lemma 4.2]{KmLyul}.
Take a scalar product  of (\ref{eq:vg}) and $u$ in $\R^n$ and integrate the resulting system over the domain
$\Pi_s^t$. We get
$$
\int\int_{\Pi_s^t}\left(\frac{\partial }{\partial \theta}(u,u)+\frac{\partial }{\partial x}(au,u)\right)\,\, dx d\theta=
\int\int_{\Pi_s^t}\left(-2(bu,u)+(\partial_xa\,u,u)+2(f,u)\right)\, dx d\theta.
$$
Here and in what follows,  $(\cdot,\cdot)$ denotes
the scalar
product
 in $\R^n$.
  Applying  Green's formula to the left hand side, we obtain
\beq\label{god04}
\begin{array}{cc}
\displaystyle\|u(\cdot,t)\|_{L^2\left((0,1);\R^n\right)}^2+\int_s^t \left(\sum_{j=1}^n a_j(1,\theta)
 u_j^2(1,\theta)-\sum_{j=1}^n
a_j(0,\theta) u_j^2(0,\theta)\right)\,d\theta\\
\displaystyle=\|\vphi\|^2_{L^2\left((0,1);\R^n\right)}+\int\int_{\Pi_s^t}\left(-2(bu,u)+(\d_xa u,u)+2(f,u)\right)\,
dxd\theta.
\end{array}
\ee

Suppose first that the boundary  
conditions  (\ref{eq:3}) are dissipative, i.e.
\beq\label{god1}
\sum\limits_{j=1}^ma_j(1,t)u_j^2(1,t)-\sum\limits_{j=m+1}^na_j(0,t)u_j^2(0,t)+
\sum\limits_{j=m+1}^na_j(1,t)(Ru)_j^2(t)-\sum\limits_{j=1}^ma_j(0,t)(Ru)_j^2(t)\ge 0.
\ee
Then from  \reff{god04} we have
\beq\label{mu1}
\begin{array}{lr}
\displaystyle\|u(\cdot,t)\|_{L^2\left((0,1);\R^n\right)}^2\le
\displaystyle\|\vphi\|^2_{L^2\left((0,1);\R^n\right)}+\int\int_{\Pi_s^t}\left|\left((\d_xa-2b)u,u\right)+2(f,u)\right|\,dx d\theta\\
\le \displaystyle\|\vphi\|^2_{L^2\left((0,1);\R^n\right)}+(t-s)\sup\limits_{t\in\R}\|f(\cdot,t)\|^2_{L^2\left((0,1);\R^n\right)}+
\kappa_1\int_s^t \|u(\cdot,\theta)\|_{L^2\left((0,1);\R^n\right)}^2\, d\theta,
\end{array}
\ee
where
$\kappa_1=n \|\d_xa-2b\|_{BC(\Pi;\M_n)}+1$.

Let us show that the inequality \reff{god1}, 
supposed above, causes no loss of generality.
Let  $\mu_j(x,t)$ be arbitrary smooth functions
satisfying the conditions
$$
\inf\limits_{\overline\Pi_\tau}|\mu_j|>0, \quad \sup\limits_{\overline\Pi_\tau}|\mu_j|<\infty
\quad\mbox{ for all } j\le n.
$$
The change of each variable 
$u_j$ to $v_j=\mu_ju_j$
brings the  system (\ref{eq:1u})  to
\begin{equation}\label{god2}
\partial_tv_j  + a_j(x,t)\partial_xv_j - \frac{\d_t\mu_j+a_j(x,t)\d_x\mu_j}{\mu_j}v_j + 
\sum\limits_{k=1}^nb_{jk}\frac{\mu_j}{\mu_k}v_k = 0
\end{equation}
and the boundary conditions  (\ref{eq:3})  to
\beq\label{god3}
\begin{array}{l}
\displaystyle
v_j(0,t) = 
\sum\limits_{k=1}^mp_{jk}\frac{\mu_j(0,t)}{\mu_k(1,t)}v_k(1,t)+
\sum\limits_{k=m+1}^np_{jk}\frac{\mu_j(0,t)}{\mu_k(0,t)}v_k(0,t),
\quad 1\le j\le m,
\nonumber\\
\displaystyle
v_j(1,t) =\sum\limits_{k=1}^mp_{jk}\frac{\mu_j(1,t)}{\mu_k(1,t)}v_k(1,t)+
 \sum\limits_{k=m+1}^np_{jk}\frac{\mu_j(1,t)}{\mu_k(0,t)}v_k(0,t),
\quad   m< j\le n.
\nonumber
\end{array}
\ee
Note that the  resulting system
 \reff{god2}, \reff{god3} is of the type  (\ref{eq:1u}),  (\ref{eq:3}), and  the  
inequality \reff{god1} for it reads
\beq\label{god4}
\begin{array}{ll}
\displaystyle
\sum\limits_{j=1}^ma_j(1,t)v_j^2(1,t)-\sum\limits_{j=m+1}^na_j(0,t)v_j^2(0,t)\\
+\displaystyle\sum\limits_{j=m+1}^na_j(1,t)
\left[
\sum\limits_{k=m+1}^np_{jk}\frac{\mu_j(1,t)}{\mu_k(0,t)}v_k(0,t)+
\sum\limits_{k=1}^mp_{jk}\frac{\mu_j(1,t)}{\mu_k(1,t)}v_k(1,t)
\right]^2\\
\displaystyle
-\displaystyle\sum\limits_{j=1}^ma_j(0,t)
\left[
\sum\limits_{k=m+1}^np_{jk}\frac{\mu_j(0,t)}{\mu_k(0,t)}v_k(0,t)+
\sum\limits_{k=1}^mp_{jk}\frac{\mu_j(0,t)}{\mu_k(1,t)}v_k(1,t)
\right]^2
\ge 0.
\end{array}
\ee 
One can easily see that the functions  $\mu_j$ can be chosen so that  the left hand side of \reff{god4}  
is a non-negative definite quadratic form with respect to
 $v_j(1,t),$ $j\leq m$ and $v_j(0,t),$ $m+1\leq j\leq n$. This finishes the proof of the desired statement.

Further we will estimate $\|\d_tu(\cdot,t)\|_{L^2\left((0,1);\R^n\right)}$. With this aim,
set
$$
v=\d_tu,
$$
where  $\d_t$ denotes the distributional derivative. Formal differentiation
of (\ref{eq:vg}) and   (\ref{eq:3}) in  $t$ (in a distributional sense)
combined with (\ref{eq:vg}) gives
\begin{equation}\label{eq:112}
\partial_tv + a\partial_xv +
\left(b-a^{-1}\d_ta\right)v
+ \left(\d_tb-a^{-1}\d_ta \,b\right) u= \d_tf - a^{-1}\d_ta \,f 
\end{equation}
and
\begin{equation}\label{eq:115}
\begin{array}{rcl}
\displaystyle
v_j(0,t) &=&\displaystyle\sum\limits_{k=m+1}^np_{jk}v_k(0,t)+\sum\limits_{k=1}^m p_{jk}v_k(1,t)
\quad   1\le j\le m,
\\ [3mm]
\displaystyle
v_j(1,t)&=&\displaystyle
\sum\limits_{k=1}^mp_{jk}v_k(0,t)+ \sum\limits_{k=m+1}^np_{jk}v_k(1,t)
\quad   m< j\le n,
\end{array}
\end{equation}
all the equalities being understood in the distributional sense.
We endow the system \reff{eq:112}--\reff{eq:115} with
 initial conditions
\beq\label{kk1}
v(x,s)= -a(x,s)\vphi^\prime(x)-b(x,s)\vphi(x)+f(x,s).
\ee
Note that  \reff{eq:112}--\reff{kk1} is the initial-boundary value problem  with respect to $v$.

Fix an arbitrary $t\ge s$. As it follows from
Theorem \ref{km},   the vector-function $v$
is  piecewise continuous in $\overline\Pi_s^t$, with  a finite number of first order discontinuities (if any)
 along certain characteristic curves. The union of those characteristic curves
 will be denoted  by $J$. From the equation (\ref{eq:112}) we conclude that
the  generalized  directional derivatives
$$
z_j=\partial_tv_j + a_j\partial_xv_j
$$
are  continuous functions on
$\overline\Pi_s^t\setminus J$, with possible first order discontinuities on $J$.
This means  that the system  (\ref{eq:112}) is satisfied
pointwise  everywhere on  $\overline\Pi_s^t\setminus J$,
while the system (\ref{eq:115})  is satisfied everywhere on $[s,t]$ excepting a finite number of
points.

Consequently,  we have the following pointwise identity on $\Pi_s\setminus J$:
\begin{equation}\label{k2}
z + \left(b-a^{-1}\d_ta \right)v
+ \left(\d_tb-a^{-1}\d_ta \,b\right) u = \d_tf - a^{-1}\d_ta \,f .
\end{equation}

Multiplying \reff{k2} by $v$ and integrating the resulting system
over the domain
$\Pi_s^t$, we get
\begin{eqnarray}
& & \int\int_{\Pi_s^t}(z,v)\, dx d\theta 
=-\int\int_{\Pi_s^t}\left(\left(b-a^{-1}\d_ta \right)v
+ \left(\d_tb-a^{-1}\d_ta \,b \right) u,v\right)\, dx d\theta \nonumber\\
& & + \int\int_{\Pi_s^t}(\d_tf - a^{-1}\d_ta \,f,v)\, dx d\theta. \label{k33}
\end{eqnarray}

Since $C^1\left(\overline\Pi_s^t;\R^n\right)$ is densely embedded
into $L^2\left(\Pi_s^t;\R^n\right)$,
there is a   sequence
$v^l\in C^1(\overline\Pi_s^t;\R^n)$, $l\in\N$,
such that
\beq\label{k5}
\begin{array}{ll}
v^l\to v \mbox{  in  } L^2\left(\Pi_s^t;\R^n\right) \mbox{ as } l\to\infty
\end{array}
\ee
Let us show that
\beq\label{k51}
\begin{array}{ll}
\langle\d_tv^l+a\d_xv^l,\vphi\rangle_{L^2} \to \langle z,\vphi\rangle_{L^2} \mbox{  for all  }  
\vphi\in L^2\left(\Pi_s^t;\R^n\right),
\end{array}
\ee
where 
$\langle\cdot,\cdot\rangle_{L^2} : L^2\left(\Pi_s^t;\R^n\right)\times L^2\left(\Pi_s^t;\R^n\right)
\to \R$ denotes a scalar product in $L^2\left(\Pi_s^t;\R^n\right)$.
Indeed, due to \reff{k5}, for any $\vphi\in C_0^\infty(\overline\Pi_s^t;\R^n)$ we have
$$
\begin{array}{ll}
\langle\d_tv^l+a\d_xv^l,\vphi\rangle_{L^2} = 
-\langle v^l,\d_t\vphi+\d_x(a\vphi)\rangle_\D \\ [2mm]
\to -\langle v,\d_t\vphi+\d_x(a\vphi)\rangle_\D
=\langle\d_tv+a\d_xv,\vphi\rangle_\D=\langle z,\vphi\rangle_{L^2},
\end{array}
$$
where  
$\langle\cdot,\cdot\rangle_{\D} : \D^\prime\left(\Pi_s^t;\R^n\right)\times \D\left(\Pi_s^t;\R^n\right)
\to \R$ denotes a dual pairing in $\D^\prime$ and 
$\d_t$ and $\d_x$ are understood in a distributional sense.
As the space $C_0^\infty(\overline\Pi_s^t;\R^n)$ is dense in $L^2\left(\Pi_s^t;\R^n\right)$,
the desired assertion \reff{k51}  follows.

On the account of \reff{k51}, it holds
\begin{eqnarray*}
& & \int\int_{\Pi_s^t}(z,v)\, dx d\theta\\ 
& & =
\lim\limits_{r\to\infty}
\int\int_{\Pi_s^t}(\d_tv^r+a\d_xv^r,v)\, dx d\theta=
\lim\limits_{r\to\infty}
\lim\limits_{l\to\infty}\int\int_{\Pi_s^t}(\d_tv^r+a\d_xv^r,v^l)\, dx d\theta
\\
& & =
-\lim\limits_{r\to\infty}\lim\limits_{l\to\infty}\int\int_{\Pi_s^t}
\left(v^r,\d_tv^l+\d_x(av^l)\right)\, dx d\theta
+\lim\limits_{r\to\infty}\lim\limits_{l\to\infty}\int_0^1(v^r,v^l)|_{\theta=s}^t\, dx \\
& & +
\lim\limits_{r\to\infty}\lim\limits_{l\to\infty}\int_s^t \left(\sum_{j=1}^n a_j(1,\theta)
 v_j^r(1,\theta)v_j^l(1,\theta)-\sum_{j=1}^n
a_j(0,\theta) v_j^r(0,\theta)v_j^l(0,\theta)\right)\,d\theta\\
& & =
-\lim\limits_{l\to\infty}\int\int_{\Pi_s^t}
\left(v,\d_tv^l+a\d_xv^l\right)\, dx d\theta-\int\int_{\Pi_s^t}
\left(v,\d_xa\,v\right)\, dx d\theta\\
& &  +\int_0^1\sum_{j=1}^n\left[v_j^2(x,t)-v_j^2(x,s)\right]\, dx +
\int_s^t \left(\sum_{j=1}^n a_j(1,\theta)
 v_j^2(1,\theta)-\sum_{j=1}^n
a_j(0,\theta) v_j^2(0,\theta)\right)\,d\theta.
\end{eqnarray*}
Consequently,
\begin{eqnarray}\label{k9}
& & 2\int\int_{\Pi_s^t}(z,v)\, dx d\theta
=
-\int\int_{\Pi_s^t}
\left(v,\d_xa\,v\right)\, dx d\theta +\int_0^1\sum_{j=1}^n\left[v_j^2(x,t)-v_j^2(x,s)\right]\, dx \nonumber\\ [1mm]
& &  + \int_s^t \left(\sum_{j=1}^n a_j(1,\theta)
 v_j^2(1,\theta)-\sum_{j=1}^n
a_j(0,\theta) v_j^2(0,\theta)\right)\,d\theta.
\end{eqnarray}
Combining \reff{k9} with \reff{k33}, we have
\begin{eqnarray}\label{kk2}
& & \|v(\cdot,t)\|_{L^2((0,1);\R^n)}^2+\int_s^t \left(\sum_{j=1}^n a_j(1,\theta)
 v_j^2(1,\theta)-\sum_{j=1}^n
a_j(0,\theta) v_j^2(0,\theta)\right)\,d\theta \nonumber\\ [1mm]
& & =\left\|a(\cdot,s)\vphi^\prime+b(\cdot,s)\vphi-f(\cdot,s)\right\|^2_{L^2((0,1);\R^n)}
+\int\int_{\Pi_s^t}\left(\left[\d_xa-2b+2 a^{-1}\d_ta\right]v,v\right)\,dxd\theta \nonumber\\ [1mm]
& & -2\int\int_{\Pi_s^t}\left(\left[\d_tb-a^{-1}\d_ta\,b\right]u,v\right)\,dxd\theta
+2\int\int_{\Pi_s^t}(\d_tf - a^{-1}\d_ta \, f,v)\, dx d\theta.
\end{eqnarray}
We now use the disipativity condition \reff{god1} (similarly to the above, this causes no loss of generality).
The equation  \reff{kk2} yields
\begin{eqnarray}\label{eq:101}
& & \|v(\cdot,t)\|_{L^2((0,1);\R^n)}^2\, d\theta\le
\left\|a(\cdot,s)\vphi^\prime+b(\cdot,s)\vphi-f(\cdot,s)\right\|^2_{L^2((0,1);\R^n)} \nonumber\\[1mm]
& & + \kappa_2 \int_s^t\|f(\cdot,\theta)\|_{L^2((0,1);\R^n)}^2\, d\theta
+ \int_s^t\|\d_tf(\cdot,\theta)\|_{L^2((0,1);\R^n)}^2\, d\theta \nonumber\\[1mm]
& & +
\kappa_3 \left(\int_s^t\|u(\cdot,\theta)\|_{L^2((0,1);\R^n)}^2\, d\theta+\int_s^t\|v(\cdot,\theta)\|_{L^2((0,1);\R^n)}^2\, d\theta\right),
\end{eqnarray}
where the constants  $\kappa_2$ and $\kappa_3$ depend on $a$ and $b$ but not on $f$ and $\vphi$.

Furthermore, we sum up \reff{mu1} and \reff{eq:101}. After  applying the Gronwall's argument to the resulting
inequality, we get the bound
\begin{eqnarray*}
& & \|u(\cdot,t)\|_{L^2\left((0,1);\R^n\right)}+\|\d_tu(\cdot,t)\|_{L^2\left((0,1);\R^n\right)}
 \le K_{22}e^{\nu_{22}(t-s)} \Bigl(\|\vphi\|_{H^1((0,1);\R^n)}   \\
& & \quad + \sup\limits_{t\in\R}\|f(\cdot,t)\|_{L^2\left((0,1);\R^n\right)}+
\sup\limits_{t\in\R}\|\d_tf(\cdot,t)\|_{L^2\left((0,1);\R^n\right)}\Bigl)
\end{eqnarray*}
 for all  $t\ge s$ and  some positive constants $K_{22}$ and $\nu_{22}$.

A similar estimate for
$\|\d_xu(\cdot,t)\|_{L^2\left((0,1);\R^n\right)}$  easily follows from \reff{eq:vg}. This completes
 the proof of \reff{eq:apr4}.   
\end{proof}

\begin{lemma}\label{cont_reg}
Let  $\varphi\in D$. Then

$(\io)$
the continuous solution
$u$ to the problem  (\ref{eq:vg}), (\ref{eq:3}), (\ref{eq:in})
belongs to  $C([s,t],H^1\left((0,1);\R^n\right))$ and to $C^1([s,t],L^2\left((0,1);\R^n\right))$;

$(\io\io)$  the function $U(t,s)\vphi$ for $t\ge s$ is  continuously differentiable in $t$ and satisfies
the homogeneous abstract equation \reff{unperturb} in $L^2\left((0,1);\R^n\right)$.
\end{lemma}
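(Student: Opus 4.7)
The plan is to prove Lemma \ref{cont_reg} by approximation from smooth data, relying on the a priori estimate of Lemma \ref{H1}. Given $\varphi\in D$, I would first construct a sequence $\varphi^l\in C^1([0,1];\R^n)$ fulfilling the zero-order compatibility conditions \reff{eq:nl1} such that $\varphi^l\to\varphi$ in $H^1((0,1);\R^n)$. Since $D$ is the subspace of $H^1((0,1);\R^n)$ defined by the finite set of linear trace conditions from $R$, and since in one space dimension $H^1$ embeds continuously into $C$, standard mollification combined with a low-degree polynomial correction near the endpoints produces such a sequence. For each $l$, Theorem \ref{km}$(\io)$ then provides a piecewise continuously differentiable solution $u^l$ to \reff{eq:vg}, \reff{eq:3} with initial datum $\varphi^l$.

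Next, I would apply Lemma \ref{H1} to the difference $u^l-u^k$, which solves the same boundary value problem with zero right-hand side and initial datum $\varphi^l-\varphi^k$. This yields
$$
\|(u^l-u^k)(\cdot,\theta)\|_{H^1} + \|\partial_\theta (u^l-u^k)(\cdot,\theta)\|_{L^2} \le K_2 e^{\nu_2(\theta-s)}\|\varphi^l-\varphi^k\|_{H^1}
$$
uniformly for $\theta\in[s,t]$, so $\{u^l\}$ is Cauchy in $C([s,t];H^1((0,1);\R^n))$ and $\{\partial_\theta u^l\}$ is Cauchy in $C([s,t];L^2((0,1);\R^n))$, producing a limit in $C([s,t];H^1((0,1);\R^n))\cap C^1([s,t];L^2((0,1);\R^n))$. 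By Definition \ref{L2_g} and the uniqueness assertion of Theorem \ref{evol_g}, this limit coincides with the $L^2$-generalized solution $u$, which establishes part $(\io)$.

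For part $(\io\io)$, recall that $[U(\theta,s)\varphi](x)=u(x,\theta)$ by definition of the evolution family. Each $u^l(\cdot,\theta)$ lies in $D$, because its traces satisfy the boundary conditions \reff{eq:3}, and these linear trace conditions pass to the $H^1$-limit, so $u(\cdot,\theta)\in D$ for every $\theta\ge s$. Away from finitely many characteristic curves, $u^l$ satisfies the identity $\partial_\theta u^l=\A(\theta)u^l+f(\cdot,\theta)$ pointwise, hence in $L^2((0,1);\R^n)$; passing to the $L^2$-limit on both sides (the spatial derivative converges because $u^l\to u$ in $H^1$, and the time derivative converges by the Cauchy property established above) yields the abstract equation \reff{unperturb}, together with $u\in C^1([s,t];L^2((0,1);\R^n))$. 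The main obstacle I anticipate is the initial density step, since the compatibility conditions \reff{eq:nl1} couple all $n$ components simultaneously at both endpoints, so the approximating sequence must be crafted with care in order to simultaneously preserve these coupled conditions and approximate $\varphi$ in the $H^1$-norm.
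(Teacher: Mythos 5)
Your proposal follows essentially the same path as the paper: approximate $\varphi\in D$ in $H^1$ by $C^1$ data satisfying the zero-order compatibility conditions, invoke Theorem~\ref{km} to produce piecewise continuously differentiable solutions, apply the a priori estimate of Lemma~\ref{H1} to obtain a Cauchy sequence in $C([s,t];H^1)\cap C^1([s,t];L^2)$, identify the limit, and deduce part $(\io\io)$ from part $(\io)$ together with stability of the trace conditions under $H^1$-convergence.

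The one point you flag as an ``obstacle'' --- constructing the compatible approximating sequence --- is exactly where the paper is more explicit and cleaner than your sketch. Rather than ``mollification plus a polynomial correction,'' the paper subtracts the affine interpolant $\tilde\varphi(x)=\varphi(0)+x(\varphi(1)-\varphi(0))$, observes that $\varphi-\tilde\varphi\in H^1_0((0,1);\R^n)$, approximates this difference by $\varphi^l_0\in C^\infty_0([0,1];\R^n)$, and sets $\varphi^l=\varphi^l_0+\tilde\varphi$. Since $\varphi^l_0$ and its derivative vanish at the endpoints, $\varphi^l$ has \emph{exactly} the same boundary traces as $\varphi$, so all the coupled zero-order compatibility conditions are inherited automatically from $\varphi\in D$ with no case analysis. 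Your approach is salvageable (one can always add an affine corrector to restore the prescribed traces), but as written it leaves this step unresolved; the affine-interpolant decomposition is the device you were missing.

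Two small remarks. First, you are right that what is needed from Theorem~\ref{km} is part $(\io)$, for piecewise $C^1$ solutions under zero-order compatibility only; $\varphi^l$ as constructed above will generally \emph{not} satisfy the first-order compatibility conditions, so part $(\io\io)$ of Theorem~\ref{km} does not apply. Second, your identification of the Cauchy limit with the continuous solution is sound without circularly invoking Lemma~\ref{L2-cont}: since $u^l\to u$ in $C([s,t];H^1)$ and $H^1((0,1))\hookrightarrow C([0,1])$, the convergence is uniform on $[0,1]\times[s,t]$, and passing to the limit in the integral equations \reff{rep} shows $u$ is the continuous solution directly.
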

\begin{proof}
Define $\tilde\varphi(x) = \varphi(0) + x(\varphi(1) - \varphi(0))$ for $x \in [0,1].$
Note that $\varphi - \tilde\varphi \in H^1_0((0,1);\mathbb R^n)$, see \cite[p. 259]{Ev}. Therefore,
there exists a sequence
$\vphi^l_0\in C_0^\infty([0,1];\R^n)$ approaching $\varphi - \tilde\varphi$ in $H^1((0,1);\R^n)$.
It follows that the sequence $\varphi^l = \varphi^l_0 + \tilde\varphi$  approaches $\varphi$ in $H^1((0,1);\R^n)$.

By Theorem \ref{km} $(\io\io)$ and Lemma \ref{H1},
the piecewise continuously differentiable
 solution $u^l$ to the problem  (\ref{eq:vg}), (\ref{eq:3}), (\ref{eq:in}) with $\vphi^l$
in place of $\vphi$ satisfies the estimate  \reff{eq:apr4} with $u=u^l$ and $\vphi=\vphi^l$.
This entails the convergence
\beq\label{1k}
\max\limits_{s\le\tau\le t}\|u^m(\cdot,\tau)-u^l(\cdot,\tau)\|_{H^1\left((0,1);\R^n\right)}+
\max\limits_{s\le\tau\le t}\|\d_tu^m(\cdot,\tau)-\d_tu^l(\cdot,\tau)\|_{L^2\left((0,1);\R^n\right)}\to 0
\ee
as $m,l\to\infty$ for each $t>s$. 
Consequently, the sequence $u^l$ converges in
$C([s,t],H^1\left((0,1);\R^n\right))\cap C^1([s,t],L^2\left((0,1);\R^n\right))$. This proves Claim
$(\io)$.

Claim $(\io\io)$ now easily follows from  Claim $(\io)$.
\end{proof}

\begin{lemma}\label{L2-cont}
Let $\varphi\in D$. A function $u$ is
the $L^2$-generalized solution
to the problem  (\ref{eq:vg}), (\ref{eq:3}), (\ref{eq:in}) (see Theorem \ref{evol_g})
if and only if it is the
continuous solution
to the problem (\ref{eq:vg}), (\ref{eq:3}), (\ref{eq:in})
(see Theorem \ref{eq:nl1} $(\io)$).
\end{lemma}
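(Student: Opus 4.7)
The plan is to reduce the ``only if'' direction to uniqueness of the $L^2$-generalized solution, so that the real content lies in showing that, for $\varphi\in D$, the continuous solution $\tilde u$ to (\ref{eq:vg}), (\ref{eq:3}), (\ref{eq:in}) (whose existence and additional regularity $\tilde u\in C([s,t];H^1((0,1);\R^n))\cap C^1([s,t];L^2((0,1);\R^n))$ are provided by Lemma~\ref{cont_reg}) is itself an $L^2$-generalized solution in the sense of Definition~\ref{L2_g}. Once this is done, uniqueness of the $L^2$-generalized solution (Theorem~\ref{evol_g}) forces any $L^2$-generalized solution to coincide with $\tilde u$, yielding both implications simultaneously.

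To verify Definition~\ref{L2_g}, fix an arbitrary sequence $\varphi^l\in C_0^1([0,1];\R^n)$ with $\varphi^l\to\varphi$ in $L^2((0,1);\R^n)$. Since compact-support functions automatically satisfy the zero-order compatibility conditions, Theorem~\ref{km}$(\io)$ provides piecewise $C^1$ solutions $u^l$ to (\ref{eq:vg}), (\ref{eq:3}), (\ref{eq:in}) with $\varphi^l$ in place of $\varphi$. Both $\tilde u$ and each $u^l$ satisfy the characteristic integral representation (\ref{rep}); after subtracting and noting that the $f_j$ contributions cancel, the difference $w^l:=\tilde u-u^l$ solves a linear integral equation of the form
$$
w^l_j(x,t)=g^l_j(x,t)+(Kw^l)_j(x,t),
$$
where $K$ is a bounded linear operator on $C([s,s+T];L^2((0,1);\R^n))$ built from the $D$-term of (\ref{rep}) together with the boundary contribution $c_j(x_j(x,t),x,t)(Rw^l)_j(\om_j(x_j(x,t),x,t))$ active on $\{x_j(x,t)\in\{0,1\}\}$, and
$$
g^l_j(x,t)=c_j(x_j(x,t),x,t)\bigl(\varphi_j-\varphi_j^l\bigr)(x_j(x,t))
$$
on the subdomain where $x_j(x,t)\in(0,1)$, with $g^l_j\equiv 0$ elsewhere. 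A Gronwall/iteration argument along characteristics, analogous to those used in the proof of Theorem~\ref{robust}, then yields $\|w^l\|_{C([s,s+T];L^2)}\le C_T\|g^l\|_{C([s,s+T];L^2)}$ for every $T>0$.

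The main obstacle is showing $\|g^l\|_{C([s,s+T];L^2)}\to 0$ starting only from $L^2$-convergence $\varphi^l\to\varphi$. For this, for each fixed $t\in[s,s+T]$ and on the region where $x_j(x,t)\in(0,1)$, I perform the change of variables $x\mapsto y=x_j(x,t)$ inside the $L^2$-norm in $x$. The lower bound $|a_j|\ge\Lambda_0$ from \reff{eq:h11} together with the $BC^1$-regularity of $a$ ensures that the Jacobian $|\partial y/\partial x|$ is bounded above and below by positive constants, uniformly in $t\in[s,s+T]$; combined with the uniform bound on $c_j$, the linear map $\psi\mapsto g^l_j(\cdot,t)$ is bounded $L^2(0,1)\to L^2(0,1)$ with norm uniform in $t$. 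Hence $\|g^l(\cdot,t)\|_{L^2}\le C\|\varphi-\varphi^l\|_{L^2}\to 0$ uniformly in $t\in[s,s+T]$, which completes the argument.
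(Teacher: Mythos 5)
Your high-level plan is sound and parallels the paper's: show that the continuous solution is the $L^2$-generalized solution, then invoke uniqueness of the latter to close both directions. Your estimate of $g^l$ via the change of variables $x\mapsto x_j(x,t)$ is also correct, and the Jacobian bound is the right reason it works.

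The gap is in the step from $w^l=g^l+Kw^l$ to $\|w^l\|_{C([s,s+T];L^2)}\le C_T\|g^l\|_{C([s,s+T];L^2)}$. You claim that $K$ is bounded on $C([s,s+T];L^2((0,1);\R^n))$, but its reflection part carries the pointwise traces $(Rw^l)_j(\cdot)=\sum_kp_{jk}w^l_k(0,\cdot)$ or $\sum_kp_{jk}w^l_k(1,\cdot)$, and a boundary trace is not controlled by the spatial $L^2$-norm of $w^l(\cdot,t)$. A naive Gronwall in $L^2$ therefore does not close. Invoking ``an argument analogous to the proof of Theorem~\ref{robust}'' does not repair this either: that proof kills the reflection terms by iterating and using $(SR)^n\equiv 0$, i.e.\ $\bf(H3)^\prime$, which is \emph{not} among the hypotheses of Lemma~\ref{L2-cont} (Section~4 is formulated without $\bf(H3)$). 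So the key estimate on $\|w^l\|_{C([s,s+T];L^2)}$ is unproved. To close it at the $L^2$ level you would need either the $L^2$-in-time trace estimate coming from the energy identity (the same Green-formula computation as (\ref{god04})), or a finite-propagation-speed time-stepping argument — but neither is written down.

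The paper sidesteps this entirely. It does not verify Definition~\ref{L2_g} for arbitrary $C_0^1$ sequences; instead it takes the special sequence $\varphi^l\in C^1([0,1];\R^n)$ of Lemma~\ref{cont_reg} converging to $\varphi$ in $H^1$, and feeds it into the $H^1$-energy estimate of Lemma~\ref{H1} (where the boundary-trace terms are controlled by the dissipativity argument). This gives the stronger convergence (\ref{1k}), hence uniform convergence of $u^l$ in $C([0,1]\times[s,t];\R^n)$; passing to the limit in the integral system (\ref{rep}) identifies the limit as the continuous solution, and uniqueness of the $L^2$-generalized solution does the rest. The $H^1$ route is what lets the trace terms be absorbed. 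Your version, working only with $L^2$ data, is missing exactly this ingredient.
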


\begin{proof}
{\it Necessity.} Notice first that similarly to the proof of Lemma \ref{cont_reg} there is a sequence 
$\vphi^l\in C^1([0,1];\R^n)$ approaching $\varphi$ in $H^1\left((0,1);\R^n\right))$. We 
 use the convergence \reff{1k}
for the piecewise continuously differentiable
 solution $u^l$ to the problem  (\ref{eq:vg}), (\ref{eq:3}), (\ref{eq:in}) with $\vphi^l$
in place of $\vphi$. It follows that, given $s<t$, $u^l$ converges in
$C\left([0,1]\times[s,t];\R^n\right)$. This means that the $L^2$-generalized solution
has, in fact, better regularity.

Now, since $u^l$ for each $l\in\N$ is a continuous solution, it satisfies
the system
$$
u_j^l(x,t)=(Qu^l)_j(x,t)
-\int_{x_j(x,t)}^x d_j(\xi,x,t)
\left(\sum_{k\not=j}^n b_{jk}(\xi,\om_j(\xi))
u_k^l(\xi,\om_j(\xi))-f(\xi,\om_j(\xi))\right)\,d\xi
$$
for all $j\le n$, where the operator $Q$ is given by \reff{Q}.
 Letting $l\to\infty$
finishes the proof of the necessity.

{\it Sufficiency.} We use the fact that the problem (\ref{eq:vg}), (\ref{eq:3}), (\ref{eq:in}),
according to Theorem \ref{evol_g}, has a unique $L^2$-generalized solution $\tilde u$.
By uniqueness, $\tilde u$ is the limit of $u^l$ in the sense of
Definition \ref{L2_g}. Due to \reff{1k}, $\tilde u$ is a continuous function that coincides with~$u$.
\end{proof}

\begin{lemma}\label{cont-distr}
Let $\varphi\in D$. A continuous function $u$ is
the continuous solution
to the problem  (\ref{eq:vg}), (\ref{eq:3}), (\ref{eq:in}) if and only if
$u$ satisfies (\ref{eq:vg}) in a distributional sense and (\ref{eq:3}) and (\ref{eq:in}) pointwise.
\end{lemma}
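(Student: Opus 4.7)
The lemma asserts the equivalence of the integral-representation formulation (A), given by (\ref{rep}), with the distributional-plus-pointwise formulation (B). The two directions are handled separately.

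\emph{Necessity} $(A \Rightarrow B)$: The pointwise conditions follow by direct evaluation of (\ref{rep}). Setting $t = s$ gives $x_j(x, s) = x \in [0, 1]$, so the second branch of $Q$ in (\ref{Q}) applies with $c_j(x, x, s) = 1$ and the integral collapses, recovering (\ref{eq:in}). Setting $x = x_j$ (for $t$ large enough that $x_j(x_j, t) \in \{0, 1\}$), the first branch of $Q$ applies and the integral again collapses, yielding $u_j(x_j, t) = (Ru)_j(t)$. For the distributional PDE, I approximate $\vphi \in D$ in $H^1((0, 1);\R^n)$ by smooth compatible $\vphi^l$ (the construction is in the proof of Lemma \ref{cont_reg}); by Theorem \ref{km}$(\io\io)$ the associated classical solutions $u^l$ satisfy both (\ref{rep}) and the PDE pointwise, and the convergence \reff{1k} combined with the one-dimensional Sobolev embedding $H^1([0,1]) \hookrightarrow C([0, 1])$ yields $u^l \to u$ uniformly on each $\overline{\Pi_s^T}$. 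Passing to the distributional limit in $\partial_t u^l + a\partial_x u^l + b u^l = f$ produces the distributional PDE for $u$.

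\emph{Sufficiency} $(B \Rightarrow A)$: I invoke uniqueness. By Theorem \ref{evol_g} together with Lemma \ref{L2-cont}, for the given data there exists a unique continuous solution $\tilde u$ satisfying (\ref{rep}); by the necessity direction, $\tilde u$ also satisfies (B). Consequently $w := u - \tilde u \in C(\overline{\Pi_s}; \R^n)$ vanishes at $t = s$, satisfies the homogeneous version of (\ref{eq:3}) pointwise, and solves $\partial_t w + a\partial_x w + b w = 0$ distributionally; it remains to show $w \equiv 0$. Extend $w$ continuously by zero for $t < s$ (this is continuous since $w(\cdot, s) \equiv 0$) and mollify in $t$: $w^\delta(x, t) := (\rho_\delta *_t w)(x, t)$. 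Then $w^\delta$ is $C^\infty$ in $t$, converges to $w$ uniformly on each $[0,1]\times[s, T]$, and still satisfies the homogeneous boundary conditions pointwise (since $R$ acts only in $x$ and commutes with $t$-mollification). A Friedrichs commutator argument, valid because $a, b \in BC^1(\overline\Pi)$ and $w$ is bounded on each compact cylinder, gives $\partial_t w^\delta + a \partial_x w^\delta + b w^\delta = r^\delta$ in $\D'$, with $\|r^\delta\|_{L^1([0,1]\times[s, T])} \to 0$. Smoothness of $w^\delta$ in $t$ permits integration along the $j$-th characteristic, producing for $w^\delta$ an identity of the form (\ref{rep}) with source $r^\delta$ and boundary value $w^\delta(x_j, \omega_j(x_j)) \to 0$ (by the homogeneous BC/IC). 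Sending $\delta \to 0$ yields the homogeneous (\ref{rep}) for $w$; iterating this identity $n$ times through the $(SR)$-cascade and invoking ${\bf(H3)^\prime}$ (Lemma \ref{equiv}) forces $w \equiv 0$ on $\overline{\Pi_s^{s+d}}$, and a standard continuation argument, re-initialized on each successive slab $[s + kd, s + (k+1)d]$, extends $w \equiv 0$ to all of $\overline{\Pi_s}$.

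The main obstacle is the Friedrichs commutator step: the distributional derivative $\partial_x w$ is \emph{a priori} only a distribution while $a$ depends on both $x$ and $t$, so the $t$-mollified equation acquires the commutator $r^\delta = (a\partial_x w) *_t \rho_\delta - a \, \partial_x w^\delta$ that cannot be dismissed trivially. Its $L^1_{\rm loc}$-convergence to zero is handled by the classical Friedrichs argument: write $r^\delta$ in duality against a test function, use the $C^1$-smoothness of $a$ in $t$ to estimate the difference quotient $|a(x, t) - a(x, \tau)|/|t - \tau|$ uniformly, and exploit boundedness of $w$ on compacts to bound the result in $L^1$. A secondary point is that $w^\delta$ need no longer satisfy the nonlocal reflection condition exactly when $\delta > 0$, but the required pointwise identification of the limiting boundary trace $w^\delta(x_j, \omega_j(x_j))$ with the (homogeneous) value of $(Rw)_j$ or the zero initial datum follows directly from uniform convergence $w^\delta \to w$.
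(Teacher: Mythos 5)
Your necessity direction is correct and takes a route different from the paper: you approximate $\vphi$ in $H^1$ (via the construction in Lemma~\ref{cont_reg}), pass to classical/piecewise-smooth solutions $u^l$, use the convergence \reff{1k} and the embedding $H^1\hookrightarrow C$, and take the distributional limit. The paper instead computes the distributional derivative of $u$ directly from the integral representation \reff{rep} via the identity \reff{2k}, without any approximation sequence. Both work; the paper's is more self-contained since it does not route through Lemma~\ref{L2-cont}.

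Your sufficiency direction, however, has a genuine gap, and the gap sits exactly in the step you flag as "the main obstacle." You claim that the Friedrichs commutator $r^\delta = \rho_\delta *_t(a\partial_x w) - a\partial_x(\rho_\delta *_t w)$ is in $L^1$ with norm tending to zero, and you propose to get this by estimating the difference quotient $|a(x,t)-a(x,\tau)|/|t-\tau|$. This does not work as stated, because the classical Friedrichs/DiPerna--Lions lemma requires the function acted upon (here $\partial_x w$) to be a locally integrable function, whereas $\partial_x w$ is a priori only a distribution; and because the mollification is in $t$ while the differentiation is in $x$, the "difference quotient" bound alone produces no smallness. Concretely, after rewriting $a\partial_x w = \partial_x(aw)-(\partial_x a)w$ one finds $r^\delta = \partial_x\chi^\delta + \text{(uniformly small)}$ with $\chi^\delta = \rho_\delta *_t(aw)-a(\rho_\delta*_t w)$; the term $\partial_x\chi^\delta$ again involves $\partial_x w$, and showing that it is a function tending to zero requires substituting the PDE $a\partial_x w = -\partial_t w - bw$, integrating by parts in $\tau$ against $\rho_\delta$, and exploiting a non-obvious cancellation between a $\rho_\delta'$-term and a $\rho_\delta\partial_\tau a$-term (each converging to $\pm\,\partial_t a\cdot a^{-1}w\neq 0$). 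None of this is visible in your argument, and without it the claim $\|r^\delta\|_{L^1}\to 0$ is unjustified. Moreover, even with the commutator estimate in hand, "integration along characteristics" for the $t$-mollified $w^\delta$ (which is still only continuous in $x$) requires exactly the same distributional-constancy input as for $w$ itself. The paper avoids all of this: it rewrites \reff{eq:vg} in the form \reff{eq:vgc} using \reff{2k} and \reff{10k}, observes that the quantity \reff{11k} has vanishing distributional directional derivative, and invokes the constancy theorem of distribution theory to conclude directly that $u$ satisfies \reff{rep}. That argument is both shorter and free of the regularization difficulty you introduce. A minor additional remark: in the final step, uniqueness of the homogeneous integral equation $w=Qw+Dw$ with zero initial data follows from the Volterra/Gronwall structure of $D$ on short time intervals together with continuation, without any appeal to ${\bf(H3)^\prime}$; invoking the $(SR)$-cascade there is correct but not needed.
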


\begin{proof}
{\it Necessity.} Let $u$ be the continuous solution
to the problem  (\ref{eq:vg}), (\ref{eq:3}), (\ref{eq:in}).
It is straightforward to check that $u$ fulfills (\ref{eq:3}) and (\ref{eq:in}).
It remains to  show that $u$ satisfies (\ref{eq:vg}) in a distributional sense.
 Fix arbitrary $j\le n$ and  $s<t$ and take an arbitrary sequence $u^l\in C^{1}\left([0,1]\times[s,t];\R^n\right)$ approaching
$u$ in $C\left([0,1]\times[s,t];\R^n\right)$. Then for any smooth function
$\phi: (0,1)\times(s,t)\to\R$
with compact support we have
\begin{eqnarray*}
& & \langle (\d_t+a_j\d_x)u_j,\phi\rangle = \langle u_j,-\d_t\phi-\d_x(a_j\phi)\rangle
= \lim_{l\to\infty}\langle u_j^l,-\d_t\phi-\d_x(a_j\phi)\rangle =
\lim_{l\to\infty}\Bigl\langle 
(Qu^l)_j(x,t) \\
& & -\int_{x_j(x,t)}^x d_j(\xi,x,t)
\sum_{k\not=j}^n \left( b_{jk}(\xi,\om_j(\xi))
u_k^l(\xi,\om_j(\xi)) - f_j(\xi, \omega_j(\xi))\right) d\xi,
-\d_t\phi-\d_x(a_j\phi)\Bigl\rangle \\ 
& & = \lim_{l\to\infty}\Bigl\langle -\sum_{k=1}^n b_{jk}(x,t)u_k^l + f_j(x,t),\phi\Bigl\rangle
 =\Bigl\langle -\sum_{k=1}^n b_{jk}(x,t)u_k + f_j(x,t),\phi\Bigl\rangle
\end{eqnarray*}
as desired.
Here we used the formula
\beq\label{2k}
(\d_t+a_j(x,t)\d_x)\psi(\om_j(\xi,x,t))=0
\ee
being true for all $j\le n$, $\xi,x\in[0,1]$, $t\ge s$, and for any $\psi\in C^1(\R)$.

{\it Sufficiency.} Assume that a continuous vector-function $u$
satisfies (\ref{eq:vg}) in a distributional sense and (\ref{eq:3}) and (\ref{eq:in}) pointwise. Note the identity
\beq\label{10k}
(\d_t+a_j(x,t)\d_x)x_j(x,t)=0.
\ee
In the domain $\{(x,t)\in\Pi_s\,:\,t>\om_j(x_j(x,t),x,t)\}$ it is obvious.
In the domain
$\{(x,t)\in\Pi_s\,:\,t<\om_j(x_j(x,t),x,t)\}$ this identity easily follows from  the identity
$\om_j(x_j(x,t),x,t)=s$, after applying the operator $\d_t+a_j(x,t)\d_x$
to both sides and using the equation \reff{2k}.

On the account of \reff{2k} and \reff{10k},
we rewrite the system (\ref{eq:vg}) in the form
\begin{equation}\label{eq:vgc}
(\partial_t  + a_j(x,t)\partial_x)\left(c_j^{-1}(x_j(x,t),x,t) u_j\right)=
c_j^{-1}(x_j(x,t),x,t)
\left(-\sum\limits_{k\ne j}b_{jk}(x,t) u_k + f_j(x,t)\right),
\end{equation}
without destroying the equalities in the sense of distributions.
To prove that $u$ satisfies \reff{rep}
pointwise, we use  the constancy theorem of distribution theory claiming
that any distribution on an open set with zero generalized derivatives
is a constant on any connected component of the set.
Hence, due to \reff{eq:vgc}, for each
$j\le n$ the expression
\begin{eqnarray}\label{11k}
c_j^{-1}(x_j(x,t),x,t)\left(u_j(x,t)+\int_{x_j(x,t)}^x d_j(\xi,x,t)\Bigl[\sum\limits_{k\ne j}b_{jk}u_k - f_j\Bigl](\xi,\om_j(\xi))\,d\xi\right)
\end{eqnarray}
is a constant along the characteristic curve $\om_j(\xi,x,t)$. In other words, the distributional directional derivative
$(\partial_t  + a_j(x,t)\partial_x)$ of the function \reff{11k} is equal to zero. Since \reff{11k} is a  continuous function,
$c_j(x_j(x,t),x_j(x,t),t)=1$,
 and the trace
$u_j(x_j(x,t),t)$ is given by means of (\ref{eq:3}) and (\ref{eq:in}), it follows that
 $u$ satisfies the system \reff{rep} pointwise, as desired.
 \end{proof}

\begin{proofthm}{\it 4.1.} 
Given $\vphi\in D$,
let $u$ be the $L^2$-generalized solution 
to  the problem (\ref{eq:vg}), (\ref{eq:3}), (\ref{eq:in}).
Due to Lemmas \ref{L2-cont} and \ref{cont-distr}, this solution
satisfies (\ref{eq:vg}) in a distributional sense and (\ref{eq:3}) and (\ref{eq:in}) pointwise. 
By Lemma \ref{cont_reg}, the  distributional derivatives $\d_x u$  and $\d_t u$ 
belong in fact to $C([s,t],L^2\left((0,1);\R^n\right))$.
Consequently, $u(t)$ is a classical solution to the abstract problem \reff{unperturb}.

The converse follows from the uniqueness of the classical solution to the abstract problem~\reff{unperturb}.
\end{proofthm}

\section{Proof of the main Theorem \ref{main}}\label{sec:main}

\subsection{Bounded Solutions}
\label{sec:bounded}

Here we prove Theorem \ref{main} $(\io)$.
Suppose that the unperturbed linear system (\ref{eq:1u}), (\ref{eq:3})
is exponentially dichotomous with an exponent $\alpha > 0$, a
bound $M \ge 1$, and with the dichotomy projectors $P(t)$, $t\in\R$.
By Theorem~\ref{robust}, there exist $\varepsilon_0 > 0$,  $\alpha_1 \le \alpha$, and  $M_1 \ge M$
such that for all $\tilde a$ and $\tilde b$ with
$\|\tilde a\|_{BC^1(\Pi;\M_n)} \le \varepsilon_0$
and $\|\tilde b\|_{BC(\Pi;\M_n)} \le \varepsilon_0$ the perturbed system (\ref{eq:1p}), (\ref{eq:3}) is exponentially
dichotomous with the exponent $\alpha_1 $ and the bound $M_1 $.
Since the functions $A$ and $B$ are $C^2$-smooth,
there exists positive $\delta\le\de_0$ such that
\begin{eqnarray*}
&&\sup\left\{\left|A_j(x,t,\varphi(x,t)) - A_j(x,t,0)\right|\,:\, (x,t)\in \overline\Pi
\right\}  \\ [2mm]
&&
\quad+\sup\left\{\left|\frac{d}{dx}\left[A_j(x,t,\varphi(x,t)) - A_j(x,t,0)\right]\right|\,:\, (x,t)\in \overline\Pi
\right\}  \\ [2mm]
&&
\quad+\sup\left\{\left|\frac{d}{dt}\left[A_j(x,t,\varphi(x,t)) - A_j(x,t,0)\right]\right|\,:\, (x,t)\in \overline\Pi
\right\}\le \varepsilon_0,  \\ [2mm]
&&\sup\left\{|B_{jk}(x,t,\vphi(x,t)) - B_{jk}(x,t,0)|\,:\, (x,t)\in \overline\Pi\right\} \le \varepsilon_0.
\end{eqnarray*}
for all $\varphi \in BC^1(\overline \Pi, \mathbb R^n)$ with $\|\varphi\|_{BC^1(\Pi, \mathbb R^n)} \le \delta $
and $ 1\le j,k\le n$.
Then, given $\vphi\in BC^1(\overline\Pi;\R^n)$, the system
$$\partial_t u + A(x,t,\varphi)\partial_x u + B(x,t, \varphi)u = 0$$
with boundary conditions (\ref{eq:3}) has the exponential dichotomy with the constants $\alpha_1$ and $M_1$
whenever $\|\varphi\|_{BC^1(\Pi;\R^n)} \le \delta.$

The proof will be based on the following iteration procedure.
Put $u^0(x,t) \equiv 0$. We will obtain the iteration  $u^{k+1}(x,t)$ as the unique
$BC^2(\overline\Pi;\R^n)$-smooth bounded
solution  to the linear system
\begin{eqnarray} \label{th-2}
 \partial_t u + a^k(x,t)\partial_x u  + b^k(x,t)u = f(x,t),\quad k=0,1,2,\dots,
\end{eqnarray}
with the boundary conditions (\ref{eq:3}). Here $a^k(x,t) = A(x,t,u^k(x,t))$ \ and \ $b^k(x,t) = \\ B(x,t,u^k(x,t))$.

We divide the proof into three claims.

{\it Claim 1. 
Suppose that
\begin{eqnarray} \label{sm7aa}
\|f\|_{BC^2(\Pi;\R^n)} \le \frac{1}{L}\left(\frac{2M_1}{\alpha_1} + 1\right)^{-1} \delta,
\end{eqnarray}
where the constant $L$ is defined in Theorem \ref{lem:2d}.
Then there exists  a sequence $u^k$ of $C^2$-solutions to
\reff{th-2}, (\ref{eq:3}) such that
$\|u^k\|_{BC^2(\Pi;\R^n)} \le \delta$ for all $k$.
}

The proof will be done using induction in $k$.
To treat the base case $k=0$, let us
   construct $u^1(x,t)$.    Consider \reff{th-2}, (\ref{eq:3}) for $k=0$ and switch
to the abstract problem setting. Recall that the equivalence of both settings is proved in Section~\ref{sec:equiv}.
Since $A(x,t,0)=a(x,t)$ and  $B(x,t,0)=b(x,t)$, the homogeneous system
\reff{th-2}, (\ref{eq:3})  (or, the same, its abstract version
(\ref{unperturb}) with $f=0$)
is dichotomous by the assumption. This implies  (see \cite{Latn}) that the nonhomogeneous system  (\ref{unperturb})
has a unique bounded $L^2$-generalized solution
$u^{1}(t)$ given by
\begin{eqnarray}  \label{th-3}
u^{1}(t) = \int_{-\infty}^{\infty} G_0(t,s) f(s)ds,
\end{eqnarray}
where $U_0(t,s)=U(x,t)$ is the evolution operator  generated by  the linear system (\ref{eq:1u}), (\ref{eq:3}) and
$$G_0(t,s) = \left\{
               \begin{array}{l}
                 U_0(t,s) P(s), \ t \ge s, \\
                 U_0(t,s)(I - P(s)), \ t < s \\
               \end{array}
             \right.
$$
is the corresponding Green function  satisfying the inequality
$$\|G_0(t,s)\|_{\mathcal{L}(L^2((0,1);\R^n))} \le M e^{-\alpha|t-s|}, \ t,s \in \mathbb{R}.$$
Moreover,  we have
\begin{eqnarray} \label{th-33}
\| u^{1}(t)\|_{L^2((0,1);\R^n)} &\le& \int_{-\infty}^{\infty} \| G_0(t,s)\|_{\mathcal L(L^2((0,1);\R^n))} \|f(s)\|_{L^2((0,1);\R^n)} ds
\nonumber\\
&
\le &\frac{2 M}{\alpha}\|f\|_{BC(\Pi;\R^n)} \le \frac{2 M_1}{\alpha_1}\|f\|_{BC^2(\Pi:\R^n)}.
\end{eqnarray}

Let us show that  $u^1$ actually  has $C^2$-regularity.
With this aim, let us rewrite $u^{1}(t)$  in the form (see \cite[p. 228]{H})
\begin{eqnarray} \label{sm7}
 u^{1}(t) = U_0(t, t_0)u^1(t_0) +
 \int_{t_0}^t U_0(t,s)f(s)ds, \quad t \ge t_0.
\end{eqnarray}
Given an arbitrary $t_0\in\R$, the function $U_0(t,t_0)u^1(t_0)$ is an $L^2$-generalized solution to
the equation  (\ref{unperturb}) with $f=0$ (or, the same, to the system (\ref{eq:1u}), (\ref{eq:3}))
with the initial value $u^1(t_0).$ By Theorem \ref{lem:2d}, the function $[U_0(t,t_0)u^1(t_0)](x)$ has a $C^2$-regularity
for $t\ge t_0+3d, x \in [0,1]$.
Since the map $f:\mathbb{R} \to L^2((0,1);\R^n)$ is  differentiable,  the second summand
in \reff{sm7}, denoted by $w(t)$, is a classical solution to the abstract
equation (\ref{unperturb}) subjected to the  initial condition
 $w(t_0) = 0$
(see, e.g. \cite[p. 147]{Paz}, \cite[p. 197]{Krein}).
Due to Theorem \ref{distr-clas}, the function $w(t)$ is a classical solution of (\ref{unperturb}) if and only if it is an
$L^2$-generalized solution to the problem (\ref{eq:vg}), (\ref{eq:3}). By Theorem \ref{lem:2d}, the function $[w(t)](x)$
has a $C^2$-regularity for $t\ge t_0+3d, x \in [0,1]$.

As $t_0\in\R$ is arbitrary,
$u^{1}(x,t)$ has 
$C^2$-regularity in the whole domain $\overline\Pi$.
 Due to the inequalities \reff{apr_vv} and (\ref{th-33}),
it satisfies the following smoothing estimate:
\begin{eqnarray*} 
\| u^{1}(\cdot,t)\|_{C^2([0,1];\R^n)}& \le& L \left( \|u^1(t-3d)\|_{L^2((0,1);\R^n)} + \|f\|_{BC^2(\Pi;\R^n)}\right)
\\ &\le&
 L \left( \frac{2M_1}{\alpha_1} + 1\right) \|f\|_{BC^2(\Pi;\R^n)}.
\end{eqnarray*}
If $f$  fulfills \reff{sm7aa}, then $\|u^1\|_{BC^2(\Pi;\R^n)} \le \delta$.
As a consequence,  the linear system (\ref{th-2}), (\ref{eq:3})
for $k=1$
is exponentially dichotomous,  with the same constants $\al_1$ and $M_1.$

Assuming that Claim 1 is true for some $k\ge 1$, let us prove it for $k+1$.
Suppose that
$u^k$ is found such that
$\|u^k\|_{BC^2(\Pi;\R^n)} \le \delta.$
Then the homogeneous system (\ref{th-2})
(\ref{eq:3}) has an exponential dichotomy with the  same constants $\alpha_1$ and $M_1$.
Consider
$$u^{k+1}(t) = \int_{-\infty}^{\infty} G_k(t,s) f(s)ds,$$
where 
$$G_k(t,s) = \left\{
               \begin{array}{l}
                 U_k(t,s) P_k(s), \ t \ge s, \\
                 U_k(t,s)(I - P_k(s)), \ t < s, \\
               \end{array}
             \right.
$$
$U_k(t,s)$ is the evolution operator  generated by
the linear homogeneous system (\ref{th-2}), (\ref{eq:3}),  and $P_k$  and $I-P_k$
are the corresponding dichotomy projectors.
The  Green function  $G_k(t,s)$ satisfies the inequality
$$\|G_k(t,s)\|_{\mathcal L(L^2((0,1);\R^n))} \le M_1 e^{-\alpha_1|t-s|}, \ t,s \in \mathbb{R}.$$

Similarly to the above, we see that $u^{k+1}$ is $C^2$ smooth.
Moreover, due to (\ref{sm7aa}),  the function $u^{k+1}$ fulfills the estimate
\begin{eqnarray} \label{sm7aaa}
\|u^{k+1}\|_{BC^2(\Pi;\R^n)} \le L \left( \frac{2M_1}{\alpha_1} + 1\right) \|f\|_{BC^2(\Pi;\R^n)} \le \delta.
\end{eqnarray}
\bigskip
Claim 1 is therewith proved.

{\it Claim 2. Let
$$
\eps=\min\left\{\de L^{-1}\left( \frac{2M_1}{\alpha_1} + 1\right)^{-1}, (L^2N)^{-1}\left( \frac{2M_1}{\alpha_1} + 1\right)^{-2}
\right\}.
$$
If $\|f\|_{BC^2(\Pi;\R^n)} \le\eps$, then the sequence $\{u^k\}$ converges in   $BC^1(\overline\Pi;\R^n).$}
The difference $w^{k+1} = u^{k+1} - u^k$ belongs to $BC^2(\overline\Pi;\R^n)$ and satisfies the  system
 \begin{eqnarray}
 & & \d_ tw^{k+1} + a^k(x,t) \partial_ x w^{k+1}+ b^k(x,t) w^{k+1} = f^k(x,t),
 \label{th-5}
 \end{eqnarray}
 with the boundary conditions (\ref{eq:3}), where
  \begin{eqnarray*}
  f^k(x,t)
  = - \left(b^k(x,t) -   b^{k-1}(x,t)\right)u^k(x,t)
 - \left(a^k(x,t) -   a^{k-1}(x,t)\right)\partial_ xu^k(x,t).
 \end{eqnarray*}
 The right-hand side of (\ref{th-5}) is $C^1$-smooth in $x$ and $t$ and  satisfies the estimate
$$
  \|f^{k}\|_{BC^1(\Pi;\R^n)} \le N_1 \|u^k\|_{BC^2(\Pi;\R^n)} \|w^k\|_{BC^1(\Pi;\R^n)},
 $$
where  the constant $N_1$ depends on $A(x,t,u^k)$ and $B(x,t,u^k)$ but not on $w^k$. Additionally,
since the estimate
 (\ref{sm7aaa}) is uniform in $k$, $N_1$ can be chosen common for all  $k \in \mathbb{Z}$.

 Analogously to (\ref{th-3}) and (\ref{th-33}), $w^{k+1}: \mathbb{R} \to L^2((0,1);\R^n)$ reads
\begin{eqnarray*}  
w^{k+1}(t) = \int_{-\infty}^{\infty} G_k(t,s) f^k(s)ds,
\end{eqnarray*}
 and satisfies the estimate
 \begin{eqnarray} \label{th-33w}
 \| w^{k+1}(t)\|_{L^2((0,1);\R^n)} & \le &  \int_{-\infty}^{\infty} \| G_k(t,s)\|_{\mathcal L(L^2((0,1);\R^n))} \|f^k\|_{BC^1(\Pi;\R^n)} ds \nonumber\\
 & \le &
\frac{2 M_1 N_1}{\alpha_1}\|u^k\|_{BC^2(\Pi;\R^n)}\|w^k\|_{BC^1(\Pi;\R^n)}.
\end{eqnarray}

Now,  consider $w^{k+1}(x,t)$ as a solution to the initial-boundary value problem (\ref{th-5}),
 (\ref{eq:3}) with the initial value $w^{k+1}(t-2d)$. Using Theorem \ref{lem:2d}
and the inequalities (\ref{sm7aaa}) and (\ref{th-33w}),  we get
\begin{eqnarray*}
 \| w^{k+1}(t)\|_{BC^1([0,1];\R^n)} & \le & L \left( \|w^{k+1}(t-2d)\|_{L^2((0,1);\R^n)} + \|f^k\|_{BC^1(\Pi;\R^n)}\right)  \nonumber\\[2mm]
 & \le & L N_1 \left( \frac{2M_1}{\alpha_1} + 1\right) \|u^k\|_{BC^2(\Pi;\R^n)} \|w^k\|_{BC^1(\Pi;\R^n)}  \nonumber\\[2mm]
 & \le & L^2 N_1 \left( \frac{2M_1}{\alpha_1} + 1\right)^2 \|f\|_{BC^2(\Pi;\R^n)} \|w^k\|_{BC^1(\Pi;\R^n)}.
\end{eqnarray*}
If
\begin{eqnarray} \label{un0}
 L^2 N_1 \left( \frac{2M_1}{\alpha_1} + 1\right)^2 \|f\|_{BC^2(\Pi;\R^n)} < 1
\end{eqnarray}
then the sequence $\{ w^k \}$ tends to zero in  $BC^1(\overline\Pi;\R^n).$ Consequently, if
$\|f\|_{BC^2(\Pi;\R^n)} \le\eps$, then
 $f$ fulfills the inequalities \reff{sm7aa} and \reff{un0}, which implies that
the sequence $u^k$ converges in $BC^1(\overline\Pi;\R^n)$ to some function $u^*\in BC^1(\overline\Pi;\R^n)$.
It is a simple matter to show that the  function $u^*$ is a classical solution to the problem (\ref{eq:1}), (\ref{eq:3})
and satisfies the following estimate:
\begin{eqnarray}
 \|u^*\|_{BC^1(\Pi;\R^n)} \le L \left( \frac{2M_1}{\alpha_1} + 1\right) \|f\|_{BC^2(\Pi;\R^n)} \le \delta.
\label{*1*}
\end{eqnarray}

{\it Claim 3.  If $\|f\|_{BC^2(\Pi;\R^n)} \le\eps$,
  then the classical solution
$u^*$ to the
problem (\ref{eq:1}), (\ref{eq:3}) satisfying the bound \reff{*1*} is unique.}
On the contrary, suppose  that $\tilde u$ is a solution to the problem (\ref{eq:1}), (\ref{eq:3})
different from $u^*$,
such that $ \|\tilde u\|_{BC^1(\Pi;\R^n)} \le \delta.$ Then the linear system
$$
 \partial_t u + \tilde a(x,t) \partial_x u + \tilde b(x,t) u = 0
$$
with the boundary conditions (\ref{eq:3}), where $\tilde a(x,t) = A(x,t,\tilde u(x,t)), \ \tilde b(x,t) = B(x,t,\tilde u(x,t)),$
is exponentially dichotomous with the same constants $\alpha_1$ and $M_1.$
Clearly, the difference $\tilde w^{k+1} = \tilde u - u^{k+1}$ satisfies the  system
$$
\partial_t u + \tilde a(x,t) \partial_x u + \tilde b(x,t) u = \tilde f^{k+1}(x,t)
$$
 with the boundary conditions (\ref{eq:3}), where
  \begin{eqnarray*}
  \tilde f^{k+1}(x,t)
  = \left(b^{k}(x,t) - \tilde b(x,t)\right)u^{k+1}(x,t) +
 \left(a^{k}(x,t) - \tilde a(x,t)\right)\partial_x u^{k+1}(x,t).
 \end{eqnarray*}
Similarly to the above, the function $\tilde f^{k+1}(x,t)$ is $C^1$-smooth in $x$ and $t$ and
satisfies estimate
$$
  \|\tilde f^{k+1}\|_{BC^1(\Pi;\R^n)} \le N_1 \|u^{k+1}\|_{BC^2(\Pi;\R^n)} \|\tilde w^k\|_{BC^1(\Pi;\R^n)}.
$$
Applying the same estimates as for $w^k$, we derive the bound
\begin{eqnarray*}
& & \|\tilde w^{k+1}(t)\|_{BC^1([0,1];\R^n)}   \le
 L^2 N_1 \left( \frac{2M_1}{\alpha_1} + 1\right)^2 \|f\|_{BC^2(\Pi;\R^n)} \|\tilde w^k\|_{BC^1(\Pi;\R^n)}.
\end{eqnarray*}
Combining it with  (\ref{un0}), we get the convergence $\|\tilde w^{k}(t)\|_{BC^1(\Pi;\R^n)} \to 0$
as $k \to \infty.$ Consequently, $\tilde u(x,t) = u^*(x,t)$, a contradiction.

\subsection{Almost Periodic Solutions}
Here we prove Theorem \ref{main} $(\io\io)$, 
the almost periodic case.

 Recall that a continuous function $f: \mathbb{R} \to X$ with values in a Banach space $X$
is called a Bohr almost periodic if for
every $\varepsilon > 0$ there exists a relatively dense set of $\varepsilon$-almost
periods of $f,$ i.e., for every $\varepsilon > 0$ there exists a positive number $l$
such that every interval of length $l$ contains a number $\tau$ such that
$$\| f(t + \tau) - f(t) \|_{X} < \varepsilon \ \ {\rm for \ all} \ \ t \in \mathbb{R}.$$

As shown in Section \ref{sec:bounded}, we are done if we show that, under the assumption that the coefficients $A(x,t,v)$,
$B(x,t,v)$, and $f(x,t)$ are almost periodic in $t$,
the constructed solution $u^*(x,t)$
 is almost periodic in $t$ as well.
We use the fact that the limit of a uniformly convergent sequence of almost periodic functions is almost periodic \cite{Cord}. 
 This means that it suffices to show that the approximating sequence $\{u^k\}$  is a sequence of
  almost periodic functions.

We will also use the fact that  if a function $w(x,t)$ has bounded and continuous partial derivatives up to the second order in
 $x \in [0,1]$ and in $t \in \mathbb R$ and  is Bohr almost periodic in $t$ uniformly with respect to $x$,
 then the partial derivatives $\partial_x w(x,t)$
 and $\partial_t w(x,t)$ are also Bohr almost periodic in $t$ uniformly in $x$. 

The almost periodicity of
$\partial_t w(x,t)$ follows from \cite[Theorem 1.16]{fink} and its proof.
To prove the almost periodicity of
$\partial_x w(x,t)$, we use a similar argument. 
Let $x\in [0,1/2]$. Consider the sequence of almost periodic  functions in $t$
$$w^k(x,t) = k\left[w\left(x+1/k ,t\right) - w(x,t)\right],\quad k\ge 2$$
and prove that it tends to $\partial_x w(x,t)$ as $k \to \infty$, uniformly in $t \in \mathbb R$ and
$x\in [0,1/2]$.
Indeed,
\begin{eqnarray*}
k\left[ w\left(x + 1/k, t\right) - w(x,t)\right] -  \partial_x w(x,t) =
k\int_0^{1/k}\left(  \partial_x w(x + \xi,t) - \partial_x w(x,t)\right) d\xi.
 \end{eqnarray*}
 Since $w\in BC^2(\overline\Pi;\R^n)$, then for any 
 $\varepsilon > 0$ there exists $k_0\in\N$ such that
 $$
\left|\partial_x w(x + \xi,t) - \partial_x w(x,t)\right| < \varepsilon
$$
for all $\xi\in[0,1/k_0]$,  uniformly in $x\in [0,1/2]$ and $t\in\R.$
This means that the sequence of almost periodic functions $\{w^k(x,t)\}$ tends to  $\partial_x w(x,t) $
uniformly in $x\in [0,1/2]$ and $t \in \mathbb R$. Hence, the function $\partial_x w(x,t)$
is almost periodic in $t$ uniformly in $x\in [0,1/2].$

For  $x\in [1/2,1]$ we proceed similarly considering the sequence
$$w^k(x,t) = k\left[w\left(x-1/k ,t\right) - w(x,t)\right],\quad k\ge 2.$$
Summarizing, this shows the  almost periodicity of
$\partial_x w(x,t)$.

Turning back to the almost periodicity of $u^*$,  first recall that $u^0\equiv 0$.
Assuming that the solution $u^k(x,t)$ to (\ref{th-2}) is almost periodic in $t$ uniformly in $x$, let us prove that $u^{k+1}(x,t)$ is almost periodic also.
Fix an arbitrary $\varepsilon > 0.$ Let $h$ be an $\varepsilon$-almost period of almost
periodic in $t$  functions $f(x,t),$ $a^k(x,t) = A(x,t,u^k(x,t)),$ $b^k(x,t) = B(x,t,u^k(x,t))$ as well as
their derivatives in $x$ and $t$.
Then the differences $\tilde a^k(x,t) = a^k(x,t + h) - a^k(x,t)$ and $\tilde b^k(x,t) = b^k(x,t + h) - b^k(x,t)$ satisfy
the inequalities
\begin{eqnarray} \label{tilde}
 \|\tilde a^k\|_{BC^1(\Pi;\M_n)} \le \varepsilon, \  \|\tilde b^k\|_{BC^1(\Pi;\M_n)} \le \varepsilon.
\end{eqnarray}
We are done if we prove that $h$ is an almost period of the function $u^{k+1}(x,t).$

It is known that the functions $u^{k+1}(x,t)$ and $u^{k+1}(x,t+h)$ are unique bounded  solutions, respectively,
 to the  
system
 (\ref{th-2}) with the boundary conditions (\ref{eq:3}) and to the  system
$$
 \partial_t u + a^k(x,t+h)\partial_x u  + b^k(x,t+h)u = f(x,t+h)
$$
with the boundary conditions (\ref{eq:3}).
The difference $z^{k+1}(x,t) = u^{k+1}(x,t) - u^{k+1}(x,t+h)$ satisfies the system
$$
\partial_t z^{k+1} + a^k(x,t) \partial_x z^{k+1} + b^k(x,t) z^{k+1} = g^k(x,t)
 $$
 subjected to (\ref{eq:3}), where
  \begin{eqnarray*}
  & & g^k(x,t)
  = - \left(b^k(x,t) -  b^k(x,t+h)\right)u^{k+1}(x,t+h) \\
 & & - \left(a^k(x,t) -   a^k(x,t+h)\right)\partial_x u^{k+1}(x,t+h)
 +  f(x,t) - f(x,t+h).
 \end{eqnarray*}
 The function $g^k(x,t)$ is $C^1$-smooth in $x$ and $t$ and, due to \reff{sm7aaa} and \reff{tilde},
  satisfies the bound
  \begin{eqnarray}
  \|g^{k}\|_{BC^1(\overline\Pi;\R^n)} \le \varepsilon(\delta+1).
  \label{th-6h}
 \end{eqnarray}
 Analogously to (\ref{th-3}) and (\ref{th-33}), $z^{k+1}: \mathbb{R} \to L^2((0,1);\R^n)$ is defined by the formula
\begin{eqnarray*}  
z^{k+1}(t) = \int_{-\infty}^{\infty} G_k(t,s) g^k(s)ds.
\end{eqnarray*}
 Moreover, the following  estimate is true:
 \begin{eqnarray} \label{th-33wh}
\| z^{k+1}(t)\|_{L^2((0,1);\R^n)} \le \int_{-\infty}^{\infty} \| G_k(t,s)\|_{\mathcal L(L^2((0,1);\R^n))} \|g^k\|_{BC^1(\Pi;\R^n)} ds \le
\frac{2 M_1}{\alpha_1}(\delta+1)\varepsilon,
\end{eqnarray}
the bound being uniform in $t\in\R$. We combine  the representation
 \begin{eqnarray*}  
 z^{k+1}(t) = U_k(t,t-2d)z^{k+1}(t - 2d) + \int_{t-2d}^t U_k(t,s) g^k(s) ds
  \end{eqnarray*}
with Lemma \ref{lem:2d} and the inequalities (\ref{th-6h}) and (\ref{th-33wh}). This yields
the desired estimate
\begin{eqnarray*}
& & \| z^{k+1}(t)\|_{C^1([0,1];\R^n)}  \le L \left( \|z^{k+1}(t-2d)\|_{L^2((0,1);\R^n)} + \|g^k\|_{BC^1(\Pi;\R^n)}\right) \le \nonumber\\[2mm]
& & \le \varepsilon L (\delta+1)\left( \frac{2M_1}{\alpha_1} + 1\right) = L_1 \varepsilon
\end{eqnarray*}
 or, the same,
  \begin{eqnarray*}
\|u^{k+1}(t) - u^{k+1}(t+h)\|_{C^1([0,1];\R^n)} \le  L_1\varepsilon,
 \end{eqnarray*}
the constant $L_1$ being independent of $\varepsilon$ and $t\in\R$. This finishes the proof of the almost periodicity
of $u^{k+1}(x,t).$

\subsection{Periodic Solutions}
If the coefficients $A(x,t,v)$,
$B(x,t,v)$, and $f(x,t)$ are $T$-periodic in $t$,
then each constructed iteration $u^k$ is in fact 
a unique solution 
to a linear dichotomous problem with $T$-periodic in $t$
coefficients. This yields the  $T$-periodicity in $t$ of $u^k$
and, hence the  $T$-periodicity in $t$ of the limit
function $u^*$.
 
The proof of Theorem \ref{main} is complete.
 
\section*{Acknowledgments}
 This work  was supported by the 
VolkswagenStiftung Project ``Modeling, Analysis, and Approximation Theory toward Applications in 
Tomography and Inverse Problems''.

\end{document}